\documentclass{article} 
\usepackage{fullpage}
\usepackage{amsmath,amsthm,amssymb}
\usepackage{todonotes}
\usepackage[utf8]{inputenc}

\usepackage{color, colortbl}
\definecolor{Gray}{gray}{0.9}

\newcommand{\nth}{^{\operatorname{th}}}

\newcommand{\mbf}[1]{{\bf #1}}

\title{A geometric lemma for complex polynomial curves with applications in Fourier restriction theory}
\author{Jaume de Dios Pont}

\newtheorem{thm}{Theorem}
\newtheorem{prop}{Proposition}
\newtheorem{lemma}{Lemma}
\newtheorem{defi}{Definition}
\newtheorem{example}{Example}
\newtheorem*{remark}{Remark}

\begin{document}
	\maketitle
	\abstract{
	The aim of this paper is to prove a uniform Fourier restriction estimate for certain $2-$dimensional surfaces in $\mathbb R^{2n}$. These surfaces are the image of complex polynomial curves $\gamma(z) = (p_1(z), \dots, p_n(z))$, equipped with the complex equivalent to the affine arclength measure. This result is a complex-polynomial counterpart to a previous result by Stovall \cite{stovall_uniform_2016} in the real setting.
	
	As a means to prove this theorem we provide an alternative proof of a geometric inequality by Dendrinos and Wright \cite{dendrinos_fourier_2010} that extends the result to complex polynomials.
	}

	\tableofcontents

	\newpage
	\section{Introduction} 
	\label{sec:introduction}

	Multiple results in harmonic analysis involving integrals of functions over manifolds (such as restriction theorems , convolution estimates, maximal function estimates) depend strongly on the non-vanishing of some curvature of the manifold. One of the most remarkable examples is the $L^p$-optimal restriction theorem for smooth curves with the arclength measure. Originally proven by Drury \cite{drury_restrictions_1985}, the result is only true for curves with non-vanishing torsion.
	
	There has been considerable interest \cite{pee_fourier_1974,drury1990degenerate,bak_restriction_2012,bak_restriction_2008,bruce_fourier_2019,dendrinos_fourier_2010,drury_fourier_1987,drury_fourier_1985} in proving analogous results in the degenerate case where the torsion vanishes at a finite number of points, by using the affine arc-length as an alternative measure. As a model case, multiple results have been proven in which the implicit constants do not depend on the polynomial curve, but only on its degree and the dimension of the ambient space. In the particular case of the restriction problem for curves, this culminated in 2016 with Stovall's paper \cite{stovall_uniform_2016} proving the optimal uniform result for restriction over real polynomial curves. 

	A key part of these developments has been a geometric lemma by Dendrinos and Wright \cite{dendrinos_fourier_2010} that provides very precise bounds to the torsion in the case of real polynomial curves. At an intuitive level, the geometric lemma states that a polynomial curve can be split into a bounded (depending only on the degree of the polynomial curve) number of open sets so that, in each open set the curve behaves like a \textit{model curve} that is easier to study. This paper extends the result to complex variables using compactness techniques.

	As a consequence this paper extends a theorem by Stovall to the complex case. We prove the restriction theorem  \cite[Theorem 1.1\textbf{}]{stovall_uniform_2016}, therein for complex polynomial curves $\gamma:\mathbb C \to \mathbb C^d$ under the isomorphism $\mathbb C \sim \mathbb R^2$ (Theorem \ref{thm:restriction}). We show, following the proof in \cite{stovall_uniform_2016} that the restriction is uniform over all complex curves, extending results by Bak-Ham in \cite{bak_restriction_2014}, who already provided the uniform restriction result for (complex) dimension $d=3$, and extended it to some particularly \textit{simple} curves in higher dimensions.

	\subsection{The affine measure on a complex curve} 
	\label{sub:the_affine_measure_on_a_complex_curve}

	In this article we will consider the extension/restriction operators with respect to the \emph{complex affine arclength measure}. Before we define the compplex affine arclength measure, let us define its real, more commonly used counterpart. For a real $C^d$ curve $\gamma:[0,1] \mapsto \mathbb R^d$ we define the real affine measure of $\gamma(t)$ as a weighted pushworward of the $dt$ measure:
	
		\begin{equation}\label{eq:def_real_affine_measure}
		d \lambda_\gamma := \frac 1 {d!}  \gamma_* \left(\det [\gamma'(z), \gamma''(z), \dots ,\gamma^{(d)}(z)]^{\frac{2}{d^2+d}} dt\right)
	\end{equation}
	equivalently by duality, for a function $g$ in $C_0(\mathbb R^d;\mathbb R)$,
	
	\begin{equation}
		\int_{\mathbb R^d} g(x) d \lambda_\gamma(x) := \int_0^1 g(\gamma(t)) \left(\det [\gamma'(z), \gamma''(z), \dots ,\gamma^{(d)}(z)]^{\frac{2}{d^2+d}}\right)  dt
	\end{equation}
	
	The potentially suitability of this measure (which vanishes at all the points where the torsion $\det [\gamma'(z), \gamma''(z), \dots ,\gamma^{(d)}(z)]$ vanishes) to control the potential singularities of a restriction estimate starts as early as in \cite{drury_fourier_1985}, and this is the measure used in the main theorem of \cite{stovall_uniform_2016}.

	Inspired by the real affine arclength measure, we define the affine arclength measure associated to a $d-$dimensional complex analytic curve $\gamma$ defined on an open set $D \subseteq \mathbb C$ (i.e $\gamma(z):D \to \mathbb C^d$) as the push-forward of the Lebesgue measure weighed by a power ($\frac 4 {d^2+d}$) of the torsion of the curve:

	\begin{equation}\label{eq:def_affine_measure}
		d \lambda_\gamma = \frac 1 {d!}  \gamma_* \left(\det [\gamma'(z), \gamma''(z), \dots ,\gamma^{(d)}(z)]^{\frac{4}{d^2+d}} |dz|\right)
	\end{equation}
	
	The canonical example is the case when the curve $\gamma(t):= (t,t^2, \dots, t^d)$ is a moment curve. In this case, $d \lambda_\gamma = \gamma_* dz$, because the determinant is equal to 1. In the real smooth setting ($\gamma:\mathbb R \to \mathbb R^d$), this measure has been considered in the literature \cite{drury_fourier_1985,bak_restriction_2014} for two of its properties that transfer readily to the complex case:

	\begin{itemize}
		\item The measure $\lambda$ is then covariant both under local re-parametrization of $z$ (i.e., if $\phi: D'\to D$ is a conformal map $\lambda_{\gamma} = \lambda_{\gamma \circ \phi}$) and affine maps applied on $\mathbb C^d$ (i.e. if $L \in GL(\mathbb C;d)$, then $d\lambda_{L\circ\gamma} = L_* d\lambda_\gamma$). The factor $4$ in the $\frac 4 {d^2+d}$ in the definition \eqref{eq:def_affine_measure} (in comparison with the factor of 2 in \eqref{eq:def_real_affine_measure}) is to preserve this fact.
		
		\item The measure $\lambda$ vanishes at the points where the torsion of $\gamma$ ($\det [\gamma', \gamma^{(2)}, \dots, \gamma^{(d)}]$) of $\gamma$ vanishes. The relevance of this property comes from the fact that the restriction theorem in the full range fails if one uses the the Hausdorff measure at neighbourhood of a point where the torsion of $\gamma$ vanishes.
	\end{itemize}

	Moreover, Bak and Ham \cite{bak_restriction_2014} show that this measure is optimal for the Fourier restriction problem, in the sense that any measure supported on the image of $\gamma$ for which Theorem \ref{thm:restriction} below holds in its full range of exponents must be absolutely continuous with respect to $d\lambda_\gamma$.

	To make things more notationally convenient in the following sections, we will not only consider the  affine measure, but a set of related differential forms on the domain $\mathbb C$ of $\gamma$  , for $0<k\le d$ define:

	\begin{equation}
		\Lambda^{(k)}_\gamma(z) := \gamma'(z)\wedge  \dots \wedge \gamma^{(k+1)}(z)
	\end{equation}
	\begin{equation}
		\Lambda_\gamma(z_1, \dots , z_k) := \gamma'(z_1)\wedge\dots\wedge \gamma'(z_k)
	\end{equation}
	Note that $\Lambda_\gamma$ is a function with variable arity (which will be clear by the context) that has an element of $\mathbb C^k$ as an input and returns a $k-$form as an output. We will denote the Vandermonde determinant by

	\begin{equation}
		v(z_1, \dots z_k) := \prod_{i<j} (z_i-z_j)
	\end{equation}
	and, to be consistent with the previous notation in \cite{stovall_uniform_2016}, we will define 

	\begin{equation}
		L_\gamma(z) := \frac 1 {d!}\Lambda^{(d)}_\gamma(z) = \frac 1 {d!} \det [\gamma'(z), \gamma''(z), \dots ,\gamma^{(d)}(z)]
	\end{equation}.
	
	, which leads to $\lambda_\gamma = L_{\gamma}^{4/{d^2+d}}$. In the case where the arity of $\Lambda$ is equal to $d$, we define $J_\gamma := \Lambda_\gamma$.
	
	\begin{example}[Differential forms associated to curves]
	\label{ex:differential_forms}
	Let $\mu(z) := (z,z^2, z^3)$ be the standard moment curve. Let $e_1, e_2, e_3$ be the canonical co-ordinate basis on the 1-forms $\Lambda^1(\mathbb C^3)$. Then
\begin{align}
        \Lambda^{(1)}_\mu(z) =& e_1 + 2 z e_2 + 3 z^2 e_3    \\
        \Lambda^{(2\textbf{})}_\mu(z) =& \nonumber
        \Lambda^{(1)}_\mu(z)  \wedge  (2 e_2 + 6 z e_3)
        \\=&
        2 e_1\wedge e_2 + 6 z^2 e_2 \wedge e_3 - 6z e_3 \wedge e_1 \\
        \Lambda^{(3)}_\mu(z) =&
        \Lambda^{(2)}_\mu(z)  \wedge  (6d w_3) =
        12 e_1 \wedge e_2 \wedge e_3,
\end{align} and

\begin{align}
    \Lambda_\mu(z) =& e_1 + 2 z e_2 + 3 z^2 e_3    \\
    \Lambda_\mu(z_1, z_2) =& \mu'(z_1)\wedge \mu'(z_2) \nonumber
    \\=&
    (z_2-z_1) (2 e_1 \wedge e_2 + 6  z_1 z_2 e_2 \wedge e_3 -3 (z_1+ z_2) e_3\wedge e_1)\\
    \Lambda_\mu(z_1, z_2, z_3) =& \det (\mu'(z_1), \mu'(z_2), \mu'(z_3)) e_1\wedge e_2 \wedge e_3 =  6 v(z_1, z_2, z_3) e_1\wedge e_2 \wedge e_3
\end{align}

Note that in the preceding example, the form $\Lambda_\gamma(z_1, z_2, z_3)$ (resp. $\Lambda_\gamma(z_1, z_2)$) is divisible by $v(z_1, z_2, z_3)$ (resp. $v(z_1,z_2)$). This is not by chance, if two points $z_i, z_j$ are the same then the vectors associated to them are aligned, and the associated form must vanish.

A more interesting example is the generalized moment curve. Let $\mbf n$ be an increasing multi-index of length $d$. Let $\gamma_{\mbf n}$ be the \emph{generalized moment curve of degree} $\mbf n$,  $\gamma_{\mbf n}:=(z^{\mbf n_1}, \dots ,z^{\mbf n_d})$. Define $\delta(\mbf n) := (\mbf n_1 -1, \mbf n_2 - \mbf n_1 -1, \mbf n_3 - \mbf n_2 - 1, \dots , \mbf n_k - \mbf n_{k-1}-1)$. Define $S_\delta$, the Schur polynomial of index $\delta$, to be the polynomial obeying the identity  $S_\delta(z_1, \dots z_k)v(z_1, \dots z_k) = \det[ (z_i^ {n_j})_{i,j =1, \dots n}]$.
For a given element $\lambda$ of the exterior algebra of order $k$, and a canonical basis element $e_{j_1}\wedge \dots \wedge e_{j_k}$, let denote $\lambda|_e$ the coefficient of $w$ in the canonical basis. Now it follows from the definitions that

\begin{equation}
    \Lambda_{\mu_{\mbf n}}(z_1, \dots, z_k) |_{e_{j_1} \wedge \dots \wedge e_{j_k}} =
    \det[ (z_i^ {n_{j_k}})_{i,k =1, \dots n}]
    =S_{\delta(n_{j_1}, \dots n_{j_k})}(z_1, \dots z_k) v(z_1, \dots z_k)
\end{equation}

As we shall see in section \ref{sub:model_case_generealized_moment_curve} for a general curve $\gamma$ we can recover estimates for $\Lambda^{(k)}(z)$ from estimates for $\Lambda(z_1, \dots z_k)$, using the equality $\Lambda^{(k)}(z) = c_{k} \left[\frac{\Lambda(z_1,\dots z_k)}{v(z_1, \dots z_k)}\right](z,\dots_k, z)$ with $c_{k}=\prod_{i=1}^{k-1} k!$. The fact that the form $\left[\frac{\Lambda(z_1,\dots z_k)}{v(z_1, \dots z_k)}\right]$ extends continuously to the zero set of $v(z_1, \dots , z_k)$ for general curves other than the generalized moment curve (\ref{lem:det_is_limit}) will become a core part of the proofs in section \ref{sub:fixed_polynomial_case}.

	\end{example}


	\subsection{Restriction Problem} 
	\label{sub:restriction_problems}
	
	The main Fourier restriction result of this article is inspired the following uniform restriction theorem:
	
	\begin{thm}	[{\cite[Theorem 1.1]{stovall_uniform_2016}}]	
		\label{thm:real_restriction}
		For each $N,d$ and $(p,q)$ satisfying:

		\begin{equation}
			p' = \frac{d(d+1)}{2} q, \;\; q> \frac{d^2+d+2}{d^2+d}
		\end{equation}
		there is a constant $C_{N,d,p}$ such that for all polynomials $\gamma: \mathbb R \to \mathbb R^d$ of degree up to $N$ we have:

		\begin{equation}
			\|\hat f\|_{L^q(d\lambda_\gamma)} \le C_{N,d,p} \|f\|_{L^p(dx)}
		\end{equation}
		for all Schwartz functions $f:\mathbb R^d \to \mathbb R$, where $d\lambda_\gamma$ is the real affine arclength measure defined in \eqref{eq:def_affine_measure}.
	\end{thm}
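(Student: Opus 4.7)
The plan is to follow the standard dualization and then attack the open range via a multilinear Christ--Drury style argument, with the Dendrinos--Wright geometric lemma supplying all of the uniformity in the polynomial $\gamma$. First I would reformulate the statement in terms of the extension operator $E_\gamma f(x) := \int f(t) e^{i x \cdot \gamma(t)} d\lambda_\gamma(t)$, so that the estimate becomes $\|E_\gamma f\|_{L^{p'}(\mathbb R^d)} \lesssim_{N,d,p} \|f\|_{L^{q'}(d\lambda_\gamma)}$. Since we are in the open range $q > \frac{d^2+d+2}{d^2+d}$, it suffices to establish a restricted weak-type endpoint estimate or a Lorentz-space estimate and then interpolate with a trivial bound, so I would work with characteristic functions $f = \mathbf 1_E$ throughout the main argument.

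Next, I would follow Christ's method of refinements: raising $E_\gamma \mathbf 1_E$ to the $d$-th power and applying Plancherel reduces the problem to an $L^1$ estimate for the pushforward of $d\lambda_\gamma^{\otimes d}$ along the sum map $\Phi(t_1,\dots,t_d) = \gamma(t_1) + \cdots + \gamma(t_d)$. The Jacobian of $\Phi$ factors, up to constants, as a Vandermonde $v(t_1,\dots,t_d)$ times a product of torsion evaluations $L_\gamma(t_i)$; after combining with the density $L_\gamma^{2/(d^2+d)}$ in each factor of $d\lambda_\gamma$, the integrand that controls the operator becomes an explicit rational expression in the $t_i$ and the quantities $L_\gamma(t_i)$. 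An iterated change of variables through $d$ offspring maps then reduces the estimate to a refinement-type inequality stating that, off a small exceptional set, the map $\Phi$ is sufficiently injective on the product of the $E_i$.

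The engine that makes all of this uniform is the Dendrinos--Wright geometric lemma: on each element of a partition of $\mathbb R$ into $O_{N,d}(1)$ pieces, the torsion $L_\gamma$ behaves, after an affine reparametrization, like a pure monomial, and the Vandermonde/torsion ratio appearing in the Jacobian satisfies sharp pointwise comparability with its model-curve analogue. Thus I would localize to one such piece, renormalize so $\gamma$ looks like a perturbation of a generalized moment curve, carry out the refinement argument in that model geometry (where the needed estimate is classical), and then sum the $O_{N,d}(1)$ pieces. Finally, I would pass from the restricted weak-type endpoint to the desired strong-type inequality on the open range by real interpolation against a trivial $L^1 \to L^\infty$ bound.

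The main obstacle, and the source of all the work, is controlling the refinement step uniformly in $\gamma$: one must show that the $d$-fold iterated offspring map is essentially injective with the right Jacobian bounds on a large subset, and the only way I see to get constants depending solely on $(N,d)$ is to exploit the geometric lemma to reduce to the monomial model on each piece. Everything else --- the dualization, the Plancherel step, and the final interpolation --- is routine once this uniform multilinear Jacobian estimate is in hand.
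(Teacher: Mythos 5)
Your proposal is correct in its opening moves (dualize, run a Drury--Christ offspring/refinement argument, invoke Dendrinos--Wright for uniformity), but it leaves out the step that is actually the crux of Stovall's proof --- and the place where the paper's complex version does the analogous work in Section \ref{sub:almost_orthogonality}. After the geometric lemma you are left with intervals $I_j$ on which $L_\gamma(t) \sim A_j |t-b_j|^{k_j}$. When $k_j > 0$ the torsion still degenerates at the boundary point $b_j$, so on $I_j$ you do \emph{not} yet have a nondegenerate model; you have a (perturbation of a) generalized moment curve with a genuine zero of the torsion. A single pass of the refinement argument does not control a measure that vanishes polynomially in one coordinate. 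The way the paper (following Stovall) handles this is in two layers: first prove a genuinely nondegenerate local estimate (Theorem \ref{thm:uniform_local_restriction}, where $0<C_1\le J_\gamma\le C_2$, via the offspring bootstrap), then decompose each $I_j$ dyadically into $T_{j,n}=\{|t-b_j|\sim 2^n\}$, rescale each annulus to the nondegenerate regime, and finally run an almost-orthogonality / Littlewood--Paley argument together with a multilinear off-scale gain of the form $\|\prod_{i=1}^d \mathcal E_\gamma[f_i]\|_{L^{d+1}}\lesssim 2^{\epsilon(n_d-n_1)}\prod_i\|f_i\|_{L^2}$ to sum the scales. Your phrase ``carry out the refinement argument in that model geometry (where the needed estimate is classical)'' is precisely where this dyadic-summing machinery is hiding: the known proofs for generalized moment curves with affine arclength \emph{are} dyadic decomposition plus almost-orthogonality, so you have not bypassed that step, only relabeled it.

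A secondary gap is the claimed reduction to a restricted weak-type endpoint estimate followed by interpolation against the trivial $(1,\infty)$ bound. Along the scaling line $p'=\tfrac{d(d+1)}{2}q$ the endpoint is $p_0=q_0=\tfrac{d^2+d+2}{d^2+d}$, and a uniform endpoint estimate for general polynomial curves is not available --- if it were, the open-range theorem would follow in one line and the entire dyadic/almost-orthogonality apparatus would be superfluous. Stovall's argument, and the paper's complex version, live entirely in the open range and never pass through the endpoint. So as written the proposal replaces the two genuinely hard steps (handling the residual degeneracy at $b_j$, and summing dyadic scales uniformly in $\gamma$) with appeals to results that are either unavailable or secretly contain the missing argument.

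One small bookkeeping note: Theorem \ref{thm:real_restriction} is quoted from Stovall rather than reproved here; the paper's own analytic work is the complex analogue, Theorem \ref{thm:restriction}, in Section \ref{sec:the_analysis}. But the real and complex proofs share exactly the structure above (local nondegenerate restriction, geometric decomposition, dyadic annuli, almost-orthogonality), and your proposal addresses only the first two pieces.
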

	We will prove the complex analogue of the theorem above, using the complex arclength measure, and with the same $p,q$ exponent range, that is:
    
	\begin{thm}
	\label{thm:restriction}
		For each $N,d$ and $(p,q)$ satisfying:

		\begin{equation}
			p' = \frac{d(d+1)}{2} q, \;\; q> \frac{d^2+d+2}{d^2+d}
		\end{equation}
		there is a constant $C_{N,d,p}$ such that for all polynomials $\gamma: \mathbb C \to \mathbb C^d$ of degree up to $N$ we have:

		\begin{equation}
			\|\hat f\|_{L^q(d\lambda_\gamma)} \le C_{N,d,p} \|f\|_{L^p(dx)}
		\end{equation}
		for all Schwartz functions $f:\mathbb C^d \to \mathbb C$, where the Fourier transform is the $\mathbb R^{2n}$-dimensional Fourier transform under the $\mathbb C^n \sim \mathbb R^{2n}$ isomorphism, and $d\lambda_\gamma$ is the complex affine arclength measure defined in \eqref{eq:def_affine_measure}.
	\end{thm}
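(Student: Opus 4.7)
The plan is to transplant Stovall's proof of Theorem \ref{thm:real_restriction} to the complex setting, using the complex geometric lemma announced in the abstract in place of the Dendrinos--Wright lemma. First I would dualize and work with the extension operator
\[
    E_\gamma f(x) \;:=\; \int_{\mathbb C} f(z)\, e^{2\pi i \langle x,\gamma(z)\rangle}\, L_\gamma(z)^{\frac{4}{d^2+d}}\, |dz|,
\]
where $\langle\cdot,\cdot\rangle$ is the real inner product under the identification $\mathbb C^d \simeq \mathbb R^{2d}$. The theorem is equivalent to the bound $\|E_\gamma f\|_{L^{p'}(dx)} \lesssim \|f\|_{L^{q'}(d\lambda_\gamma)}$, and by real interpolation and the openness of the range it is enough to obtain restricted-weak-type bounds of the form $\|E_\gamma \chi_F\|_{L^{p',\infty}} \lesssim \lambda_\gamma(F)^{1/q'}$ for arbitrary measurable $F \subset \mathbb C$.

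Next I would peel off the geometric input. The complex version of the Dendrinos--Wright lemma yields, uniformly in $\gamma$ of degree at most $N$, a decomposition of $\mathbb C$ (or rather of the support of $L_\gamma$) into $O_{N,d}(1)$ open pieces $D_j$ on each of which $\gamma$ is comparable, up to an affine map of $\mathbb C^d$ and a conformal reparametrization of the domain, to a generalized moment curve $\gamma_{\mbf n}$. By the covariance properties of $d\lambda_\gamma$ recorded earlier, both the measure and the operator $E_\gamma$ transform correctly under these reductions, so it suffices to prove a single uniform estimate on each $D_j$ after replacing $\gamma$ with the model $\gamma_{\mbf n}$ multiplied by a controlled Jacobian factor coming from the comparison. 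This is exactly the step where the new geometric lemma is doing the work: it guarantees that the pointwise ratios of $|\Lambda_\gamma(z_1,\dots,z_k)|$ and $|\Lambda_{\gamma_{\mbf n}}(z_1,\dots,z_k)|$, after the affine/conformal change of variables, are bounded above and below uniformly.

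For the model case I would follow Stovall's multilinear/offspring scheme, now over $\mathbb C$ rather than $\mathbb R$. The heart is a $k$-linear extension inequality controlled by the Jacobian $J_\gamma(z_1,\dots,z_d)$; by the Schur-polynomial identity from Example \ref{ex:differential_forms} this Jacobian factors as $v(z_1,\dots,z_d)$ times a quantity that stays bounded below on the model piece, so the $d$-linear inequality reduces to a Drury-type Vandermonde computation done over $\mathbb C$. A change of variables $x \mapsto \gamma_{\mbf n}(z_1)+\dots+\gamma_{\mbf n}(z_d)$ turns the $d$-linear extension into a convolution/Plancherel estimate whose Jacobian is exactly $|v(z_1,\dots,z_d)|^2$ (the square reflecting the factor $4/(d^2+d)$ in the measure). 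From this $d$-linear endpoint one recovers linear restricted-weak-type bounds near the Stovall line by the standard refinement/pigeonholing argument: localize $F$ to dyadic level sets of $|E_\gamma^* g|$ and use a greedy selection of $d$-tuples of pieces on which the Jacobian is nearly maximal.

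The main obstacle I anticipate is not the algebraic machinery, which transfers cleanly from $\mathbb R$ to $\mathbb C$, but the bookkeeping in the refinement step: in Stovall's real proof the pieces in the decomposition are intervals ordered by $\mathbb R$, so statements like ``$z_1<\dots<z_d$ forces $|v|$ to be large'' are tautologies. In the complex case the domain pieces $D_j$ are open subsets of $\mathbb C$ with no natural order, so I would replace the ordering with an appropriate combinatorial sorting of the $D_j$'s by their mutual Euclidean separation, quantified by a lower bound on $|v(z_1,\dots,z_d)|$ when the $z_i$ lie in pairwise-separated pieces. Once this replacement is in place, the rest of Stovall's interpolation-plus-endpoint-pigeonhole argument applies verbatim, yielding Theorem \ref{thm:restriction} with the constant depending only on $N$ and $d$ through the number of pieces produced by the geometric lemma.
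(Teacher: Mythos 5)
Your high-level outline (dualize to the extension operator, use the geometric lemma to reduce to model cases, run a Drury-type offspring argument for the nondegenerate local estimate, then sum over pieces) matches the paper's strategy. But there is a genuine gap in what you have identified as the main complex-specific obstacle, and it is the obstacle that matters.

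You frame the difficulty as follows: in the real proof one uses the ordering $z_1 < \dots < z_d$ to force $|v(z_1,\dots,z_d)|$ to be large, and in $\mathbb C$ one must replace the ordering by some combinatorial sorting of the pieces by mutual separation. This misdiagnoses the role of the ordering. Even in $\mathbb R$, ordering the $z_i$ does not give any lower bound on $|v|$; what the ordering provides is that the sum map $\Sigma(z_1,\dots,z_d) = \sum_i \gamma(z_i)$ is \emph{one-to-one} on $\{z_1 < \dots < z_d\}$, and injectivity is what makes the change of variables $x \mapsto \sum \gamma(z_i)$ legitimate in the convolution / $d$-linear step. In $\mathbb C$ there is no order to appeal to, and $\Sigma$ is genuinely not injective in general. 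The paper handles this by proving a weaker substitute — $\Sigma$ is $O_N(1)$-to-one on $T_j^d$ away from a closed measure-zero set, via a Bezout-theorem argument — and then supplementing the standard change-of-variables identity with a lemma (Lemma~\ref{lem:Multivariable_Calc}) valid for maps of bounded multiplicity, which introduces only a bounded multiplicative loss. Sorting pieces by Euclidean separation neither restores injectivity (the problem already occurs with all $z_i$ in a single piece $T_j$) nor substitutes for it in the pushforward computation; without the multiplicity control your convolution estimate is not justified.

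A secondary difference: the paper's summation over scales follows Stovall's almost-orthogonality scheme literally — a Littlewood-Paley square-function estimate over dyadic annuli $T_{j,n} = \{|z - b_j| \sim 2^n\}$ (keyed to the power-law behavior $|L_\gamma| \sim A_j|z-b_j|^{k_j}$ from the geometric lemma) plus a quantitative $d$-linear estimate that gains an exponential factor $2^{-\epsilon(n_d - n_1)}$ when the scales are separated, proven via the change of variables above and Christ's multilinear lemma. Your proposal substitutes a restricted-weak-type reduction plus pigeonholing on dyadic level sets of $|E_\gamma^* g|$ and a greedy selection of near-extremal $d$-tuples. That is a legitimate family of techniques, but it is not what is carried out here, and it does not obviously sidestep the injectivity problem either — the Jacobian bookkeeping at the core of the greedy selection still requires knowing the multiplicity of $\Sigma$. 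You should also be careful with the claim that the geometric lemma gives two-sided pointwise comparability of $|\Lambda_\gamma|$ to $|\Lambda_{\gamma_{\mathbf n}}|$ after an affine/conformal change; what it actually delivers is the Dendrinos--Wright inequality \eqref{eq:hard_jacobian_estimate} relating $|J_\gamma/v|$ to the product of torsions, together with the power-law estimates \eqref{eq:power_geometric_estimates}, which is weaker than (and different in form from) pointwise comparability to a fixed model.
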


	Bak-Ham provide  a partial answer to the complex restriction theorem  for particular complex curves in \cite{bak_restriction_2014}, and show the optimality of the measure $\lambda$, that is, that any other measure supported on $\gamma$ for which Theorem \ref{thm:restriction} holds in the given range must be absolutely continuous with respect to $\gamma$ \cite[Proposition 2.1]{bak_restriction_2014}, and indicate that the result could hold over all polynomial curves \cite[Footnote 2]{bak_restriction_2014}. 

	Optimality in theorem \ref{thm:real_restriction} was proven in \cite{arkhipov_trigonometric_2008}. We do not have such a result in the complex case for $d>2$. As noted in \cite{bak_restriction_2014}, the optimal exponent in the complex case is unknown, and related to the multi-dimensional Tarry problem.

	This paper follows the proof in \cite{stovall_uniform_2016}. The main challenge is finding a complex substitute for Lemma 3.1 from that paper (which we do in section \ref{sec:the_geometry}). Once this matter is resolved, section \ref{sec:the_analysis} follows the argument in \cite{stovall_uniform_2016} closely, elaborating on the small modifications whenever necessary.

	A more amenable version of the problem above is the extension problem, instead of bounding the restriction operator defined above, we bound the dual extension operator $\mathcal E_\gamma: L^p(d\lambda_\gamma) \to L^q(\mathbb R^n)$ defined as

	\begin{equation}
		\mathcal E_\gamma(f) = \mathcal F^{-1}(f d\lambda_\gamma).
	\end{equation}
	Theorem \ref{thm:restriction} now becomes:
	\addtocounter{thm}{-1}
	\begin{thm}[Dual version]
		For each $N,d$ and $(p,q)$ satisfying:

		\begin{equation}
			p' = \frac{d(d+1)}{2} q, \;\; q> \frac{d^2+d+2}{d^2+d}
		\end{equation}
		there is a constant $C_{N,d,p}$ such that for all polynomials $\gamma: \mathbb C \to \mathbb C^d$ of degree up to $N$ we have:

		\begin{equation}
			\|\mathbb E f\|_{L^{p'}(dx)} \le C_{N,d,p} \| f\|_{L^{q'}(d\lambda_\gamma)} 
		\end{equation}
		for all Schwartz functions $f$, where the Fourier transform is the $\mathbb R^{2n}$-dimensional Fourier transform.
	\end{thm}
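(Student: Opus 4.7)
The plan is to deduce the dual version directly from the restriction form of Theorem \ref{thm:restriction} via the standard $L^p$--$L^q$ duality between the restriction and extension operators. The heart of the matter is Parseval's identity, applied under the $\mathbb C^d \simeq \mathbb R^{2d}$ isomorphism, which is exactly what relates the operator $\mathcal E_\gamma = \mathcal F^{-1}(\,\cdot\, d\lambda_\gamma)$ to the restriction $f \mapsto \hat f|_{\gamma}$.

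First I would set up the pairing. For a Schwartz $f$ on $\mathbb R^{2d}$ and $g \in L^{q'}(d\lambda_\gamma)$, unwinding the definition of $\mathcal E_\gamma$ and using Plancherel yields
\begin{equation}
\int_{\mathbb R^{2d}} \mathcal E_\gamma g(x)\, \overline{f(x)}\, dx
= \int_{\mathbb R^{2d}} \mathcal F^{-1}(g\, d\lambda_\gamma)(x)\, \overline{f(x)}\, dx
= \int g(\xi)\, \overline{\hat f(\xi)}\, d\lambda_\gamma(\xi),
\end{equation}
which is well-defined since $d\lambda_\gamma$ is a locally finite measure (by the assumption that $\gamma$ is polynomial of bounded degree and by the integrability of $|L_\gamma|^{4/(d^2+d)}$ in any bounded region, to be verified along the lines of the geometric lemma in Section \ref{sec:the_geometry}).

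Next I would apply H\"older's inequality on the weighted side, followed by Theorem \ref{thm:restriction} in the restriction form:
\begin{equation}
\left|\int g(\xi)\, \overline{\hat f(\xi)}\, d\lambda_\gamma(\xi)\right|
\le \|g\|_{L^{q'}(d\lambda_\gamma)} \|\hat f\|_{L^{q}(d\lambda_\gamma)}
\le C_{N,d,p}\, \|g\|_{L^{q'}(d\lambda_\gamma)}\, \|f\|_{L^{p}(dx)}.
\end{equation}
Taking the supremum over Schwartz $f$ with $\|f\|_{L^p} \le 1$ and invoking the $(L^p)^{*} = L^{p'}$ duality then gives $\|\mathcal E_\gamma g\|_{L^{p'}(dx)} \le C_{N,d,p}\, \|g\|_{L^{q'}(d\lambda_\gamma)}$, with the same constant as in the restriction formulation. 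A standard density step extends the bound from Schwartz $g$ to all $g \in L^{q'}(d\lambda_\gamma)$.

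There is no substantive obstacle here; the statement is included purely for convenience, since later sections (in particular \ref{sec:the_analysis}, which follows \cite{stovall_uniform_2016}) find it more natural to work with the extension operator $\mathcal E_\gamma$. The only care needed is to fix the Fourier convention so that Parseval produces a genuine pairing between $\hat f$ and $g\, d\lambda_\gamma$ under the $\mathbb C^n \simeq \mathbb R^{2n}$ identification, and to observe that the ranges of $(p,q)$ and $(p',q')$ match under the duality.
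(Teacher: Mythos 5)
Your proof is correct and fills in exactly the standard duality argument that the paper treats as implicit: the paper states the dual (extension) version with the same theorem number via \verb|\addtocounter{thm}{-1}|, signaling that the two formulations are interchangeable by the Parseval/H\"older/duality computation you carry out, and gives no separate written proof. Your argument via Parseval, H\"older in $(q,q')$, and $L^p$--$L^{p'}$ duality is the intended justification.
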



\subsection{Convolution estimates}

A problem that bears some similarities with the restriction problem that of finding convolution estimates for some complex curves. The problem asks, given a measure $d\lambda_\gamma$, for which $p$ and $q$ is the map $f \mapsto f\ast d\lambda_\gamma$ continuous from $L^p$ to $L^q$.

Again, one may ask whether the norm of the operator depends on the specific polynomial, or only on the degree and dimension. This questions have satisfactory answers in the real polynomial setting, which use the geometric lemma of Dendrinos and Wright \cite{stovall_endpoint_2010}, and partial answers for simple polynomial curves in the complex case \cite{chung_convolution_2019}. The author thinks that the methods developed in this paper can therefore be applicable to an extension of those results to the complex case.

	\subsection{Outline of the work} 
	\label{sub:outline_of_the_work}

	The strategy to prove Theorem \ref{thm:restriction} is as follows:

	\begin{enumerate}
		\item The complex plane $\mathbb C$ is partitioned into a bounded number of sets $\mathbb C = \bigsqcup_{i=0}^{N(n,d)} U_i$, so that for each $U_i$ there is a generalized moment curve $\gamma_i$ so that $\gamma_i \sim_{n,d} \gamma$ in $U_i$. The meaning of $\sim$ will become clear in the following sections). This is the main new technical contribution of this paper, and the goal of Section \ref{sec:the_geometry}. 

		\item The rest of the proof follows (with small, suitable variations) an argument by Stovall:

		\begin{enumerate}
		 	\item Theorem \ref{thm:restriction} is proven for the particular case of (perturbations of) the standard moment curve (Theorem \ref{thm:uniform_local_restriction}).

			\item Then it suffices to prove the respective theorem for perturbations of generalized moment curve. The sense in which we will take perturbations is made precise in section \ref{sec:the_geometry}. The proof of the restriction conjecture for perturbations of the moment curve is done in two steps:
            
            \begin{enumerate}
			\item The domain $\mathbb C$ of the generalized moment curve is split into dyadic scales. The non-degenerate, standard moment curve implies that the restriction theorem holds at each scale.
			
			\item Then an almost-orthogonality result between the scales is proven, that allows to sum back. The argument is the same as in Stovall's \cite{stovall_uniform_2016}, except for modifications outlined in Section \ref{sub:almost_orthogonality}.
			\end{enumerate}
		 \end{enumerate} 
	\end{enumerate}

	
\subsection{Notation}

For arbitrary objects (polynomials, natural numbers..) such as $x,y,z$ we will write $A\lesssim_{x,y,z} B$ to denote that there exists a constant $K$ depending only on $x,y,z$, the ambient dimension $d$ and the degree of the polynomial curves involved $n$ so that $A \le K B$. We will use $A\sim_{x,y,z} B$ to denote $A\lesssim_{x,y,z} B$ and $B\lesssim_{x,y,z} A$. We will also use the notation $A=O_{x,y,z}(B)$ to denote $A\lesssim_{x,y,z} B$.

For a differential form $w \in \Lambda^k(\mathbb C^ n)$ we will write $\|w\|$ to denote the euclidean norm of $w$ in the canonical basis $e_1,\dots, e_n$. Given an element $e := e_{j_1}\wedge \dots \wedge e_{j_k}$ we will denote by $w|_e$ as the $e^{\operatorname{th}}$ co-ordinate of $w$ in the canonical basis. 

\subsection*{Acknowledgements}

The author thanks Terence Tao for his invaluable guidance and support, and Sang Truong for many insightful discussions. This work was supported by ``La Caixa'' Fellowship LCF/BQ/AA17/11610013.

	\newpage
	\section{The partitioning lemma} 
	\label{sec:the_geometry}

	The main goal of this section is to prove a complex analogous to the following theorem (Lemma 3.1 in \cite{stovall_uniform_2016}, originally from \cite{dendrinos_uniform_2013}):
	\begin{lemma} \label{L:polynom decomp}
    Let $\gamma:\mathbb R \to \mathbb R^d$ be a polynomial of degree $N$, and assume that $L_\gamma \not\equiv 0$.  We may decompose $\mathbb R$ as a disjoint union of intervals,
    $$
    \mathbb R = \bigcup_{j=1}^{M_{N,d}} I_j,
    $$
    so that for $t \in I_j$,
    \begin{equation} \label{E:L sim}
    |L_\gamma(t)| \sim A_j|t-b_j|^{k_j}, \qquad |\gamma_1'(t)| \sim B_j |t-b_j|^{\ell_j},
    \end{equation}
    and for all $t = (t_1,\ldots,t_d) \in I_j^d$, 
    \begin{equation} \label{E:dw gi}
    |\frac{J_\gamma(t)}{v(t)}| \gtrsim \prod_{j=1}^d |L_\gamma(t_j)|^{\frac 1d}
    \end{equation}
    where for each $j$, $k_j$ and $\ell_j$ are integers satisfying $0 \leq k_j \leq dN$ and $0 \leq \ell_j \leq N$, and the centers $b_j$ are real numbers not contained in the interior of $I_j$.  Furthermore, the map $(t_1,\ldots,t_d) \mapsto \sum_{j=1}^d \gamma(t_j)$ is one-to-one on $\{t \in I_j^d : t_1 < \cdots < t_d\}$.    The implicit constants and $M_{N,d}$ depend on $N$ and $d$ only.  
    \end{lemma}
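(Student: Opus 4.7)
The approach is to reduce the general polynomial case, through a purely real-variable dyadic decomposition, to one on which $\gamma$ is a controlled perturbation of a generalized moment curve, and then verify each of \eqref{E:L sim}, \eqref{E:dw gi}, and the injectivity claim directly on the model.

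\textbf{Step 1: the partition and \eqref{E:L sim}.} Both $L_\gamma$ and $\gamma_1'$ are polynomials in $t$ of degrees at most $dN$ and $N$ respectively. Factor each over $\mathbb{R}$ into linear and irreducible quadratic factors, and collect the real roots together with the real parts of the non-real roots into a finite set $\{\beta_1 < \cdots < \beta_s\}$ of $O_{N,d}(1)$ critical points. Partition $\mathbb{R}$ at the $\beta_\ell$ and refine dyadically about each $\beta_\ell$: on the annular piece $\{t : 2^k \le |t-\beta_\ell| < 2^{k+1}\}$, each linear factor $(t-\alpha_i)$ has $|t-\alpha_i|$ comparable either to $|t-\beta_\ell|$ (if $\alpha_i$ lies within $O(2^{k+1})$ of $\beta_\ell$) or to $|\alpha_i-\beta_\ell|$ (otherwise), while the irreducible quadratic factors are of constant size on the piece. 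Multiplying these comparisons and setting $b_j = \beta_\ell$ yields $|L_\gamma(t)| \sim A_j |t-b_j|^{k_j}$ and $|\gamma_1'(t)| \sim B_j |t-b_j|^{\ell_j}$. The non-trivial dyadic scales can be grouped into $O_{N,d}(1)$ pieces, and the two unbounded tails are handled by the same scheme after a dyadic scaling at infinity.

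\textbf{Step 2: the lower bound \eqref{E:dw gi}.} The orders of vanishing $k_j, \ell_j$ determined in Step 1 assemble, through the flag $\Lambda^{(1)}_\gamma, \ldots, \Lambda^{(d)}_\gamma$ at $b_j$, into a multi-index $\mbf{n}_j$. After translating by $b_j$ and applying the affine change of coordinates in $\mathbb{C}^d$ that sends the osculating flag of $\gamma$ at $b_j$ to the canonical flag --- which leaves both $|J_\gamma/v|$ and $|L_\gamma|$ unchanged up to a common multiplicative constant --- the curve $\gamma$ becomes a controlled perturbation of the generalized moment curve $\gamma_{\mbf{n}_j}$. Example \ref{ex:differential_forms} identifies $J_{\gamma_{\mbf{n}}}/v$ with the Schur polynomial $S_{\delta(\mbf{n})}$ up to a nonzero constant; since Schur polynomials are sums of monomials with non-negative integer coefficients, applying AM-GM to the monomial expansion and exploiting its $S_d$-symmetry yields
\begin{equation*}
S_{\delta(\mbf{n})}(s_1, \ldots, s_d) \gtrsim_{N,d} \Bigl(\prod_{j=1}^d s_j\Bigr)^{|\delta(\mbf{n})|/d} \qquad (s_j > 0),
\end{equation*}
which, after setting $s_j = t_j - b_j$ and using $|L_{\gamma_{\mbf{n}_j}}(t)| \sim |t - b_j|^{|\delta(\mbf{n}_j)|}$, is exactly \eqref{E:dw gi} for the model. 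The perturbation error is absorbed because on $I_j$ the lower-order deviations from $\gamma_{\mbf{n}_j}$ are, by the dyadic construction, strictly dominated by the leading moment-curve contribution, provided Step 1 is carried out with sufficiently many extra dyadic scales depending only on $N,d$.

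\textbf{Step 3: injectivity and the main obstacle.} The Jacobian determinant of $(t_1, \ldots, t_d) \mapsto \sum \gamma(t_j)$ is (up to sign) $J_\gamma(t_1, \ldots, t_d)$, which by Step 2 is non-vanishing on the open ordered simplex $\{t_1 < \cdots < t_d\} \subset I_j^d$. For the unperturbed generalized moment curve the sum map is globally injective on ordered tuples by the classical invertibility of the map from ordered $d$-tuples to their power sums; a homotopy argument deforming back to the model while monitoring the non-vanishing Jacobian and boundary behaviour on the ordered simplex then transfers injectivity to $\gamma$. The step I expect to be the main obstacle is Step 2: making the reduction to the model quantitative and uniform across all degree-$N$ polynomials, which requires tracking how the Schur lower bound and the perturbation estimate depend on $|\delta(\mbf{n}_j)|$ (which can range up to $O(dN)$) and ensuring the dyadic refinement parameter and the implicit constants depend on $N,d$ only.
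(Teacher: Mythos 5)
The statement you are proving is cited by the paper (from Stovall, there attributed to Dendrinos--M\"uller) rather than proved directly; the paper's contribution is the complex analogue (Theorem \ref{lem:geometric_lemma}), proved by a zoom-in/compactness scheme, with a short remark at the end of Section \ref{sec:the_geometry} on recovering the real case from it. Your proposal takes the more classical direct route of a dyadic decomposition driven by the roots of $L_\gamma$ and $\gamma_1'$, closer in spirit to the original Dendrinos--Wright argument. Step 1 is sound, and the reduction of the model case to Schur-polynomial positivity plus AM--GM is the right engine; the paper uses that same identity.

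However, Step 2 contains a genuine gap. After the affine normalization at $b_j$, the curve is not a pointwise small perturbation of $\gamma_{\mbf{n}_j}$: the higher-order coefficients can be arbitrarily large, and the assertion that ``the lower-order deviations are strictly dominated by the leading moment-curve contribution, provided Step 1 is carried out with sufficiently many extra dyadic scales'' restates the conclusion rather than arguing for it --- the number of dyadic scales is already pinned at $O_{N,d}(1)$ while the perturbation coefficients are unbounded. More fundamentally, $J_\gamma/v$ is a $d$-linear determinant, so the perturbation enters through cross-terms that couple all $d$ variables, and comparing each $\gamma_i$ separately to its leading monomial does not control those cross-terms. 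The paper's proof of the complex analogue devotes its central technical input, the transversality Lemma \ref{lem:transversality_corrected} (proved via a combinatorial Young-tableau comparison), precisely to rule out destructive cancellation between the Schur-polynomial pieces associated to different coordinate subsets; a non-cancellation mechanism of this type, or the explicit factorization used by Dendrinos--Wright, is what Step 2 needs and lacks. Step 3 also needs work: non-vanishing of the Jacobian on the open ordered simplex does not by itself give global injectivity without boundary/infinity control along your homotopy, and the ``classical invertibility'' you invoke is Newton's identities for the standard moment curve $n_k = k$, not for a general increasing multi-index $\mbf n$; injectivity of the sum map for generalized moment curves is itself a theorem (of Steinig type) that must be supplied, not assumed.
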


    The main difference when in the statement for the complex case comes from defining the concept of interval. Our alternative concept is that of generalized triangle. Generalized triangles are the (possibly unbounded) intersection of three semiplanes. The second main difference is the fact that we will not be able to prove that the sum map is one-to-one. We will instead prove it has bounded generic multiplicity (see \ref{lem:geometric_lemma} for the precise statement). This fact, as already observed in \cite{bak_restriction_2014}. The torsion $L^\gamma:=\Lambda^{(d)}$ is a particular case of the $\Lambda_\gamma^{(k)}$ that we will need to resort to in multiple occasions in the sequel. Therefore we present the complexified version with that notation instead.

	\begin{thm}[Lemma \ref{L:polynom decomp} complexified]
	\label{lem:geometric_lemma}

		Let $\gamma:\mathbb C \to \mathbb C^d$ be a polynomial curve of degree $N$, and assume $\Lambda^{(d)}_\gamma$ is not identically zero. Then we can split $\mathbb C \cup \{\infty\}$ into $M = O_{N}(1)$ generalized triangles\footnote{The generalized triangles in $\mathbb C \sim \mathbb R^2$ we will consider are open sets of the form $\{ x \in  \mathbb R^2| v_i\cdot x>0, i=1,2,3\}  $ for three given vectors $v_i\in\mathbb R^2$. This includes the (interior) convex hull of three points, but also triangles with a vertex at infinity, or half planes.}  $$C = \bigcup^{M_{N,d}}T_j$$ so that on each generalized triangle  $T_j$:
		\begin{equation}
		\label{eq:power_geometric_estimates}
			|\Lambda_{\gamma}^{(d)}(z)| \sim_N A_j|t-b_j|^{k_j}, \qquad |\gamma_1'(t)| \sim_N B_j |t-b_j|^{l_j},
		\end{equation}
		and, for all $z := (z_1, \dots, z_d) \in T_j^d$:
		\begin{equation}
			\tag{DW}
			\label{eq:hard_jacobian_estimate}
			\left|\frac{J_{\gamma}(z)}{v(z)}\right|\gtrsim_N \prod_{i=1}^d \Lambda^{(d)}_{\gamma}(z_i)^{1/d}
		\end{equation}
		where for each $j$ $k_j$, $l_j$  are integers satisfying $0 \leq k_j \leq dN$, and the centers $b_j \in \mathbb C$ are not in $T_j$.
		Furthermore, for each triangle $T_j$ there is a closed, zero-measure set $R_j \subseteq T_j^d$ so that the sum map $\Sigma(z):=\sum_{i=1}^d \gamma(z_i)$ is $O_N(1)$-to-one in $T_j^d\setminus R_j$. The implicit constants and $M_{N,d}$ depend on $N$ and $d$ only.  
	\end{thm}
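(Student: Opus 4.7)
The plan is to prove the theorem in three stages, following the spirit of Dendrinos--Wright but replacing their interval-ordering arguments with complex-analytic and compactness tools. Stage~(A) constructs the partition of $\mathbb{C}\cup\{\infty\}$ into generalized triangles together with the power-law estimates \eqref{eq:power_geometric_estimates}. Stage~(B) establishes the Dendrinos--Wright inequality \eqref{eq:hard_jacobian_estimate} on each triangle; this is the main obstacle. Stage~(C) verifies the bounded multiplicity of the sum map $\Sigma$.

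For stage~(A), I would factor $\Lambda^{(d)}_\gamma(z) = c_1 \prod_i (z-\rho_i)^{m_i}$ and $\gamma_1'(z) = c_2 \prod_i (z-\sigma_i)^{n_i}$ and let $S\subset\mathbb{C}\cup\{\infty\}$ denote the union of these root sets together with $\infty$. I partition $\mathbb{C}\cup\{\infty\}$ using finitely many real-affine lines --- perpendicular bisectors between pairs of points of $S$, supplemented by polygonal approximations to dyadic circles around each $\rho\in S$ --- so that inside each cell some $b_j\in S$ is closest to every $z$ and the ratios $|z-b_j|/|z-\rho|$ and $|z-b_j|/|b_j-\rho|$ are controlled above and below by constants depending on $N$ for every other $\rho\in S$. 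Each cell is a convex polygon of bounded complexity which I then triangulate into $O_N(1)$ generalized triangles. The factorization yields
\begin{equation*}
|\Lambda^{(d)}_\gamma(z)| \;\sim_N\; |c_1|\,|z-b_j|^{k_j}\prod_{\rho_i\neq b_j}|b_j-\rho_i|^{m_i} \;=:\; A_j|z-b_j|^{k_j},
\end{equation*}
and the analogous estimate for $|\gamma_1'(z)|$, giving \eqref{eq:power_geometric_estimates} with $b_j$ on the boundary of $T_j$ by construction.

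Stage~(B) is the crux. My plan is a compactness/normalization argument reducing \eqref{eq:hard_jacobian_estimate} to a neighbourhood of a generalized moment curve, for which the inequality follows by direct computation from the Schur polynomial identity exhibited in Example~\ref{ex:differential_forms}. On each $T_j$ I would use the covariance of $J_\gamma/v$ and $\Lambda^{(d)}_\gamma$ under affine maps $L\in\mathrm{GL}(\mathbb{C};d)$ acting on $\mathbb{C}^d$ and M\"{o}bius maps acting on $\mathbb{C}\cup\{\infty\}$ (noted in Section~\ref{sub:the_affine_measure_on_a_complex_curve}) to normalize $b_j\mapsto 0$, $T_j\mapsto T^\ast$ for a fixed reference triangle, and $A_j=1$. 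After normalization the resulting family of polynomial curves of degree $\leq N$ lies in a bounded subset of a finite-dimensional space, so if \eqref{eq:hard_jacobian_estimate} failed with arbitrarily bad constants, one would extract a convergent subsequence $\gamma_n\to\gamma_\infty$ together with test points $z^{(n)}\to z^{(\infty)}\in(\overline{T^\ast})^d$. The normalization guarantees $|\Lambda^{(d)}_{\gamma_\infty}(z)|\sim|z|^k$ on $T^\ast$, so in particular $\Lambda^{(d)}_{\gamma_\infty}\not\equiv 0$ and the right-hand side of \eqref{eq:hard_jacobian_estimate} stays positive, while the left-hand side tends to zero --- a contradiction once \eqref{eq:hard_jacobian_estimate} is verified directly for $\gamma_\infty$ by exploiting its proximity to a generalized moment curve. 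The delicate step will be arranging the normalization so that $z^{(\infty)}$ does not escape to a locus where $v(z^{(\infty)})=0$ and so that the limit does not degenerate in degree; both of these require that the partition in stage~(A) already separates the scales that could cause such collapse.

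Stage~(C) is essentially algebraic. Descending $\Sigma$ to the $S_d$-quotient gives a polynomial map $\tilde\Sigma:\mathrm{Sym}^d(\mathbb{C})\cong\mathbb{C}^d\to\mathbb{C}^d$ whose coordinates (expressed in elementary symmetric functions via Newton's identities) have degree at most $N$, so B\'{e}zout's theorem bounds its generic fiber cardinality by $O_N(1)$. The Jacobian of $\tilde\Sigma$ agrees up to constants with $J_\gamma(z_1,\dots,z_d)/v(z_1,\dots,z_d)$, which is a nontrivial polynomial since $\Lambda^{(d)}_\gamma\not\equiv 0$; its zero set pulled back to $T_j^d$ gives the required closed algebraic (hence null) set $R_j$, outside which $\tilde\Sigma$ is locally biholomorphic and therefore $\Sigma$ is $O_N(1)$-to-one on $T_j^d\setminus R_j$.
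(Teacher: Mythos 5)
Your high-level strategy -- reduce to the generalized moment curve via the Schur polynomial positivity, then leverage compactness -- is the same as the paper's, and your stage (C) via $S_d$-symmetrization and B\'ezout matches the paper's injectivity argument. The substantive gap is in stage (B).

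The normalization you describe (M\"{o}bius on the domain to send $b_j\mapsto 0$ and $T_j\mapsto T^\ast$, affine on $\mathbb C^d$ to set $A_j=1$) uses roughly $d^2+3$ free parameters, while the space of degree-$N$ curves has dimension $d(N+1)$; there is no reason the normalized family lies in a \emph{bounded} set of coefficients, so you cannot simply extract a convergent subsequence $\gamma_n\to\gamma_\infty$. Worse, even for a fixed polynomial, the roots of $\Lambda^{(d)}_\gamma$ can cluster at several nested scales (e.g.\ $\gamma_n = (z,\, z^3+\delta_n z+\epsilon_n z^2,\, z^4+\epsilon_n z^6)$ with $\epsilon_n\ll\delta_n$), and no single affine-plus-M\"obius normalization or single Voronoi/dyadic cell decomposition separates all those scales at once. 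You acknowledge the issue and defer it to stage (A), but the stage (A) partition is built from the root locations of $\Lambda^{(d)}_\gamma$ and $\gamma_1'$ only; it does not by itself give the uniform separation needed to prevent $z^{(\infty)}$ from collapsing onto the diagonal $\{v=0\}$ or the limit $\gamma_\infty$ from degenerating in degree under your compactness argument.

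The paper handles exactly this by an inductive multi-scale ``zoom-in'' procedure rather than a single normalization: it rescales to successive annuli between consecutive scales of root clustering (Lemmas~\ref{lem:annuli_continuity}, \ref{lem:annuli_convergence}), and -- crucially -- it first applies a near-identity $\mathrm{GL}(d;\mathbb C)$ change of coordinates, the ``honest zeros'' Lemma~\ref{lem:honest_zeros}, so that the smallest nonzero root scale of the \emph{components} of $\gamma$ matches the smallest nonzero root scale of $J_\gamma$. That lemma is what ensures each zoom-in strictly decreases the multiplicity of $J_\gamma$ at the origin (Lemma~\ref{lem:honest_helper}), making the induction terminate in $O_{N,d}(1)$ steps and thereby bounding both the number of triangles and the implicit constant simultaneously. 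Without some substitute for this mechanism -- a way to align the component-level root scales with the Jacobian-level root scales and to control the number of rescaling steps -- your compactness argument in stage (B) does not close, and your stage (A) partition, while reasonable for establishing \eqref{eq:power_geometric_estimates}, does not by itself supply the needed scale separation for \eqref{eq:hard_jacobian_estimate}.
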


    \begin{example}
    To understand the need to split $\mathbb C$ into multiple sets, consider the curve $\gamma(z):= (z, 2 z^3 - 3 z^2$. We have that 
    $$\Lambda_\gamma(0,1) = \det \begin{pmatrix}1 &0 \\ 1& 0 \end{pmatrix} = 0$$
    which shows that the theorem does not hold unless we partition $\mathbb C$ in such a way that $0$ and $1$ do not belong to the same subset. 
    
    As we shall see, for any point  $z\neq \frac 1 2$ (the only point where $\Lambda^{(2)}$ vanishes) there is a neighbourhood $B_z$ where $\gamma$ behaves similarly the standard moment curve. Lemma \ref{lem:geometric_lemma} is trivially true for the moment curve (for the moment curve case, estimate \eqref{eq:hard_jacobian_estimate}, the hardest estimate in \ref{lem:geometric_lemma} reads $1\gtrsim 1$), and proposition \ref{prop:local_nondegenerate_jacobian} shows how to use this fact to transfer it to curves that behave similarly.
    
    Therefore to prove estimate \eqref{eq:hard_jacobian_estimate} of Theorem \ref{lem:geometric_lemma} it suffices to show that it holds near $z=\frac 1 2$, and at infinity.
    
    By saying that the estimate holds at infinity we mean that for a big enough ball $B_R$ we can split $\mathbb C\setminus B_R$ into $O(1)$ generalized triangles where \eqref{eq:hard_jacobian_estimate} holds. The idea is similar. For $|z|>R$ $\gamma(z) = (z, z^3(2+3 z^{-1}) )$ behaves like a moment curve $\mu_{(1,3)} = (z, z^3)$ (by the affine invariance the factor 2 does not change the statement). Section\ref{sub:model_case_generealized_moment_curve} shows that estimate \eqref{eq:hard_jacobian_estimate} holds for generalized moment curves, and section \ref{sub:uniformity_for_polynomials} explains how to transfer that fact to perturbations of moment curves at infinity.
    
    Section \ref{sub:fixed_polynomial_case} deals with zeroes of $\Lambda^{(2)}_\gamma$ (the zeroes of $\Lambda^{(d)}_\gamma$ in the general case) at finite points. The approach is the same. The curve $\tilde \gamma(z) := \gamma(z+\frac 1 2) -\gamma(\frac 1 2)= (z, 2z^3 +6z) $ behaves like $\mu_{1,3}$ near zero. This is because an affine transformation lets us cancel the term $6z$.
    \end{example}

	This theorem is the main geometric ingredient that is used in the real version of Theorem \ref{thm:restriction}. Informally, the theorem states that locally a polynomial is similar to a moment curve. Estimate \eqref{eq:power_geometric_estimates} was proven in \cite{bak_restriction_2014}, and can be proven using suitable modifications of the argument described in Lemma 3.1 of \cite{stovall_uniform_2016} as well. To simplify notation, and since the inequality we want to prove above concerns $\gamma'$ only and not $\gamma$, we will prove inequality \eqref{eq:hard_jacobian_estimate} for a generic curve $\gamma$ that will end up being the $\gamma'$ above.

	The strategy to prove the theorem will be the following: First inequality \eqref{eq:hard_jacobian_estimate} will be shown for the moment curve, or for generalized moment curves (that is, curves that are affine equivalent to a curve of the form $(z_1^{\delta_1}, \dots, z_1{^{\delta_d}})$, see example \ref{ex:differential_forms} above). Then, we will show  that the result is in fact stable to suitable small perturbations of the polynomial. This, together with a compactness argument on $\mathbb C \cup \{\infty\}$, will give the non-uniform estimate (where the constant could depend on the polynomial). The only potential source of non-uniformity at this point will be the number of open sets, and finally we will show a stability on the number of open sets to use, and a compactness argument on the set of polynomials with coefficients $\lesssim 1$ will finish the proof.

	\subsection{Preliminaries} 
	\label{sub:prelim}

	In this section we will define a systematic way of changing co-ordinates to polynomial curves to understand the behavior near a point, which we refer to as \textit{the zoom-in method} from now on. 

	\begin{defi}
		[Canonical form for a curve]
		Let $\gamma = (\gamma_1, \dots, \gamma_d)$ be a polynomial curve of dimension $d$. Let $\delta_i$ be the lowest degree of a non-zero monomial in $\gamma_i$. Then $\gamma$ is in \emph{canonical form at zero} if:

		\begin{itemize}
		    \item $\gamma(0)=0$
			\item $\delta_1< \dots < \delta_d$
			\item The coefficient of degree $\delta_i$ in $\gamma_i$ is $1$.
		\end{itemize}
		similarly, that $\gamma$ is in canonical form at $c\in\mathbb C$ if the translated curve $z\mapsto \gamma(z-c)$ is in canonical form at zero. The curve $\gamma$ is in canonical form at infinity if $z^D \gamma(z^{-1})$ \textbf{is} in canonical form at zero, where $D$ is the maximum of the degrees of $\gamma_i$.
	\end{defi}
	
	\begin{example}
	Let $\mu(z)=(z, z^2, \dots, z^d)$ be the moment curve. Then $\mu$ is in canonical form at zero (and the same is true for any generalized moment curve $\mu_{\mbf n}= (z^{\mbf n_1}, \dots , z^{\mbf n_d})$). In general, curves of canonical forms are higher order perturbations of generalized moment curves, and that is what makes them relevant.
	\end{example}

	By performing a Taylor expansion it is not difficult to see that polynomial admits a canonical form (after an affine reparametrization) for every point if and only if the Jacobian for the curve is not the zero polynomial (that is, as long as the curves are linearly independent as polynomials). Given a polynomial $\gamma$ in canonical form we define a \textbf{zoom in at zero at scale $\lambda$} as the (normalized) zoom-in:

	\begin{defi}[Zoom in on a curve]
	 Given a curve $\gamma$ in canonical form, the \emph{zoom-in} of $\gamma$ \emph{at scale $\lambda$} is the polynomial curve $\mathcal B_\lambda [\gamma](z) := \text{diag}(\lambda^{-\delta_1}, \dots, \lambda^{-\delta_d}) \gamma(\lambda z)$. 
	\end{defi}
	
	Note that the coefficients of $\mathcal B_\lambda [\gamma](z)$ converge to the polynomial $(z^{\delta_1}, \dots, z^{\delta_d})$ as $\lambda$ goes to zero. 

	
	\subsection{Model Case: Generalized moment curve} 
	\label{sub:model_case_generealized_moment_curve}

	For a generalized moment curve $\mu_{\mbf n}$ with exponents $\mbf n:= (\mbf n_1< \dots <\mbf n_d)$ (that is $\mu_{\mbf n}(z):= (z^{n_1}, \dots, z^{n_d})$ ), and for ${\bf z} := (z_1, \dots, z_d)$ the following holds:

	\begin{equation}
		\frac{J(\mbf z)}{v(\mbf z)} = \pi_{\mbf n}S_{\delta(\mbf n)} (\bf z)
	\end{equation}
	where $\pi_{\mbf n}$ is the product of all the $n_i$, $\delta(\mbf n) = (\mbf n_1-1, \mbf n_2-2, \dots, \mbf n_d-d)$ is the excess degree, and $S_{\mbf k}$, for a general non-decreasing multi-index $k$ is the Schur polynomial associated to $\mbf k$. A classical result in algebraic combinatorics (see, for example, \cite{fulton_1996}) states that:
	
	\begin{equation}
	S_{\bf k}(z_1, \dots, z_d) = \sum_{(t_i) \in T_{\bf k}} z_1^{t_1},\dots, z_d^{t_d}	
	\end{equation}	
	where $T_{\bf k}$ is the set of semistandard Young tableaux of shape $\bf n$, and $(t_i)$ are the weights associated to each tableaux $t$.
	
    \begin{example}
        Let $\mbf n= (1,3,5)$. Then the excess degree is $(0,1,2)$. The Young diagram associated to this partition is of the form
        
$$\begin{tabular}{|c|c|}
\hline
 *&  *\\
\cline{1-2}
 * \\
\cline{1-1}
\end{tabular}$$ where the row of length 0 is not drawn. Thus, counting over all possible semi-standard Young Tableaux (ways of filling the Young diagram with the indices $1,2,3$ ($1,\dots ,d$ in general) that are strictly increasing vertically and weakly increasing horizontally) gives the polynomial:
   
\[
\begin{tabular}{ccccccccc}
&
\begin{tabular}{|c|c|}
\hline
$1$ & $1$ \\
\cline{1-2}
$2$ \\
\cline{1-1}
\end{tabular}\ &
\begin{tabular}{|c|c|}
\hline
$1$ & $1$ \\
\cline{1-2}
$3$ \\
\cline{1-1}
\end{tabular}\ &
\begin{tabular}{|c|c|}
\hline
$1$ & $2$ \\
\cline{1-2}
$2$ \\
\cline{1-1}
\end{tabular}\ &
\begin{tabular}{|c|c|}
\hline
$1$ & $2$ \\
\cline{1-2}
$3$ \\
\cline{1-1}
\end{tabular}\ &
\begin{tabular}{|c|c|}
\hline
$1$ & $3$ \\
\cline{1-2}
$2$ \\
\cline{1-1}
\end{tabular}\ &
\begin{tabular}{|c|c|}
\hline
$1$ & $3$ \\
\cline{1-2}
$3$ \\
\cline{1-1}
\end{tabular}\ &
\begin{tabular}{|c|c|}
\hline
$2$ & $2$ \\
\cline{1-2}
$3$ \\
\cline{1-1}
\end{tabular}\ &
\begin{tabular}{|c|c|}
\hline
$2$ & $3$ \\
\cline{1-2}
$3$ \\
\cline{1-1}
\end{tabular}
\\
$S_{(0,1,2)}$
& $= \quad z_1^2z_2$ 
& $+ \quad z_1^2z_3$
& $+ \quad z_1z_2^2$
& $+ \quad z_1z_2z_3$
& $+ \quad z_1z_2z_3$
& $+ \quad z_1z_3^2$
& $+ \quad z_2^2z_3$
& $+ \quad z_2z_3^2$
\end{tabular}
\]

\end{example}
	
The statement above gives us a lot of control about the $\Lambda_{\mu_(\mbf n)}$ for general moment curves. The most important fact will be that they are symmetric polynomials with positive coefficients times the Vandermonde term. Now, to compare the  forms $\Lambda^{(d)}_{\mu_(\mbf n)}(z)$ and $\Lambda(z_1, \dots, z_d)$, the following fact is useful:

	\begin{lemma}
	\label{lem:det_is_limit}
		Let ${\bf z}\in \mathbb C^d$, with $z_i\neq z_j$ for $i\neq z$, let $s\in \mathbb C$, and $\gamma:\mathbb C\to \mathbb C^d$ a polynomial curve, then:
		\begin{equation}
		J_\gamma(s) = C_d \lim_{\lambda\to 0} \frac{\Lambda_\gamma(\lambda {\bf z}+ s)}{v(\lambda {\bf z})} 
		\end{equation}  
		and, in particular, in the case when $\gamma$ is a moment curve of exponent $\bf n$,
		\begin{equation}
		\label{eq:det_is_schur}
			\Lambda^{(d)}_\gamma(s) = C_d S_{\bf n}(s, \dots, s)
		\end{equation}
	\end{lemma}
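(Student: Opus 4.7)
The plan is to exploit the fact that $\Lambda_\gamma(z_1,\dots,z_d) = \det[\gamma'(z_1), \dots, \gamma'(z_d)]$ vanishes whenever any two of the $z_i$ coincide (the determinant has two equal columns), so each factor $(z_i - z_j)$ divides $\Lambda_\gamma$, and hence $\Lambda_\gamma/v$ extends to a polynomial on $\mathbb{C}^d$. The limit in the statement is therefore just the evaluation of this polynomial along the diagonal $\mathbf{z} \to (s,\dots,s)$ in a rescaled coordinate, which I would compute by a Taylor expansion about $s$.

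Concretely, write $\gamma'(\lambda z_i + s) = \sum_{k \ge 0} \gamma^{(k+1)}(s)\,(\lambda z_i)^k / k!$. Multilinearity of the determinant expands $\Lambda_\gamma(\lambda\mathbf{z}+s)$ as
\[
\sum_{(k_1,\dots,k_d)} \det\bigl[\gamma^{(k_1+1)}(s), \dots, \gamma^{(k_d+1)}(s)\bigr] \, \prod_{i=1}^d \frac{(\lambda z_i)^{k_i}}{k_i!}.
\]
Antisymmetry kills every tuple with a repeated $k_i$, and the lowest-order surviving tuples are the permutations of $\{0,1,\dots,d-1\}$, each contributing $\lambda^{0+1+\cdots+(d-1)} = \lambda^{\binom{d}{2}}$; all other nonzero tuples contribute strictly higher powers of $\lambda$.

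Setting $k_i = \sigma(i)-1$ for $\sigma \in S_d$, the determinant factor becomes $\operatorname{sgn}(\sigma)\,\Lambda^{(d)}_\gamma(s)$, and the remaining sum
\[
\sum_{\sigma \in S_d} \operatorname{sgn}(\sigma) \prod_i z_i^{\sigma(i)-1}
\]
is exactly the Vandermonde $\det[z_i^{j-1}]_{i,j}$, which equals $\pm v(\mathbf{z})$. Collecting the $1/k_i!$ denominators into a constant $C_d^{-1} := \prod_{k=0}^{d-1}(k!)^{-1}$ gives
\[
\Lambda_\gamma(\lambda\mathbf{z}+s) = C_d^{-1}\,\lambda^{\binom{d}{2}}\, v(\mathbf{z})\,\Lambda^{(d)}_\gamma(s) + O(\lambda^{\binom{d}{2}+1}).
\]
Since $v(\lambda\mathbf{z}) = \lambda^{\binom{d}{2}} v(\mathbf{z})$, dividing and sending $\lambda \to 0$ yields the first identity with $C_d = \prod_{k=0}^{d-1} k!$. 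The second identity (\ref{eq:det_is_schur}) then follows by specializing the first to $\gamma = \mu_{\mathbf{n}}$ and invoking the Schur identity $\Lambda_{\mu_{\mathbf{n}}}/v = \pi_{\mathbf{n}} S_{\delta(\mathbf{n})}$ recorded in Example \ref{ex:differential_forms}, evaluated at $(s,\dots,s)$.

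The proof is essentially forced by the multilinear/antisymmetric structure; there is no real obstacle, and the only delicacy is bookkeeping the combinatorial constant $C_d$ and the Vandermonde sign convention. One small sanity check I would perform at the end is to verify $C_d$ against the formula $c_k = \prod_{i=1}^{k-1} k!$ quoted in Example \ref{ex:differential_forms} in the case $k=d$, to confirm consistency with the specialization used later in Section \ref{sub:fixed_polynomial_case}.
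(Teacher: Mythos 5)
Your proof is correct and takes essentially the same route as the paper: both Taylor-expand $\gamma'$ about $s$ and identify the leading $\lambda^{\binom{d}{2}}$-order contribution as a Vandermonde factor times $\Lambda^{(d)}_\gamma(s)$, with the only difference being packaging (the paper factorizes the column matrix as $T_\gamma Z + O(\lambda^d)$ and applies multiplicativity of the determinant, whereas you expand by multilinearity directly and sum over permutations of $\{0,\dots,d-1\}$). Both bookkeepings produce the same constant $C_d = \prod_{k=0}^{d-1} k!$, matching the $c_d$ quoted in Example~\ref{ex:differential_forms}.
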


	\begin{proof}
		By Taylor expansion we have:

		\begin{equation}
			\gamma'_i(s+ \lambda \mbf z_j) =  \sum_{k=1}^{d} \frac 1 {(k-1)!}\gamma_i^{(k)}(s) \lambda^{k-1} \mbf z_j ^{k-1} + O(\lambda^d).
		\end{equation}
		Now, defining the matrices $\Gamma_{ij}' = \gamma_i(s+ \lambda \mbf z_j)$, $ { Z}_{kj} =  (\lambda \mbf z_j) ^{k-1}$, and $(T_\gamma)_{ik} = \frac 1 {(k-1)!}\gamma_i^{(k)}(s) $ the equation above can be rewritten as:

		\begin{equation}
			\Gamma = T_\gamma Z + O(\lambda^d).
		\end{equation}
		Since the determinant of $Z$ is $v({\lambda \mbf z})$, the lemma follows from the multiplicative property of the determinant:

		\begin{equation}
			\frac{\Lambda_\gamma(\lambda {\bf z}+ s)}{v(\lambda {\bf z})} = 
			\frac{\det \Gamma}{\det Z} =  \det [T_\gamma +  Z^{-1}O(\lambda^d)] \stackrel{\lambda \to 0}{\to} \det T_\gamma =  C_d \Lambda^{(d)}_\gamma(s).
		\end{equation}
		The fact that $Z^{-1}= o(\lambda^{-d})$ (and thus we can eliminate the term as $\lambda\to 0$) is a quick computation from the adjoint formula for the inverse. Note that the value of $C_d = \prod_{i=1}^d \frac 1 {(k-1)!}$ is in fact explicit.
	\end{proof}

	\begin{remark}
		The same argument works as well for $\Lambda^{(k)}_\gamma$, $1\le k<d$, since each component of $\Lambda^{(k)}_\gamma$ is a determinant of components of the polynomial, giving to the more general equality:
		
		\begin{equation}
		\Lambda^{(k)}(s) = C_k \lim_{\lambda\to 0} \frac{\Lambda_\gamma(\lambda {\bf z}+ s)}{v(\lambda {\bf z})} 
		\end{equation}  
		
		for $\bf z$ in $\mathbb C^k$ without repeated components.
	\end{remark}
	
	The most relevant example of the theorems above is the case where $\mu_{\mbf n}$ is a moment curve. In this case (see Example \ref{ex:differential_forms}) Lemma \ref{lem:det_is_limit} implies that the torsion $\Lambda^{(d)}$ is (up to a constant depending on $\mbf n$ and $d$) equal to the Schur polynomial $S_{\delta(\mbf n)}$ (where, again $\delta(\mbf n) = (\mbf n_1 - 1, \mbf n_2 -2, \dots , \mbf n_d - d)$. A fact that will be extremely relevant is that all the monomials of Schur polynomials are positive. Our application of this fact is the following lemma:
    
    \begin{lemma}
    \label{lem:schur_positive_complex}
        Let $S_{\mbf n}$ be a Schur polynomial in $d$ variables, then there exists an $\epsilon>0$ depending on $\mbf n,d$ so that for any angular sector $W_i = \{z \in \mathbb C \setminus\{0\}: |\arg z - \theta|<\epsilon \}$ and complex numbers $z_1, \dots, z_d \in W$ 
        
        \begin{equation}
        \label{eq:schur_positive_complex}
            \left|\frac{e^{-i |\mbf n| \theta} S_{\mbf n}(z_1, \dots, z_d)}{ S_{\mbf n}(|z_1|, \dots, |z_d|)} -1 \right |< \frac 1 {100}
        \end{equation}
        
        in particular, we obtain that for $z =(z_1, \dots z_k)$ in $W_i^k$ we have a \emph{reverse triangle inequality} for Schur polynomials:
        
        \begin{equation}
           \left |  \sum_{t\in T_n} z^t \right| = |S_n(z)|\gtrsim_{n,d} S_n(|z|) = \sum_{t\in T_n} |z^t|
        \end{equation}
    \end{lemma}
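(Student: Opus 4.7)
The plan is to exploit the fact, noted just before the lemma, that the Schur polynomial expands as $S_{\mbf n}(z_1,\dots,z_d) = \sum_{t \in T_{\mbf n}} z_1^{t_1}\cdots z_d^{t_d}$ with \emph{positive} (in fact, nonnegative integer) coefficients and that it is homogeneous of degree $|\mbf n| = \sum_i \mbf n_i$. Because every monomial shares the same total degree, the argument of each monomial is controlled purely by the arguments of the $z_j$.

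Concretely, I would write $z_j = |z_j| e^{i \theta_j}$ with $\theta_j \in (\theta - \epsilon, \theta + \epsilon)$. Then each monomial satisfies
\begin{equation*}
 z_1^{t_1}\cdots z_d^{t_d} = |z_1|^{t_1}\cdots |z_d|^{t_d}\, e^{i(t_1 \theta_1 + \cdots + t_d \theta_d)} = |z_1|^{t_1}\cdots|z_d|^{t_d}\, e^{i|\mbf n|\theta} e^{i\phi_t},
\end{equation*}
where $\phi_t := \sum_j t_j(\theta_j - \theta)$ obeys $|\phi_t| \le |\mbf n|\,\epsilon$, using $\sum_j t_j = |\mbf n|$. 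Factoring $e^{i|\mbf n|\theta}$ out of $S_{\mbf n}(z_1,\dots,z_d)$ and dividing by $S_{\mbf n}(|z_1|,\dots,|z_d|) = \sum_t |z_1|^{t_1}\cdots|z_d|^{t_d}$ exhibits the ratio in \eqref{eq:schur_positive_complex} as a convex combination (with positive weights) of the unit complex numbers $e^{i\phi_t}$. Choosing $\epsilon = \epsilon(\mbf n, d)$ small enough that $|e^{i\phi_t} - 1| < 1/100$ uniformly in $t$ immediately yields the desired estimate.

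For the reverse triangle inequality, the previous step gives $|e^{-i|\mbf n|\theta} S_{\mbf n}(z_1,\dots,z_d)| \ge \tfrac{99}{100}\, S_{\mbf n}(|z_1|,\dots,|z_d|)$, and since $|e^{-i|\mbf n|\theta}| = 1$ this is exactly $|S_{\mbf n}(z_1,\dots,z_d)| \gtrsim_{\mbf n, d} \sum_{t} |z^t|$, as claimed.

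I do not expect a real obstacle here: the only subtlety is noticing that homogeneity lets one pull out a single global phase $e^{i|\mbf n|\theta}$ independent of the tableau $t$, which is precisely what allows the positive-coefficient reverse triangle inequality argument to succeed. The same argument would fail for a general homogeneous polynomial with signed coefficients, and it would also fail if one did not first normalize by the global phase $e^{i|\mbf n|\theta}$.
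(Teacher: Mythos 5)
Your proof is correct and follows essentially the same approach as the paper: use homogeneity of degree $|\mbf n|$ to factor out the global phase $e^{i|\mbf n|\theta}$, then use positivity of the coefficients to reduce to controlling the phase of each monomial. Your convex-combination observation is a small cosmetic improvement over the paper, which instead bounds each monomial's deviation by $1/(100K)$ with $K$ the sum of the coefficients before summing.
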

    
    \begin{proof}
        First note that $S_n(e^{i\theta}z_1, \dots ,e^{i\theta}z_d) = e^{i|\mbf n|\theta} S_n(z_1, \dots ,z_d)$. Therefore estimate \eqref{eq:schur_positive_complex} is rotationally invariant and we can assume $\theta=0$. Then note that $S_{\mbf n}$ is a positive combination of monomials, and therefore it suffices to show that for any monomial $m$ appearing in $S_{\mbf n}$ we have that $$\left|\frac{m (z_1, \dots, z_d)}{m(|z_1|, \dots, |z_d|)} -1 \right |< \frac 1 {100K}$$ for $K$ equal to the sum of all the coefficietns of the monomials of $S_{\mbf n}$. Since the monomial $m$ has degree $|n|$, for some $\tau \in [-\epsilon |n|, \epsilon |n|]$ we have $\frac{m (z_1, \dots, z_d)}{m(|z_1|, \dots, |z_d|)} = e^{i\tau}$. Making $\epsilon>0$ small enough finishes the proof. 
    \end{proof}

	The lemma above will be used in multiple times in the sequel, and so will the intuition behind the lemma: an important tool to control a quotient of two polynomials is to show that the terms on the denominator don't have much cancellation. Lemma \ref{lem:det_is_limit} and the fact that Schur polynomials have positive coefficients (in the form of Lemma \ref{lem:schur_positive_complex}) are all we need to show the theorem for the generalized moment curve: 

	\begin{proof}
		[Proof (of \eqref{eq:hard_jacobian_estimate}, generalized moment curve case).]

		Let $\mu$ be a generalized moment curve of exponents $\mbf n$. Decompose $\mathbb C = \bigcup W_i$ into finitely many sectors $W_i = \{z: |\arg z - \theta_i|<\epsilon \}$ of aperture $\epsilon$ small enough (depending on the exponents). Now, for $\mbf z = (\mbf z_1, \dots, \mbf z_d) \in W_i^d$

		\begin{equation}
			 |S_{\bf n}(\mbf z)| 
			 \gtrsim_{\mbf n}
			 |\mbf z_1\cdot \mbf z_2 \cdot \dots \cdot \mbf z_d|^{\frac{\deg S_{\bf n}}{d}}
			 = K_{\bf n} \left|\prod_{i=1}^d S_{\bf n}(\mbf z_i, \dots , \mbf z_i) \right|^{1/d}
		\end{equation}
		
		for some $K_{\mbf n}>0$. The first inequality is AM-GM inequality for all the monomials of $S_{\bf n}(\mbf z)$. The second equality follows from the fact that $ S_{\mbf n}(\mbf z_i, \dots , \mbf z_i) = C_{\mbf n} \mbf z_i^{{\deg S_{\mbf n}}}$ for some positive integer $C_{\mbf n}$. Now the result follows from equation \eqref{eq:det_is_schur} from Lemma \ref{lem:det_is_limit}.
	\end{proof}

	Lemma \ref{lem:det_is_limit} leads to the definition of a new differential form that corrects for the Vandermonde factor:

	\begin{defi}
		[Corrected multilinear form]
		For $\gamma:\mathbb C \to \mathbb C^n$ and $\mbf z \in \mathbb C^n$ we define:
		\begin{equation}
			\tilde \Lambda_\gamma(\mbf z) = 
			\frac
			{\Lambda_\gamma(\mbf z) }
			{v(\mbf z) }
		\end{equation}
		moreover, (as we shall see in the following section) the map ${\tilde\Lambda}_{\cdot}(\cdot)$ is continuous in its domain $ \mathbb C^d \times P_N(\mathbb C)^d$.
	\end{defi}

    \begin{example}
    Going back to the moment curve in dimension 3 (see Example \ref{ex:differential_forms}), we see that $$\tilde \Lambda_\mu(z_1,z_2) = 2d e_1 \wedge d e_2 + 6  z_3 z_2 d e_2 \wedge d e_3 -3 (z_1+ z_2) d e_3\wedge d e_1$$ (which, for $z_1=z_2$ happens to be equal to $\Lambda^{(2)}_\mu(z)$, as expected from Lemma \ref{lem:det_is_limit} with the multiplicative $C_2 = \frac 1 {0!1!} = 1$).
    
    The next relevant example (see Example \ref{ex:differential_forms} again) is the case of generalized moment curves $\mu_{\mbf n}$. In that case the coefficients of $\tilde \Lambda_{\mu_{\mbf n}}(z_1, \dots, z_k)$ are Schur polynomials. In particular, the coefficient associated to the basis element $e_{j_1}\wedge\dots \wedge e_{j_k}$ is (up to a constant $\prod_{i=1}^ k \mbf n_{j_i}$ arising from the differentiating the monomials $z_i^{\mbf n_i}$) the Schur polynomial $S_{\delta(\mbf n_{j_1},\dots , \mbf n_{j_k})}$.
    \end{example}

	The first application of this fact is an extra property of the generalized moment curve that will be used later in this section, that we will prove now:

	\begin{lemma}
		[Transversality of the corrected multilinear form for moment curves]
		\label{lem:transversality_corrected} Let $\mu$ be a moment curve, and let $W$ be a sector of $\mathbb C$ of aperture $\epsilon>0$ (depending on $\mu$) small enough. Let $\{\mbf w^{(j)}\}_{j=1}^{\infty} $ be a sequence of elements ${\bf w}^{(k)} = ({\bf w}_1^{(j)}, \dots, {\bf w}_s^{(j)})$ in $ W^s$, let $ \{\mbf z^{(j)} \}_{j=1}^{\infty}$ a sequence in $ W^t$, with $k:=s+t\le d$, assume $|\mbf z^{(j)}_i| = O(1)$, and $\mbf w^{(j)} \to 0$.

		\begin{equation}
		\label{eq:transversality}
			\|\tilde \Lambda_\mu (\mbf z^{(j)}_1, \dots, \mbf z^{(j)}_t, \mbf w^{(j)}_1 ,\dots, \mbf w^{(j)}_s) \| \approx_\mu 
			\|\tilde \Lambda_\mu (\mbf z^{(j)}_1, \dots, \mbf z^{(j)}_t)\|\|\tilde \Lambda_\mu( \mbf w^{(j)}_1, \dots, \mbf w^{(j)}_s) \|
		\end{equation}
		as $j \to \infty$.
	\end{lemma}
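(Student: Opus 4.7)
The plan is to exploit the factorization
\begin{equation*}
\tilde\Lambda_\mu(\mbf z,\mbf w) \;=\; \frac{\tilde\Lambda_\mu(\mbf z)\wedge\tilde\Lambda_\mu(\mbf w)}{\prod_{i,j}(\mbf z_i-\mbf w_j)},
\end{equation*}
which follows from the identity $\Lambda_\mu(\mbf z,\mbf w)=\Lambda_\mu(\mbf z)\wedge\Lambda_\mu(\mbf w)$ together with the Vandermonde splitting $v(\mbf z,\mbf w) = v(\mbf z)\,v(\mbf w)\prod_{i,j}(\mbf z_i-\mbf w_j)$, and then to pass to the limit $j\to\infty$ via a compactness-and-continuity argument using the continuous extension of $\tilde\Lambda_\mu$ provided by Lemma \ref{lem:det_is_limit}.

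By Lemma \ref{lem:det_is_limit} and its remark, $\tilde\Lambda_\mu$ extends continuously to its ambient domain, so $\tilde\Lambda_\mu(\mbf w^{(j)})$ converges as $\mbf w^{(j)}\to 0$ to $\tilde\Lambda_\mu(0)=C_s^{-1}\Lambda_\mu^{(s)}(0)$, which for the moment curve is a nonzero multiple of $e_1\wedge\cdots\wedge e_s$. Since $|\mbf z^{(j)}_i|=O(1)$ and the sector is closed in $\mathbb C$, after passing to a subsequence we may assume $\mbf z^{(j)}\to\mbf z^\infty$ in the compact set $\overline W^t\cap\{|\mbf z_i|\le K\}$. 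Joint continuity of $\tilde\Lambda_\mu$ then gives convergence of each of the three norms appearing in \eqref{eq:transversality}, and the ratio they define tends to
\begin{equation*}
\rho(\mbf z^\infty) \;:=\; \frac{\|\tilde\Lambda_\mu(\mbf z^\infty,0)\|}{\|\tilde\Lambda_\mu(\mbf z^\infty)\|\,\|\tilde\Lambda_\mu(0)\|}.
\end{equation*}
It therefore suffices to bound $\rho$ above and below by positive constants on this compact set. The upper bound is immediate from continuity; for the lower bound I would invoke the Schur-polynomial description from Example \ref{ex:differential_forms}: at each arity $m\in\{t,s,t+s\}$ the $e_{(1,\ldots,m)}$-component of $\tilde\Lambda_\mu$ is, for the standard moment curve, the trivial Schur polynomial $S_{(0,\ldots,0)}\equiv 1$ up to a nonzero combinatorial factor, so each of the three norms defining $\rho$ is bounded away from zero on the compact set, whence $\rho>0$ uniformly. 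A subsequence-in-a-subsequence argument then upgrades this to the asymptotic bound $\approx_\mu$ claimed.

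The main obstacle is this uniform lower bound, i.e.\ checking that the relevant extremal Schur coefficient of each of the three forms never vanishes on the compact set. For the standard moment curve it reduces to the triviality $S_{(0,\ldots,0)}\equiv 1$; for a generalized moment curve $\mu_{\mbf n}$ the analogous extremal coefficient is the nontrivial Schur polynomial $S_{\delta(\mbf n_1,\ldots,\mbf n_m)}$, which can vanish as $\mbf z\to 0$, and one would have to invoke Lemma \ref{lem:schur_positive_complex} to replace complex evaluations by evaluations on moduli and then verify that the orders of vanishing in the numerator and denominator of $\rho$ cancel correctly.
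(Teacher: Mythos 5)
There is a genuine gap, and you correctly identified where it is but did not close it. Your plan is to take limits: pass to a subsequence so that $\mbf z^{(j)}\to\mbf z^\infty$ and $\mbf w^{(j)}\to 0$, invoke continuity of $\tilde\Lambda_\mu$ to get convergence of all three norms in \eqref{eq:transversality}, and then bound the limiting ratio $\rho$ above and below. This works only if all three limits are nonzero. You assert that $\tilde\Lambda_\mu(0)=C_s^{-1}\Lambda_\mu^{(s)}(0)$ is a nonzero multiple of $e_1\wedge\cdots\wedge e_s$, but that is true only for the \emph{standard} moment curve ($\mbf n_i=i$). The lemma is invoked in Lemma~\ref{lem:degenerate_convergence} with $\mu$ a \emph{generalized} moment curve $\mu_{\mbf n}$, and for a generic $\mbf n$ the $e_{j_1}\wedge\cdots\wedge e_{j_s}$ coordinate of $\Lambda^{(s)}_{\mu_{\mbf n}}(z)$ is proportional to $z^{\sum_a(\mbf n_{j_a}-a)}$, which vanishes at $z=0$ unless $\mbf n_a=a$ for $a\le s$. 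So in the case the paper actually needs, $\tilde\Lambda_\mu(0)=0$ and $\tilde\Lambda_\mu(\mbf w^{(j)})\to 0$; the quantity $\rho$ you define is of the indeterminate form $0/0$, and no amount of compactness or continuity resolves that — you need a \emph{rate} comparison of the two vanishing quantities, not a comparison of their limits. You flagged this explicitly as ``the main obstacle'' but the argument as written does not overcome it, so the proof does not go through for the curves to which the lemma must apply.

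The paper avoids taking limits entirely. After reducing to $k=d$ by restricting coordinates, it uses Lemma~\ref{lem:schur_positive_complex} to reduce to positive real arguments (so all Schur polynomial coefficients add constructively), notes $\|\tilde\Lambda_\mu(\mbf z^{(j)})\|\sim 1$ to remove that factor, and then shows the \emph{monomial-by-monomial} domination: every monomial of a coordinate of $\tilde\Lambda_\mu(\mbf w)$ arises from a semistandard Young tableau of the smaller shape $T$, which embeds into the bottom rows of the full shape $T'$ (using strict monotonicity of the $\mbf n_i$), and any such tableau can be extended to a full-shape tableau by filling the remaining rows with the variables $w_i$, producing a dominating monomial of $\tilde\Lambda_\mu(\mbf z,\mbf w)$. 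This is precisely the rate comparison that your $\rho$-argument lacks. Your factorization identity $\tilde\Lambda_\mu(\mbf z,\mbf w)=\tilde\Lambda_\mu(\mbf z)\wedge\tilde\Lambda_\mu(\mbf w)/\prod_{i,j}(\mbf z_i-\mbf w_j)$ is correct and gives the $\lesssim$ direction cheaply (once $|\mbf z_i-\mbf w_j|\sim 1$ is noted), but it cannot give the $\gtrsim$ direction on its own, since $\|a\wedge b\|$ can be arbitrarily smaller than $\|a\|\|b\|$; the lower bound is the genuinely nontrivial content, and it needs the tableau argument (or something equivalent) to pin down the degree cancellation you allude to at the end.
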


	\begin{proof}
		The $\lesssim_\mu$ direction in \eqref{eq:transversality} follows from the fact that, for forms $\|a\wedge b\| \le \|a\|\|b\|$. 

		For the converse, let $e_{n_1} \wedge e_{n_2} \wedge \dots \wedge e_{n_k}$ be one of the co-ordinates on the LHS of \eqref{eq:transversality}. By restricting the curve to those co-ordinates, it can be assumed that $k=d$ (and the result will follow by summing for each co-ordinate). Note also that the term $\|\tilde \Lambda_\mu (z^{(k)}_1, \dots, z^{(k)}_t)\|$ is a the absolute value of a Schur polynomial with all coefficents on a sector of small aperture (and therefore is $O(1)$ by \ref{lem:schur_positive_complex}), so we can omit it in the estimates. 

		By using the Young tableau decomposition of the Schur polynomials again, it suffices to show that each monomial in any of the coordinates $\| \tilde \Lambda_\mu( w^{(k)}_1, \dots, w^{(k)}_s) \|$ is dominated by a monomial  of $\tilde \Lambda_\mu (z^{(k)}_1, \dots, z^{(k)}_t, w^{(k)}_1, \dots, w^{(k)}_s)$. By positivity (using the same arguments as in Lemma \ref{lem:schur_positive_complex}) we can assume without loss of generality that all $z_i, w_i$ are positive real numbers using the \emph{reverse triangle inequality} of \ref{lem:schur_positive_complex}.
		
		We will motivate the proof of domination at the monomial level by an example.
		\begin{example}
		Assume $t=2, s=3$, and $\gamma=(t,t^2, t^4, t^6,t^7)$. That means the associated tableaux will have shape $(0,0,1,2,2)$.
		
  		Let's look at the element $e_2\wedge e_3\wedge e_4$. The curve $\gamma$ restricted to the co-ordinates $(2,3,4)$ is $(t^2,t^4,t^6)$ that has associated tableaux $(1,2,3)$. The important fact here is that elementwise $(1,2,3)$ is bigger than the smallest three rows of the diagram for the full dimension $(0,0,1)$. Therefore, given a tableau (for example):

        $$\begin{tabular}{|c|c|c|}
        \hline
         $w_3$ & \cellcolor{Gray}  $w_2$ & \cellcolor{Gray} $w_1$\\
         \cline{1-3}
         $w_2$ \cellcolor{Gray} & \cellcolor{Gray}$w_1$ \\
         \cline{1-2}
         $w_1$ \cellcolor{Gray} \\
        \cline{1-1}
        \end{tabular}$$ 
        we can now remove the shaded squares to turn it into a $(0,0,1)$ tableau. We can extend this tableau
        to a $(0,0,1,2,2)$ tableau with an associated monomial that dominates the monomial $w_1^3 w_2^2 w_3$:
        
        $$\begin{tabular}{|c|c|}
        \hline
         $z_2$ & $z_2$\\
         \cline{1-2}
         $z_1$  & $z_1$ \\
         \cline{1-2}
         $w_3$ \\
        \cline{1-1}
        \end{tabular}$$ 
        
        \end{example}

        The general proof is analogous to the example: given two Young diagrams $T,T'$ of the same height (counting the rows of length zero) we say $T\le T'$ if each row of $T$ has at most as many elements as the row in $T'$. Let $T$ be the diagram coming from a component $e_{i_1} \wedge \dots \wedge e_{i_t}$ (with row lengths $\mbf n_{i_1}- 1, \dots \mbf n_{i_t}-t$) and let $T'$ be the bottom $t$ rows of the diagram associated to the full determinant $e_{1} \wedge \dots \wedge e_{k}$ (which has lengths $\mbf n_{1}- 1, \dots \mbf n_{t}-t$). The strict monotonicity of $n_i$ implies that $T\le T'$. Therefore there is a a restriction of tableaux in $T'$ to tableaux in $T$ by removing the extra elements. Now we can turn any tableau in $T$ to a tableau into a tableau in the full diagram with lengths $\mbf n_{1}- 1, \dots \mbf n_{k}-k$ by filling each new row $i+t$ with $w_i$.

	\end{proof}


	\subsection{Fixed polynomial case} 
	\label{sub:fixed_polynomial_case}

	This section shows that locally any polynomial can be approximated by a moment curve in such a way that the estimates can be transfered from the moment curve to the polynomial. By a compactness argument, this will allow us to conclude the estimate \eqref{eq:hard_jacobian_estimate} in Lemma \ref{lem:geometric_lemma} for single polynomials, but with a number of open sets that might depend on the polynomial. The use of compactness arguments in this type of set-up is by no means new, and is (for example) already used by Stovall in \cite[Check lemma]{stovall_uniform_2016}

	\begin{lemma}
		[Convergence to the model in the non-degenerate set-up] 
		\label{lem:nondegenerate_continuity}The function $\tilde\Lambda_{\mu}({\bf z})$ is continuous in $({\bf z},\mu)\in \mathbb C^k \times P_n(\mathbb C)^d$, where $P_n(\mathbb C)$ denotes the set of polynomials of degree $n$.
	\end{lemma}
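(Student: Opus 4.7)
The plan is to observe that, despite appearances, $\tilde\Lambda_\gamma(\mathbf{z})$ is genuinely a polynomial in $(\mathbf{z},\gamma)$; continuity then is immediate, avoiding any case analysis about whether $v(\mathbf{z})$ vanishes.

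First I would expand $\Lambda_\gamma(\mathbf{z})$ in the canonical basis of $k$-forms. The coefficient of $e_{i_1}\wedge\cdots\wedge e_{i_k}$ is
\begin{equation*}
\Lambda_\gamma(\mathbf{z})\bigr|_{e_{i_1}\wedge\cdots\wedge e_{i_k}} = \det\bigl(\gamma'_{i_a}(z_b)\bigr)_{a,b=1}^{k}.
\end{equation*}
Swapping two arguments $z_b,z_{b'}$ swaps two columns of this matrix and flips its sign, so each coefficient is an alternating polynomial in $z_1,\dots,z_k$ whose coefficients depend polynomially (in fact, linearly) on the coefficients of the components $\gamma_{i_a}$.

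Next I would invoke the classical fact that every alternating polynomial in $z_1,\dots,z_k$ is divisible, inside the polynomial ring $\mathbb{C}[z_1,\dots,z_k]$, by the Vandermonde determinant $v(\mathbf{z})=\prod_{i<j}(z_i-z_j)$, with a symmetric quotient. Applying this coefficient by coefficient, $\tilde\Lambda_\gamma(\mathbf{z})=\Lambda_\gamma(\mathbf{z})/v(\mathbf{z})$ is a $k$-form whose components are honest polynomials in $\mathbf{z}$, and the division respects the polynomial dependence on the coefficients of $\gamma$: for each fixed monomial in $\mathbf{z}$, the corresponding coefficient in $\tilde\Lambda_\gamma$ is a polynomial expression in the coefficients of $\gamma$. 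Consequently $\tilde\Lambda_\mu(\mathbf{z})$ is jointly polynomial in $(\mathbf{z},\mu)\in \mathbb{C}^k\times P_n(\mathbb{C})^d$, hence continuous.

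The only place one could stumble is the divisibility step: one needs the division by $v$ to take place at the level of polynomials, not merely pointwise. This is the routine algebraic fact that if an alternating polynomial vanishes whenever $z_i=z_j$, then each linear factor $(z_i-z_j)$ divides it, and the factors are pairwise coprime, so their product divides it as well. An alternative, more analytic route would be to use the Taylor-expansion identity proved in Lemma \ref{lem:det_is_limit} to define $\tilde\Lambda$ on the diagonal locus and then argue continuity uniformly in $\mu$; this would work but is strictly more laborious than the one-line algebraic argument above.
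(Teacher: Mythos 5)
Your proof is correct and follows the same strategy as the paper's: both show that each component of $\tilde\Lambda_\mu(\mathbf{z})$ is in fact a polynomial in $(\mathbf{z},\mu)$ because $v(\mathbf{z})$ divides the numerator in the polynomial ring, from which continuity is immediate. The only difference is the tool used to justify the divisibility: the paper invokes the Nullstellensatz (observing that the numerator vanishes on $\mathcal{Z}(v)$ and that $v$ is squarefree, so the radical of $(v)$ is $(v)$ itself), whereas you note that each component determinant is an alternating polynomial in $\mathbf{z}$ and appeal to the elementary UFD fact that alternating polynomials are divisible by the Vandermonde --- a slightly lighter-weight route to the same conclusion.
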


	\begin{proof}
		Consider both the numerator and denominator of $\tilde \Lambda_\mu({\bf z})$ as a polynomial in the components of $\mu$ and $z$. The polynomial $\Lambda_\mu(\bf z)$ on the numerator vanishes on the zero set set $\mathcal Z(v(z_1, \dots, z_k))$, and since this polynomial does not have repeated factors, $v(z_1, \dots, z_k)$ divides the numerator by the Nullstellensatz.
	\end{proof}

	This lemma implies the local version of Theorem \ref{lem:geometric_lemma} around points where the Jacobian does not degenerate:

	\begin{prop}
	\label{prop:local_nondegenerate_jacobian}
		Let $\gamma$ be a polynomial curve in $\mathbb C^d$ in canonical form at $0$, such that $\Lambda^{(d)}(0) \neq 0$. Then there is a neighborhood $B_\epsilon(0)$, with $\epsilon=\epsilon(\gamma)>0$ where \eqref{eq:hard_jacobian_estimate} holds with constant depending only on the dimension.
	\end{prop}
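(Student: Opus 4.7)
The plan is to leverage the non-degeneracy hypothesis $\Lambda^{(d)}_\gamma(0) \neq 0$ together with the continuity results from the previous subsection to show that both sides of \eqref{eq:hard_jacobian_estimate} are comparable to $|\Lambda^{(d)}_\gamma(0)|$ throughout a small (polynomial-dependent) neighborhood of the diagonal point $(0, \dots, 0) \in \mathbb C^d$.

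First, by Lemma \ref{lem:det_is_limit} applied at $s=0$, the quantity $\tilde \Lambda_\gamma(z) = J_\gamma(z)/v(z)$ extends continuously to $z=(0,\dots,0)$ with value $C_d \Lambda^{(d)}_\gamma(0)$. Combined with Lemma \ref{lem:nondegenerate_continuity}, this gives joint continuity of $|\tilde \Lambda_\gamma|$ on a full neighborhood of $(0,\dots,0)$ (including the diagonal, where $v(z)$ vanishes). Similarly, the right-hand side $\prod_{i=1}^d |\Lambda^{(d)}_\gamma(z_i)|^{1/d}$ is a continuous function of $z$ (as a product of absolute values of polynomials), equal to $|\Lambda^{(d)}_\gamma(0)|$ at $z=(0,\dots,0)$.

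Since $\Lambda^{(d)}_\gamma(0) \neq 0$, one can choose $\epsilon = \epsilon(\gamma) > 0$ small enough that for all $z \in B_\epsilon(0)^d$ one has simultaneously
$$|\tilde \Lambda_\gamma(z)| \geq \tfrac{1}{2} C_d |\Lambda^{(d)}_\gamma(0)| \quad \text{and} \quad |\Lambda^{(d)}_\gamma(z_i)| \leq 2 |\Lambda^{(d)}_\gamma(0)| \text{ for each } i.$$
Dividing these bounds yields
$$\left|\frac{J_\gamma(z)}{v(z)}\right| \geq \frac{C_d}{4} \prod_{i=1}^d |\Lambda^{(d)}_\gamma(z_i)|^{1/d},$$
which is \eqref{eq:hard_jacobian_estimate} with constant $C_d/4$ depending only on $d$.

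There is no major obstacle in this proposition. The only point that deserves emphasis is the apparent tension in the statement---$\epsilon$ depends on $\gamma$ while the implicit constant does not---which resolves because at the diagonal point the two sides of \eqref{eq:hard_jacobian_estimate} are the same quantity $|\Lambda^{(d)}_\gamma(0)|$ up to the universal factor $C_d$; the $\gamma$-dependence enters only through the modulus of continuity controlling the drift away from this common value. The real work of the section is upgrading this $\gamma$-dependent $\epsilon$ to a uniform bound on the number of pieces $M_{N,d}$, which will be handled by the compactness argument announced at the end of the discussion preceding Lemma \ref{lem:geometric_lemma}.
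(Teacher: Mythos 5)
Your proof is correct, and it rests on exactly the same ingredients as the paper's: continuity of $\tilde\Lambda_\gamma$ across the diagonal (Lemma \ref{lem:nondegenerate_continuity}) plus its value there, $C_d\Lambda^{(d)}_\gamma(0)$, from Lemma \ref{lem:det_is_limit}. The only real difference is presentational. The paper invokes affine invariance and a zoom-in family $\mathcal B_\lambda[\gamma]$ converging to the standard moment curve, so that after normalization the reference estimate reads $1\gtrsim 1$ and the claim follows from locally uniform convergence of both sides; you instead compare both sides of \eqref{eq:hard_jacobian_estimate} directly to the common value $|\Lambda^{(d)}_\gamma(0)|\neq 0$ and extract the $\gamma$-dependent $\epsilon$ from continuity alone. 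Your version is the cleaner standalone argument for this particular proposition and makes the $d$-only constant fully explicit. The paper's choice to route through zoom-ins is deliberate: the same normalization is reused (and becomes essential) in Proposition \ref{prop:local_degenerate_jacobian}, where $\Lambda^{(d)}_\gamma(0)=0$ and a direct ``both sides are near the common nonzero value'' comparison is no longer available. You flag the right follow-up issue (upgrading the $\gamma$-dependent $\epsilon$ to a uniform count $M_{N,d}$), which is indeed where the heavier compactness machinery enters.
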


	\begin{proof}
		By the affine invariance of \eqref{eq:hard_jacobian_estimate}, consider a sequence of zoom-ins (see section \ref{sub:prelim} for the definition) in the canonical form parametrized by $\lambda$ that converge to the moment curve (the sequence cannot converge to any other generalized moment curve as the determinant does not vanish at zero). Therefore, it will suffice to show that, for $\lambda$ small enough, the lemma is true for $\mathcal B_\lambda[\gamma]$, that is:

		\begin{equation}\label{eq:nondegenerate_zoomin}
			\left|\frac{J_{\mathcal B_\lambda[\gamma]}(z)}{v(z)}\right|\gtrsim_N \prod_{i=1}^d \Lambda^{(d)}_{\mathcal B_\lambda[\gamma]}(z_i)^{1/d}
		\end{equation}
		
		for $\lambda\le \epsilon$ small enough and $z_i \in B_1(0)$. By re-scaling back (undoing the zoom-in) equation \eqref{eq:nondegenerate_zoomin} will imply the inequality \eqref{eq:hard_jacobian_estimate} for $B_\epsilon(0)$

		For the moment curve (the case $\lambda\to 0$) inequality \eqref{eq:hard_jacobian_estimate} is true, and reads:

		\begin{equation}
			\tilde \Lambda_{\mu}(z_1, \dots, z_d) \gtrsim 1
		\end{equation}
		and since both sides of the inequality converge locally uniformly as $\lambda \to 0$ (the LHS by Lemma \ref{lem:nondegenerate_continuity} and the RHS because it is the $d\nth$ root of a sequence of converging polynomials), the inequality is true for $\lambda$ small enough in the zoom-in.
	\end{proof}

	For the degenerate points where the Jacobian vanishes a similar, but slightly more technical approach gives the same result.

	\begin{lemma}
		[Convergence to the model case in the degenerate set-up with zoom-in] 
		\label{lem:degenerate_convergence}
		Let $W \subseteq C$ be an open sector of small aperture $\epsilon(N,d)>0$ to be determined. Let ${\mbf z}_j$ be a sequence of points in $W^k$ that have norm $\lesssim 1$ and for any $\mbf z_j$ and coordinates $k\neq l$, $(\mbf z_j)_k \neq (\mbf z_j)_l$. Let $\gamma$ be a curve in canonical form at zero, and $\gamma_j:=\mathcal B_{\lambda_j}[\gamma]$, $\lambda_j \to 0$, be a sequence of polynomial curves of degree $N$ that converges to $\mu$, a generalized moment curve of exponents ${\bf n}=(n_1,\dots, n_d)$. Then:
		
		\begin{equation}
			\lim_{j\to \infty} \left(\textbf{}\frac{\Lambda_{\gamma_j}({\bf z}_j)} 
			{|\Lambda_{\mu}({\bf z}_j)|}\right) = 1
		\end{equation}
		and therefore, by the triangle inequality,
		
		\begin{equation}
			\lim_{j\to \infty} \left(\frac{\Lambda_{\gamma_j}({\bf z}_j)} 
			{|\Lambda_{\gamma_j}({\bf z}_j)|}-\frac{\Lambda_{\mu}({\bf z}_j)}
			{|\Lambda_{\mu}({\bf z}_j)|}\right) = 0
		\end{equation}
	\end{lemma}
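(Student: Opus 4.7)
The plan is to reduce the problem to the joint continuity of $\tilde\Lambda$ proven in Lemma \ref{lem:nondegenerate_continuity}. Since the coordinates of each $\mbf z_j$ are pairwise distinct, dividing numerator and denominator by $v(\mbf z_j)$ reformulates the question as whether the ratio of corrected forms $\tilde\Lambda_{\gamma_j}(\mbf z_j)/\tilde\Lambda_\mu(\mbf z_j)$ tends to $1$ componentwise. The boundedness of $\mbf z_j$ together with the convergence $\gamma_j\to \mu$ makes the pair $(\mbf z_j,\gamma_j)$ precompact in $\overline{W}^k\times P_N(\mathbb C)^d$; up to extracting a subsequence we may assume $\mbf z_j\to \mbf z^*$. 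When $\mbf z^*\neq 0$ the denominator limit $\tilde\Lambda_\mu(\mbf z^*)$ is nonzero: its components in the standard basis are, up to positive constants, Schur polynomials, and by Lemma \ref{lem:schur_positive_complex}, provided the aperture $\epsilon$ of $W$ is small enough, $|\tilde\Lambda_\mu(\mbf z^*)|$ is comparable to $\tilde\Lambda_\mu(|\mbf z^*|)$, which is strictly positive by positivity of the Schur coefficients. Joint continuity then gives the desired ratio $\to 1$.

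The only remaining case is the degenerate one $\mbf z_j\to 0$, which I would handle via a secondary zoom-in. Set $r_j := \max_i |(\mbf z_j)_i|$, $\mbf w_j := r_j^{-1}\mbf z_j$, and $\tilde\gamma_j := \mathcal B_{r_j}[\gamma_j]$. A direct computation from the definition of $\mathcal B_\lambda$ shows that, for each multi-index $I$ of length $k$,
$$\tilde\Lambda_{\mathcal B_\lambda[\eta]}(\mbf z)|_{e_I} \;=\; \lambda^{k - |\delta_I| + \binom{k}{2}}\,\tilde\Lambda_\eta(\lambda \mbf z)|_{e_I},$$
where $|\delta_I| = \sum_{l} \delta_{i_l}$. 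This scaling factor depends only on $\lambda, k$ and the exponents $\delta$, which are common to $\eta=\gamma_j$ and $\eta=\mu$ because $\gamma$ is in canonical form at zero with the same lowest-order exponents as $\mu$. Since $\mu$ is fixed by the zoom-in, the $r_j$-factors cancel exactly between numerator and denominator:
$$\frac{\tilde\Lambda_{\gamma_j}(\mbf z_j)|_{e_I}}{\tilde\Lambda_\mu(\mbf z_j)|_{e_I}} \;=\; \frac{\tilde\Lambda_{\tilde\gamma_j}(\mbf w_j)|_{e_I}}{\tilde\Lambda_\mu(\mbf w_j)|_{e_I}}.$$
Using composition of zoom-ins, $\tilde\gamma_j = \mathcal B_{r_j\lambda_j}[\gamma]$ is itself a zoom-in of $\gamma$ with scale $r_j\lambda_j\to 0$, so $\tilde\gamma_j\to \mu$, while $\mbf w_j$ now satisfies $\max_i|w_{j,i}|=1$ and therefore cannot converge to $0$. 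The argument of the first paragraph applies to this rescaled problem and yields the claim.

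A standard subsequence-of-subsequence argument upgrades these pointwise conclusions to convergence of the whole ratio, and the triangle inequality then yields the second convergence statement on the normalized forms. The main difficulty lies in the algebraic cancellation of the $\mathcal B_\lambda$-scaling factors between numerator and denominator; once that identity is verified the rest of the argument is routine compactness combined with the Schur-positivity provided by Lemma \ref{lem:schur_positive_complex}.
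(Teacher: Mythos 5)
Your first paragraph contains the key error: the claim that \(\mathbf z^*\neq 0\) forces \(\tilde\Lambda_\mu(\mathbf z^*)\neq 0\) is false for genuine generalized moment curves. The coordinates of \(\tilde\Lambda_\mu\) are Schur polynomials \(S_{\delta}\) whose partition \(\delta\) can have all parts strictly positive, in which case \(S_\delta\) is divisible by \(z_1 z_2 \cdots z_k\) and vanishes as soon as a single variable does. A concrete instance: for \(\mu(z)=(z^2,z^3)\), one computes \(\tilde\Lambda_\mu(z_1,z_2) = -6\, z_1 z_2\); taking \(\mathbf z^*=(0,1)\) gives \(\mathbf z^*\neq 0\) yet \(\tilde\Lambda_\mu(\mathbf z^*)=0\). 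So the compactness/continuity argument only disposes of the case where \emph{no} coordinate of \(\mathbf z^*\) vanishes, not the case where merely \emph{not all} vanish.

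Your rescaling by \(r_j := \max_i |(\mathbf z_j)_i|\) does not rescue this: after the zoom-in, the limit \(\mathbf w^*\) has at least one coordinate of modulus \(1\), but the remaining coordinates may still tend to zero, landing you straight back in the intermediate case you cannot yet treat. This is precisely why the paper's proof proceeds by induction on the number of coordinates tending to zero and invokes Lemma~\ref{lem:transversality_corrected}: after Laplace-expanding the determinant along the split into ``small'' and ``\(O(1)\)'' variables, the transversality estimate \eqref{eq:transversality} is what controls cancellation in the denominator and shows the sum of products does not degenerate. That ingredient is absent from your argument. Your scaling identity for \(\tilde\Lambda_{\mathcal B_\lambda[\eta]}\) is correct and handles the fully degenerate limit cleanly, and the reduction to componentwise ratios is the right move, but without a substitute for the transversality lemma the proof is incomplete.
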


	\begin{proof}
		First note that it suffices to prove that the lemma is true for a subsequence of the $(\lambda_j,\mbf z_j)$. The claim will follow if we can prove that, for any fixed coordinate $e = e_{l_1} \wedge \dots \wedge e_ {l_k}$ we have:

		\begin{equation}
			\lim_{j\to \infty} \frac{\Lambda_{\gamma_j}({\bf z}_j)|_e}
			{\Lambda_{\mu}({\bf z}_j)|_e} = 1
		\end{equation}
	using the notation $w|_e$ to denote the $e\nth$ co-ordinate of the form $w$. By restricting the problem to the co-ordinates $(e_{l_1}, \dots,  e_ {l_k})$ we may assume $k=d$, and then it suffices to show, in the same set-up of the lemma, that:
	\begin{equation}
			\lim_{j\to \infty} \frac{\tilde \Lambda_{\gamma_j}({\bf z}_j)}
			{\tilde \Lambda_{\mu}({\bf z}_j)} = 1
	\end{equation}

	We will prove this by induction. By passing to a subsequence if necessary, assume without loss of generality that $\mbf z_j$ has a limit. In the base case none of the components of ${\bf z}_j$ has limit zero. In that case, the denominator converges to a non-zero number (since the denominator is a Schur polynomial in the components of ${\bf z}_j$, which does not vanish on a small enough sector by Lemma \ref{lem:schur_positive_complex}) and the result follows. 

	Our first induction case is when all the components go to zero. In this case, by doing a further zoom-in and passing to a further subsequence if necessary, one can reduce to the case where not all the components of ${\bf z}_j$ go to zero. Thus, assume some, but not all the components of ${\bf z}_j$ go to zero.

	Without loss of generality assume it is the first $0<k'<k$ components that go to zero. Let ${\mbf z'}_j:= ((\mbf z_j)_1, \dots,  (\mbf z_j)_{k'})$ be the sequence made by the first $k'$ components of each ${\bf z}_j$, and ${\mbf z}''_j$ the sequence made by the remaining components. Then,

	\begin{equation}
		 \frac{\tilde \Lambda_{\gamma_j}({\bf z}_j)}
			{\tilde \Lambda_{\mu}({\bf z}_j)}
		=  \frac
		{ \sum_{e'\wedge e'' = e}\tilde \Lambda_{\gamma_j}({\bf z}'_j)|_{e'}
			\cdot
		\tilde \Lambda_{\gamma_j}({\bf z}''_j)|_{e''}}
		{ \sum_{e'\wedge e'' = e}\tilde \Lambda_{\mu}({\bf z}'_j)|_{e'}
			\cdot
		\tilde \Lambda_{\mu}({\bf z}''_j)|_{e''}}
	\end{equation}
	we know by the induction hypothesis that each of the terms in the sum in the numerator converges to the corresponding term in the denominator (in the sense that their quotient goes to $1$). So the result will follow if we can prove there is not much cancellation going on on the denominator, that is:

	\begin{equation}
		\limsup_{j \to \infty }
		\frac 
		{ \sum_{e'\wedge e'' = e} |\tilde \Lambda_{\mu}(\mbf z'_j)|_{e'}
		\cdot
		\tilde \Lambda_{\mu}(\mbf z''_j)|_{e''}|} {|\tilde \Lambda_{\mu}({\mbf z}_j)|_e|}<\infty
	\end{equation}
	but this is a consequence of Lemma \ref{lem:transversality_corrected}, because we can bound each of the elements in the sum by 
	$|\tilde \Lambda_{\mu}({\bf z'}_j)|
		\cdot
		|\tilde \Lambda_{\mu}({\bf z''}_j)| \lesssim {|\tilde \Lambda_{\mu}({\bf z}_j)|} $, by equation \eqref{eq:transversality}.

	\end{proof}

	We can also see this lemma in an open-set set-up. Following the same proof as in Proposition \ref{prop:local_nondegenerate_jacobian}, we can prove

	\begin{prop}
	\label{prop:local_degenerate_jacobian}
		Let $\gamma$ be a polynomial curve in $\mathbb C^d$, such that $\Lambda^{(d)}(0) = 0$. Then there is a neighborhood $B_\epsilon(0)$, with $\epsilon=\epsilon(\gamma)$ where \eqref{eq:hard_jacobian_estimate} holds with constant depending only on the dimension.
	\end{prop}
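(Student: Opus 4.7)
The plan is to mirror the proof of Proposition \ref{prop:local_nondegenerate_jacobian}, replacing Lemma \ref{lem:nondegenerate_continuity} by Lemma \ref{lem:degenerate_convergence} to accommodate the degeneracy $\Lambda^{(d)}_\gamma(0)=0$. First I put $\gamma$ in canonical form at $0$ (possible by an affine change, which preserves \eqref{eq:hard_jacobian_estimate}), and let $\mu_{\mbf n}$ denote the generalized moment curve of exponents $\mbf n$ coming from the leading monomials of $\gamma$, so that $\mathcal B_\lambda[\gamma]\to \mu_{\mbf n}$ as $\lambda\to 0$. The hypothesis $\Lambda^{(d)}_\gamma(0)=0$ forces $\mu_{\mbf n}$ to be strictly generalized (not the standard moment curve). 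Since the model estimate for $\mu_{\mbf n}$ established in section \ref{sub:model_case_generealized_moment_curve} holds sector by sector and genuinely fails across sectors (as seen from $(z,z^3)$ at $z_1=-z_2$), I fix an open sector $W$ of aperture small enough for both Lemma \ref{lem:schur_positive_complex} and Lemma \ref{lem:degenerate_convergence}, and aim to prove \eqref{eq:hard_jacobian_estimate} on $(W\cap B_\epsilon(0))^d$; the neighborhood statement of the proposition is then obtained by covering $\mathbb C\setminus\{0\}$ by finitely many such sectors.

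By the affine invariance of \eqref{eq:hard_jacobian_estimate} under $\mathcal B_\lambda$, the claim reduces to the following: for all $\lambda$ sufficiently small, \eqref{eq:hard_jacobian_estimate} holds for $\gamma_\lambda:=\mathcal B_\lambda[\gamma]$ uniformly on $(W\cap B_1(0))^d$ with constant depending only on $N,d$. I argue by contradiction; were it to fail, there would exist sequences $\lambda_j\to 0$ and $\mbf z^{(j)}\in (W\cap B_1(0))^d$ on which the ratio of the two sides of \eqref{eq:hard_jacobian_estimate} for $\gamma_j:=\gamma_{\lambda_j}$ tends to $0$. After passing to a convergent subsequence, Lemma \ref{lem:degenerate_convergence} applied at arity $d$ gives $|\tilde\Lambda_{\gamma_j}(\mbf z^{(j)})|\sim |\tilde\Lambda_{\mu_{\mbf n}}(\mbf z^{(j)})|$, and applied at arity $1$ coordinate by coordinate (see the remark following that lemma) gives $|\Lambda^{(d)}_{\gamma_j}(z^{(j)}_k)|\sim |\Lambda^{(d)}_{\mu_{\mbf n}}(z^{(j)}_k)|$ for each $k$. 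Combined with the model estimate \eqref{eq:hard_jacobian_estimate} for $\mu_{\mbf n}$ from section \ref{sub:model_case_generealized_moment_curve}, this bounds the ratio for $\gamma_j$ uniformly below, yielding the contradiction.

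The hard part is that Lemma \ref{lem:degenerate_convergence} requires care when the subsequential limit of $\mbf z^{(j)}$ has some, but not all, coordinates tending to $0$: distinct coordinates can approach $0$ at wildly different rates, so the bare convergence $\gamma_j\to\mu_{\mbf n}$ in $C^\infty_{\mathrm{loc}}$ does not directly control the ratio of the (degenerating) forms $\tilde\Lambda_{\gamma_j}$ and $\tilde\Lambda_{\mu_{\mbf n}}$. This is already resolved inside Lemma \ref{lem:degenerate_convergence} by recursively zooming in on the cluster of coordinates near $0$ and splitting $\tilde\Lambda$ across the two groups of coordinates via the transversality estimate of Lemma \ref{lem:transversality_corrected}; the sectorial hypothesis on $W$ is indispensable, as it prevents cancellation in the Schur polynomial coefficients via the reverse triangle inequality of Lemma \ref{lem:schur_positive_complex}.
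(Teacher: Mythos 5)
Your argument is essentially the same as the paper's: reduce by $\mathcal B_\lambda$-affine invariance to the unit ball, invoke Lemma \ref{lem:degenerate_convergence} to transfer the ratio to the model generalized moment curve $\mu_{\mbf n}$, and conclude from the model estimate established in Section \ref{sub:model_case_generealized_moment_curve}; the paper phrases this as a uniform limit while you phrase it as a sequential contradiction, and you make explicit the sector reduction that the paper leaves implicit. One small slip: to control the denominator $\Lambda^{(d)}_{\gamma_j}(z^{(j)}_k)$ you should not invoke Lemma \ref{lem:degenerate_convergence} at ``arity $1$'' (which only controls $\gamma_j'$), but rather at arity $d$ on the diagonal, using that $\tilde\Lambda_{\gamma_j}$ extends continuously to coincident coordinates via Lemma \ref{lem:det_is_limit} so that $\Lambda^{(d)}_{\gamma_j}(z) = C_d\,\tilde\Lambda_{\gamma_j}(z,\dots,z)$; this is a one-line repair and does not affect the structure of the proof.
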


	\begin{proof}
		We have to show that there exists a zoom-in $\mathcal B_{\lambda}[\gamma]$ for $\lambda$ small enough so that the inequality 
		$$
		\left|\frac{J_{\mathcal B_{\lambda}[\gamma]}(z)}{v(z)}\right| \prod_{i=1}^d \Lambda^{(d)}_{\mathcal B_{\lambda}[\gamma]}(z_i)^{- 1/d} \gtrsim_N 1
		$$ holds in the unit ball, but Lemma \ref{lem:degenerate_convergence} implies that

		$$
		\lim_{\lambda_\to 0}
		\left|\frac{J_{\mathcal B_{\lambda}[\gamma]}(z)}{v(z)}\right| \prod_{i=1}^d \Lambda^{(d)}_{\mathcal B_{\lambda}[\gamma]}(z_i)^{- 1/d} = 
		\left|\frac{J_{\mu}(z)}{v(z)}\right| \prod_{i=1}^d \Lambda^{(d)}_{\mu}(z_i)^{- 1/d} \gtrsim 1
		$$
		where the  convergence is locally uniform, and the second inequality is inequality \eqref{eq:hard_jacobian_estimate} for the moment curve.

	\end{proof}

	This finishes the proof that \eqref{eq:hard_jacobian_estimate} if we split compact sets in a finite number of sets that may depend on the polynomial. A small variation of Lemma \ref{prop:local_degenerate_jacobian} can be used at a neighborhood of infinity. In this exposition infinity will be considered simultaneously with the uniform case instead.


	\subsection{Uniformity for polynomials} 
	\label{sub:uniformity_for_polynomials}

	The aim of this section is to show that the number of open sets in the geometric Lemma \ref{lem:geometric_lemma} does not depend on the polynomial. In order to do so, we will show that given a sequence of polynomial curves there exists a subsequence of curves for which \eqref{eq:hard_jacobian_estimate} holds with a uniformly bounded amount of subsets, and thus there must be a uniform bound for all polynomial curves.

	The main challenge in the proof of the uniformity of the number of open sets in Lemma \ref{lem:geometric_lemma} is the case in which the zeros of the Jacobian \textit{merge}, that is, the curves $\gamma_n$ converge to a curve $\gamma$ such that $J_\gamma$ has less zeros than $\gamma$ (without counting multiplicity). We will use zoom-ins near the zeros of $\gamma$ to keep track of this cases. The following lemma is a key tool to do the zoom in:

	\begin{lemma}\label{lem:annuli_continuity}
		Let $\gamma$ be a non-degenerate polynomial curve in $\mathbb C^d$ of degree $N$ such that $$\gamma_i = \prod_{k=1}^{n_j} (z - w_{i,k}) \prod_{l=1}^{m_j} \left(1 - \frac{z}{v_{i,l}}\right)$$, $v_{i,l}, w_{j,k} \in B_{R }\setminus B_{r}$, $n_1<n_2< \dots < n_d$. Then there is a constant $C := C(N, d)$ $ \epsilon:=\epsilon(N,d)>0$ such that \eqref{eq:hard_jacobian_estimate} holds on $W \cap (B_{C^{-1}R }\setminus B_{C r})$ for any sector $W$ of aperture $\le \epsilon$.
	\end{lemma}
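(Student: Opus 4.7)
The plan is to reduce \eqref{eq:hard_jacobian_estimate} on the sub-annulus to the generalized moment curve case of Section \ref{sub:model_case_generealized_moment_curve}, via compactness on a suitably normalized moduli space of polynomials. The key tool is the scaling invariance of both sides of \eqref{eq:hard_jacobian_estimate} under the zoom-in $\mathcal B_\lambda$ from Section \ref{sub:prelim}: by covering the sub-annulus with $O_{N,d}(1)$ overlapping dyadic scales $|z| \sim \rho \in [Cr, C^{-1}R]$ and rescaling each piece to the unit annulus, it suffices to prove a normalized version of the estimate on $W \cap \{1 \le |z| \le K(N,d)\}$ in which the rescaled parameters $\tilde w_{i,k} = w_{i,k}/\rho$, $\tilde v_{i,l} = v_{i,l}/\rho$ live in a bounded annular region of the plane.

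For each fixed combinatorial type $(\mbf n, \mbf m)$ (of which there are $O_{N,d}(1)$), the space of admissible normalized polynomials is parameterized by the positions of the finitely many $\tilde w_{i,k}, \tilde v_{i,l}$. I would compactify this parameter space by adjoining the limit strata in which collections of parameters either coalesce, or simultaneously escape to $0$ along a common scale $\lambda \to 0$, or simultaneously escape to $\infty$. On this compactification, define
\[
F(\gamma,\mbf z) := \frac{|J_\gamma(\mbf z)/v(\mbf z)|}{\prod_{i=1}^d |\Lambda_\gamma^{(d)}(z_i)|^{1/d}}, \qquad \mbf z \in W^d.
\]
Continuity of $F$ across coincident coordinates in $\mbf z$ is Lemma \ref{lem:nondegenerate_continuity}. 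Continuity across each boundary stratum of parameter space is Lemma \ref{lem:degenerate_convergence}: after applying the appropriate zoom-in to isolate an escaping cluster, $\gamma$ converges to a generalized moment curve $\mu_{\mbf n'}$, and both numerator and denominator of $F$ transform compatibly under the zoom-in.

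At every boundary point the limit value of $F$ coincides with the corresponding ratio for a generalized moment curve, which is bounded below on $W^d$ by the Schur-positivity argument of Section \ref{sub:model_case_generealized_moment_curve}, provided the aperture $\epsilon = \epsilon(N,d)$ of $W$ is chosen so that Lemma \ref{lem:schur_positive_complex} applies to the finite list of exponents $\mbf n'$ that can arise as such limits. Continuity on a compact set together with strict positivity on the boundary then yields $F \gtrsim_{N,d} 1$ uniformly on the interior, which is precisely \eqref{eq:hard_jacobian_estimate}.

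The hardest step will be the boundary analysis: many different limit strata arise, indexed by which subsets of parameters simultaneously degenerate and at what relative rates, and in each stratum the limiting moment curve has its own exponent $\mbf n'$. Handling all of them uniformly will require a finite induction on the number of degenerating parameter clusters, paralleling the induction inside Lemma \ref{lem:degenerate_convergence}, with Lemma \ref{lem:transversality_corrected} invoked at each step to rule out cancellation between the slow and fast clusters and guarantee that $F$ has a strictly positive limit.
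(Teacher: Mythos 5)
Your overall strategy — reduce to moment curves via the zoom-in machinery and run a compactness argument using Lemmas \ref{lem:degenerate_convergence}, \ref{lem:transversality_corrected} and \ref{lem:schur_positive_complex} — is the same one the paper uses. The paper implements the compactness sequentially, restating the lemma as the convergence statement Lemma \ref{lem:annuli_convergence} and then arguing that the induction from the proof of Lemma \ref{lem:degenerate_convergence} goes through verbatim once one notes that every blow-up stays at a scale smaller than the zeros of $\gamma_n$; your explicit compactification of the normalized parameter space is just a topological repackaging of that.

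There is, however, a genuine gap in your reduction step. You claim that after rescaling it ``suffices to prove a normalized version of the estimate on $W \cap \{1 \le |z| \le K(N,d)\}$,'' i.e.\ for tuples $\mbf z$ whose components all lie in a fixed bounded annulus. But \eqref{eq:hard_jacobian_estimate} is a statement about \emph{tuples} $\mbf z=(z_1,\dots,z_d)\in\bigl(W \cap (B_{C^{-1}R}\setminus B_{Cr})\bigr)^d$, and these tuples may have components at wildly different scales within $[Cr, C^{-1}R]$; a single dilation by $\rho$ cannot bring all of them into a bounded annulus. The paper's Lemma \ref{lem:annuli_convergence} is stated precisely to accommodate this: there $\mbf z_n$ ranges over $(A_n\cap W)^k$ with $A_n = B_1\setminus B_{r_n}$ and $r_n\to 0$, so the component scales can spread out as $n\to\infty$. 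Your proposed compactification is of the \emph{polynomial parameter} space only, and the continuity you invoke for $\mbf z$ (Lemma \ref{lem:nondegenerate_continuity}) only addresses coincident coordinates, not components of $\mbf z$ escaping to $0$ or $\infty$ relative to one another. Relatedly, in your final paragraph you index the induction by ``degenerating parameter clusters,'' but the induction in Lemma \ref{lem:degenerate_convergence} is on how many components of $\mbf z$ go to zero — a different thing — and Lemma \ref{lem:transversality_corrected} controls cancellation between \emph{components of $\mbf z$} at separated scales, not between parameter clusters. To repair the argument you would need to compactify jointly in the polynomial parameters and in $\mbf z$ (or, equivalently, drop the bounded-annulus normalization of $\mbf z$ and run the $\mbf z$-induction from Lemma \ref{lem:degenerate_convergence} alongside the parameter degenerations), which is effectively what the paper's formulation via Lemma \ref{lem:annuli_convergence} accomplishes. (The claim that $O_{N,d}(1)$ dyadic scales suffice is also literally false — the number of dyadic scales in $[Cr,C^{-1}R]$ is $\sim\log(R/r)$ — but this is harmless since \eqref{eq:hard_jacobian_estimate} is pointwise and no summation over scales occurs.)
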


	The lemma (and its proof) can be informally stated as: ``If all the zeros of the components of $\gamma$ are far from an annuli, then $\gamma$ behaves like the corresponding moment curve in the annuli". To prove the lemma we will have to re-write it into an equivalent form, more suitable for compactness arguments:

	\begin{lemma} [Lemma \ref{lem:degenerate_convergence}, annuli verison]\label{lem:annuli_convergence}
		Let $\gamma_n\to \mu$ be a sequence of polynomial curves, and $\mu$ a nondegenerate generalized moment curve. Let $w_{(i,k),n}$, $w_{(i,k),n}$ defined as in Lemma \ref{lem:annuli_continuity}, with  $w_{(i,k),n} \to_{n\to \infty} 0$ and $v_{(i,l),n} \to_{n\to \infty} \infty$. Let $r_n$ define a sequence of annuli $A_n = B_0(1)\setminus B_0(r_n)$, so that $\max_{i,k} w_{(i,k),n} = o(r_n)$. Let $\mbf z_n \in (A_n \cap W)^k$, where $W$ is a sector of small enough aperture depending of $n,d$ only. Then:

		\begin{equation}
			\lim_{j\to \infty} \left(\textbf{}\frac{\Lambda_{\gamma_j}({\bf z}_j)} 
			{|\Lambda_{\mu}({\bf z}_j)|}\right) = 1
		\end{equation}
		and therefore, by the triangle inequality,
		
		\begin{equation}
			\lim_{j\to \infty} \left(\frac{\Lambda_{\gamma_j}({\bf z}_j)} 
			{|\Lambda_{\gamma_j}({\bf z}_j)|}-\frac{\Lambda_{\mu}({\bf z}_j)}
			{|\Lambda_{\mu}({\bf z}_j)|}\right) = 0
		\end{equation}
	\end{lemma}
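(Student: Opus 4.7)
The approach is to reduce the statement to Lemma \ref{lem:degenerate_convergence} by rescaling the picture to the natural scale of the points $\mbf z_n$ inside the annulus. After passing to a subsequence, let $\rho_n := \max_i |(\mbf z_n)_i| \in [r_n, 1]$ and set $\tilde {\mbf z}_n := \mbf z_n/\rho_n$, so that $\tilde{\mbf z}_n \in W^k$ with all coordinates of modulus at most $1$ and at least one of modulus exactly $1$. If $\rho_n$ stays bounded below by some $\rho_\infty > 0$, then $\mbf z_n$ eventually lies in a fixed compact subset of $(W \cap (B_0(1) \setminus \{0\}))^k$, on which $\gamma_n \to \mu$ uniformly together with all derivatives; since the coefficients of $\tilde\Lambda_\mu$ are Schur polynomials, Lemma \ref{lem:schur_positive_complex} gives $|\tilde\Lambda_\mu(\mbf z_n)|$ bounded below on this compact set, so the ratio tends to $1$ by continuity.

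The main case is $\rho_n \to 0$. Define the zoom-in $\tilde\gamma_n(z) := \operatorname{diag}(\rho_n^{-n_1},\ldots,\rho_n^{-n_d})\,\gamma_n(\rho_n z)$; using the product formula, each component becomes
\begin{equation*}
\tilde\gamma_{n,i}(z) = \prod_{k=1}^{n_i}\Bigl(z - \tfrac{w_{(i,k),n}}{\rho_n}\Bigr) \prod_{l=1}^{m_i}\Bigl(1 - \tfrac{\rho_n z}{v_{(i,l),n}}\Bigr).
\end{equation*}
The hypotheses $\max_{i,k}|w_{(i,k),n}| = o(r_n) = o(\rho_n)$ and $|v_{(i,l),n}| \to \infty$, together with $\rho_n \le 1$, force every factor to converge to $z$ or $1$ uniformly on compact subsets of $\mathbb C$, so $\tilde\gamma_n \to \mu$ locally uniformly along with all derivatives. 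A direct chain-rule computation shows that on each basis element $e_I = e_{l_1} \wedge \cdots \wedge e_{l_k}$,
\begin{equation*}
\tilde\Lambda_{\gamma_n}(\rho_n \tilde {\mbf z}_n)\big|_{e_I} = \rho_n^{\,n_{l_1}+\cdots+n_{l_k} - k - k(k-1)/2}\; \tilde\Lambda_{\tilde\gamma_n}(\tilde{\mbf z}_n)\big|_{e_I},
\end{equation*}
and the identical identity holds with $\mu$ in place of $\gamma_n$ (since $\mu$ is its own zoom-in). The scaling factor therefore cancels in the ratio, giving $\tilde\Lambda_{\gamma_n}(\mbf z_n)|_{e_I}/\tilde\Lambda_\mu(\mbf z_n)|_{e_I} = \tilde\Lambda_{\tilde\gamma_n}(\tilde{\mbf z}_n)|_{e_I}/\tilde\Lambda_\mu(\tilde {\mbf z}_n)|_{e_I}$.

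This last ratio is precisely what Lemma \ref{lem:degenerate_convergence} controls: bounded-degree polynomials $\tilde\gamma_n$ converging to the generalized moment curve $\mu$, evaluated at points $\tilde {\mbf z}_n$ of bounded norm inside the narrow sector $W$. The proof there now applies verbatim: reduce coordinate-wise to a single basis element, further reducing to $k=d$; induct on the number of coordinates of $\tilde {\mbf z}_n$ shrinking to $0$; the base case is handled by Lemma \ref{lem:schur_positive_complex}, and the inductive step uses the wedge decomposition together with the transversality bound of Lemma \ref{lem:transversality_corrected} to rule out cancellation in the denominator. The main obstacle is the bookkeeping of this rescaling: identifying $\rho_n$ as the correct scale, verifying that the small zeros $w_{(i,k),n}$ are absorbed while the large zeros $v_{(i,l),n}$ remain far, and checking that the induced scaling of $\tilde\Lambda$ cancels identically between numerator and denominator. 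Once those points are in place, all of the analytic substance has already been supplied by Lemma \ref{lem:degenerate_convergence}.
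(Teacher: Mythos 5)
Your proposal follows essentially the same route as the paper's proof, which is itself extremely terse ("The proof is the same as the proof in Lemma~\ref{lem:degenerate_convergence}... The key difference being the reason why we can zoom in again"). You have expanded that remark into an explicit rescaling argument: you identify the natural scale $\rho_n = \max_i |(\mbf z_n)_i|$, write out how the zoom-in transforms the root-product form of $\gamma_n$ (so that $|w_{(i,k),n}/\rho_n| \to 0$ and $|\rho_n z / v_{(i,l),n}| \to 0$, hence $\tilde\gamma_n \to \mu$), and verify the scaling identity that makes the ratio invariant under the zoom, which is exactly the paper's point that "the control of $r_n$ ensures the blow-up will be always at a smaller scale than the scale at which the zeros of $\gamma_n$ are." One small caveat: your phrase "the proof there now applies verbatim" is slightly optimistic, since the inductive step of Lemma~\ref{lem:degenerate_convergence} may require a \emph{further} zoom on the subset of components that still tend to $0$, and you should note (as the paper does implicitly) that the annuli hypothesis $|\mbf z_n| \ge r_n$, $\max_{i,k}|w_{(i,k),n}| = o(r_n)$ is preserved under each such re-zoom, so the induction can be carried through with the same justification; you gesture at this with "verifying that the small zeros... are absorbed while the large zeros remain far," but it is worth making that propagation explicit.
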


	\begin{proof}
		The proof is the same as the proof in Lemma \ref{lem:degenerate_convergence}. The key difference being the reason why we can zoom in again. In Lemma \ref{lem:degenerate_convergence} the $\gamma_n$ were themselves blow-ups, so blowing up did not change the hypothesis of the lemma. Here, the control of $r_n$ ensures the blow-up will be always at a smaller scale than the scale at which the zeros of $\gamma_n$ are.
	\end{proof}

	\begin{remark}
		Note that in the particular case in which there are no $v_{(i,l),n}$ (that is, all the zeros are going to zero) the annuli can be taken to have exterior radius equal to infinity (that is, the annuli can degenerate to the complement of a disk) or, in the case where all the $w_{(i,k),n}$ are exactly equal to zero, it can be taken to have interior radius equal to 0.
	\end{remark}

	Lemma \ref{lem:annuli_continuity} that we just proved (in its convergence form) lets us control  $J_\gamma$ far from the zeros of the components of $\gamma$. The zeros of the components, however, depend on the co-ordinates we take. In order to solve this, we will show that there we can find a co-ordinate system in which, if we have a zero of a co-ordinate of $\gamma$ that has size $O(1)$ then there is also a zero of $\mathcal J_{\gamma}$ that has size $O(1)$. The following example showcases the relevance of this fact:

    \begin{example}
    Let $\gamma_n = (z, z^3+\delta_n z + \epsilon_n z^2+z^3,z^4+\epsilon z^ 6)$, where $\delta_n, \epsilon_n$ go to zero, but $\epsilon_n$ goes to zero way faster than $\delta_n$. Clearly $\gamma_n \to (z,z^3, z^4)$, and our goal is to find the smallest radius $r_n$ for which we can prove \eqref{eq:hard_jacobian_estimate} outside of $B_{r_n}$. 
    
    The zeros of the first coordinate $z$ are all at zero. The first zero of the second coordinate are at scale $\delta_n^{1/2}$, and the first zero of the third co-ordinate is around ${\epsilon_n}$. Lemma\ref{lem:annuli_continuity} states that it suffices to remove balls of radius $O(\delta_n^{1/2})$. 
    
    The optimal scale at which we should be able to zoom before starting to see the difference between $\gamma_n$ is the scale at which the separation between the zeros of $\Lambda_{\gamma_n}$ is. After a zoom-in at scale $\delta_n^{1/2}$ we obtain the curve $$\tilde \gamma_n(z) = (z, z + \delta_n^{-\frac 1 2} \epsilon_n z^2, z^4+\epsilon_n \delta_n^{-1} z^6).$$ After a change of co-ordinates, we can remove the $t$ monomial on the second co-ordinate, to obtain a new sequence of equivalent curves $$\tilde \gamma(z) = (z, \delta_n^{-\frac 1 2} \epsilon_n z^2+z^3, z^4+\epsilon_n \delta_n^{-1} z^6).$$ This shows that another zoom-in at scale $\delta_n^{-1/2}\epsilon_n$ is actually possible (and therefore that the first zoom-in was in fact not as efficient as possible)

    Lemmas \ref{lem:honest_zeros} and \ref{lem:honest_helper} state that in order to identify the optimal scale at which a curve $\gamma$ stops behaving like a moment curve one must first do a small (close to the identity) change of co-ordinates. The change of co-ordinates is done by eliminating the lower order coefficients that can be eliminated using other co-ordinates. In this case, the second co-ordinate $t^3+\delta_n t + \epsilon_n t^2$ has a term $\delta_n t$ that can be eliminated. A systematic way of doing so (described in Lemma \ref{lem:honest_helper}) is by building a matrix with the coefficients of the polynomials and applying row reductions. In our case the associated matrix would be:
    $$
    \left [
    \begin{matrix}
    1&0&0&0&0&0 \\
    \delta_n & \epsilon_n & 1 &0&0&0\\
    0&0&0&1 &0 & \epsilon_n
    \end{matrix}
    \right]
    $$
    and row-reducing the columns with the indices that lead to the moment curve (in general, one should row-reduce the bold-face positions) brings us to the matrix
    $$
    \left [
    \begin{matrix}
    1&0&\mbf 0&\mbf 0&0&0 \\
    \mbf 0 & \epsilon_n & 1 &\mbf 0&0&0\\
    \mbf 0&0&\mbf 0&1 &0 & \epsilon_n
    \end{matrix}
    \right]
    $$
    or equivalently the curve $\gamma_n = (t, t^3 + \epsilon_n t^2,t^4+\epsilon_n t^ 6)$, that allows for a blow-up at the $O(\epsilon_n)$ scale by Lemma \ref{lem:honest_zeros}.
    \end{example}

	\begin{lemma}[Honest zeros lemma]
	\label{lem:honest_zeros}
		For a non-degenerate polynomial curve $\gamma$, let $R(\gamma)$ be the (absolute value of the) supremum of the zeros of $J_\gamma$ that has absolute value smaller than 1. Let $r_\gamma$ be the supremum of the zeros of the co-ordinates of $\gamma$ (again in absolute value, and counting only the zeros that have absolute value less than 1).

		Then, for any sequence of polynomial curves $\gamma_n \to \mu$, a non-degenerate generalized moment curve, there is a constant $k := k(\gamma_n)$, a sequence of linear operators $L_n \in GL(n;\mathbb C)$ converging to the identity and a sequence of constants $c_n\to 0$ so that:

		\begin{equation}
			R(L_n \gamma_n(z-c_n)) \ge k r(L_n \gamma_n(z-c_n))
		\end{equation}

		In other words, after a suitable change of co-ordinates, controlling the zeros of a sequence of polynomial curves allows us to control the zeros of its Jacobian without significant losses.
 	\end{lemma}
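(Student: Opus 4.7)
The plan is to remove the redundant low-order monomials from the components of $\gamma_n$ via a mild linear change of coordinates plus a reparametrization, after which any zero of a component must be matched by a zero of $\Lambda^{(d)}_\gamma$ at a comparable scale. Write $\mu = (z^{n_1}, \dots, z^{n_d})$ and $\gamma_{n,i}(z) = \sum_k a^{(n)}_{i,k} z^k$, so that $a^{(n)}_{i,k}\to \delta_{k,n_i}$. I would construct $L_n$ by iterative Gaussian elimination on the coefficient matrix: for each pair $j<i$, subtract an appropriate scalar multiple of row $j$ from row $i$ to kill the coefficient of $z^{n_j}$ in the new $i$-th row. Since the scalars are all $o(1)$, the resulting $L_n$ tends to the identity. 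The translation $c_n$ absorbs one further coefficient: for instance, pick $c_n$ so that, after $z\mapsto z-c_n$, the $z^{n_1-1}$-coefficient of the first component vanishes, which forces $c_n \to 0$ by continuity.

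With these normalizations in place, I would derive the inequality by contradiction. Suppose that for no constant $k$ does the stated estimate hold; then along a subsequence $R_n / r_n \to 0$ and one can pick, for each $n$, an index $i$ and a point $z_n$ with $|z_n|=r_n$ and $(L_n \gamma_n(\cdot - c_n))_i(z_n)=0$. Form the zoom-in $\tilde\gamma_n := \mathcal B_{r_n}[L_n\gamma_n(\cdot - c_n)]$. The row reduction ensures that the only low-degree monomials surviving in coordinate $i$ are those with exponents outside $\{n_1, \dots, n_{i-1}\}$, whose coefficients are $o(1)$; this is enough to show (after passing to a further subsequence and, if necessary, performing one more close-to-identity reduction at the zoom-in scale in the spirit of Lemma \ref{lem:degenerate_convergence}) that $\tilde\gamma_n$ converges locally uniformly to a bona fide generalized moment curve $\tilde\mu$ with exponents $(n_1, \dots, n_d)$. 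The zero $z_n/r_n$ of $(\tilde\gamma_n)_i$ lies on the unit circle, so by uniform convergence its subsequential limit $z_\infty$ satisfies $|z_\infty|=1$ and $\tilde\mu_i(z_\infty)=0$. But the components of a generalized moment curve vanish only at the origin, a contradiction.

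The technical heart is the claim that, after row reduction and zoom-in, $\tilde\gamma_n$ really converges to a generalized moment curve of the \emph{same} exponent tuple rather than to a degenerate object whose coefficients have blown up. The row reduction precisely kills the monomials that would otherwise diverge under the zoom-in; any remaining low-order monomial in coordinate $i$ has exponent $k\notin \{n_1, \dots, n_{i-1}\}$, so when such a monomial is responsible for a component zero at scale $r_n$ it must also perturb $\Lambda^{(d)}_{\gamma_n}$ at a comparable scale, which is essentially the estimate we are after. Making this precise — iterating the reduction as needed within the zoom-in while keeping the composition of the $L_n$'s close to the identity, and ruling out the degenerate exponent-tuple limits — is the bookkeeping step where the structure of the reduced form is indispensable.
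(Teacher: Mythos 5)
Your setup (Gaussian elimination on the coefficient matrix producing $L_n\to Id$, and zooming in at the scale $r_n$ of the first component zero) is essentially the same normalization the paper performs. The gap is in the final step, and it is not a bookkeeping gap — it is a logical one. You claim that after row reduction the zoom-in $\tilde\gamma_n = \mathcal B_{r_n}[L_n\gamma_n(\cdot-c_n)]$ converges locally uniformly to a generalized moment curve $\tilde\mu$ with the \emph{same} exponent tuple $(n_1,\dots,n_d)$, and you then derive a contradiction from the fact that moment curve components vanish only at the origin. But by your own construction there is a point $z_n/r_n$ on the unit circle where a component of $\tilde\gamma_n$ vanishes, and that zero persists under locally uniform convergence; so if $\tilde\gamma_n$ converged to a moment curve of any exponent tuple, the construction would already be inconsistent. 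The limit is never a moment curve. Concretely, take $\gamma_n=(z,\,z^3+\epsilon_n z^2)$ with $\epsilon_n\to 0$: the row reduction does nothing, the first component zero is at scale $r_n=\epsilon_n$, and the zoom-in converges to $(z,\,z^3+z^2)$, which is not a generalized moment curve and whose second component vanishes at $z=-1$. The "one more close-to-identity reduction at the zoom-in scale" cannot repair this, because the offending monomial $z^2$ has coefficient $\Theta(1)$ in the limit and cannot be removed by a change of coordinates tending to the identity.

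What the paper does instead is not to identify the limit as a moment curve but as a general polynomial $\gamma$ whose component exponents at the origin are $\tilde n_i\le n_i$ with at least one strict inequality (this is the content of the row-reduction Lemma~\ref{lem:honest_helper}). Hence the order of vanishing of $J_\gamma$ at the origin is strictly less than that of $J_\mu$. The contradiction then comes from multiplicity counting: if $R_n/r_n\to 0$, all zeros of $J$ in the unit ball for the zoomed-in curves concentrate at the origin, so the limit $J_\gamma$ must have the \emph{same} order of vanishing at zero as $J_\mu$ — contradicting Lemma~\ref{lem:honest_helper}. To fix your argument you would need to replace the "limit is a moment curve" claim with this multiplicity comparison on the Jacobian; as written, the step you flag as the "technical heart" is not a technicality but the place where the argument, in its current form, cannot close.
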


 	\begin{proof}
 		We will assume that $\mu$ is not the standard moment curve, since otherwise the result is trivial because $J_\mu = 1$. We choose the $c_n \to 0$ to re-center the $\gamma_n$ so that $J_{\gamma_n}$ always has a zero at zero. Let $n_i$ be the degree of the $i\nth$ co-ordinate of $\mu$. By composing with suitable $L_n\to Id$ we can assume that the degree $n_i$ component of the $i\nth$ co-ordinate of $\gamma_n$ is always $1$, and the degree $n_j$ of the $i\nth$ component (for $i\neq 0$) is $0$ for all $\gamma_n$. Let $\tilde \gamma_n = L_n \gamma_n(z-c_n)$, then, the following holds:

 		\begin{lemma}
 		\label{lem:honest_helper}
 			Let $\hat\gamma_n$ be a sequence of zoom-ins to $L_n \gamma_n(z-c_n)$ at the scale where the zeros of the co-ordinates appear, and assume $\hat\gamma_n \to \gamma$. Then the multiplicity of the zero of $J_\gamma$ at zero is strictly smaller than the multiplicity of $J_\mu$ at zero.
 		\end{lemma}

 		Using Lemma \ref{lem:honest_helper} above, we can finish the proof by contradiction. Assume $ \gamma_n$ is such that $\tilde \gamma n = L_n \gamma_n(z-c_n)$ contradicts the lemma. Pick a subsequence  for which $R(\tilde\gamma_n)/r(\tilde\gamma_n)$ goes to zero. Let $\hat \gamma_n$ be a zoom-in at the scale at which the first zeros of the components of $\tilde\gamma_n$ appear. Assume, by passing to a subsequence if necesary, that $\hat \gamma_n$ converges. By the hypothesis of $R(\tilde\gamma_n)/r(\tilde\gamma_n)$, it must be that all the zeros of $J:{\hat\gamma_n}$ concentrate back at zero, but this contradicts lemma \ref{lem:honest_helper}.
 	\end{proof}
	

	\begin{proof}
		[Proof (of  Lemma \ref{lem:honest_helper}).]
		Define a matrix $M_{i,k} \in \mathcal M_{(d,N)}(\mathbb C)$ so that $M_{i,k}$ is the coefficient of degree $k$ of the $i$ component of $\gamma$. By the transformation we have done, we know the matrix $M_{i,k}$ has rank $d$, and that for the column $i$, the element $M_{i,n_i}$ is equal to $1$ and for $j>n_i$, $M_{i,n_i}$ is equal to $0$. The multiplicity of $J\mu$ at zero is $\sum n_i - \frac{d^2+d}{2}$. To compute the multiplicity of $J_\gamma$ at zero, we do the following procedure:

		Pick the first column (smallest $k$ index) that is non-zero. Pick the first element of this column (smallest $i$) index that is non-zero. Define $\tilde n_i:=k$, where $i$ is the index that is not zero. Row-reduce $M_{i,k}$ so that the $k\nth$ column is $e_i$. Set all the elements on the right of $(i,n_i)$ to zero. Repeat this process $d$ times. 

		This procedure is a row-reduction and blow-up procedure at the origin, that shows that, at the origin, the polynomial curve looks like a generalized moment curve of degrees $\tilde n_i$, where $\tilde n_i\le n_i$ with at least one of the $\tilde n_i< n_i$. Therefore, the degree of the vanishing of the Jacobian at the origin is strictly smaller.
	\end{proof}
	
	Now we have all the necessary tools to prove \eqref{eq:hard_jacobian_estimate} in  Lemma \ref{lem:geometric_lemma} in the full generality case:

\begin{figure}
    \caption{Proof by picture of estimate \eqref{eq:hard_jacobian_estimate} of Theorem \ref{lem:geometric_lemma}. The strategy of the proof is  by contradiction. By the previous sections, the only thing that remains to prove is that the number of open sets needed to cover a polynomial does not depend on the polynomial curve . Assume $\gamma_n$ is a sequence of polynomial curves of bounded degree for which the necessary number of open sets goes to infinity. The algorithm depicted in the steps above shows how to select a subsequence for which the necessary number of open sets remains bounded, reaching a contradiction.}
    \label{fig:proof by Picture}
    
    \vspace{1em}
    \begin{minipage}{.49\textwidth}
    \centering
    \small
        \textbf{Step 1:} For a suitable convergent subsequence of $\gamma_n$ (after a reparametrization if necessary) isolate all the zeros of $\Lambda^{(k)}_{\gamma_n}$ in a fixed ball $B_0$. Far from that ball, by Lemmas \ref{lem:honest_zeros} and \ref{lem:annuli_continuity} we can split the complex plane into $O(1) $ sectors where \eqref{eq:hard_jacobian_estimate} holds.
        
        \vspace{.2em}
        
        \includegraphics[width = .6\textwidth]{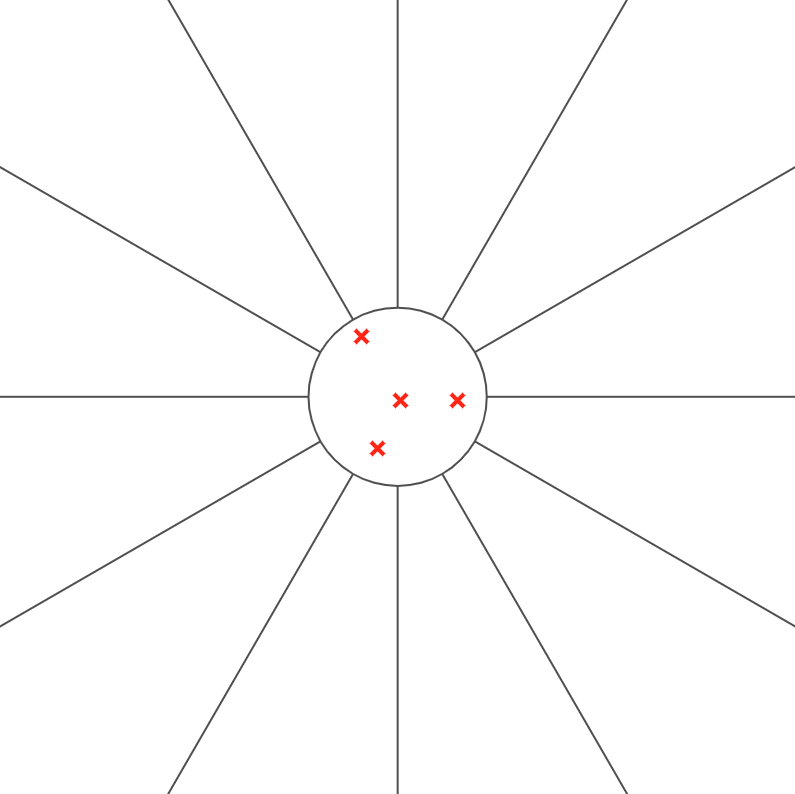}
    \end{minipage}
    \begin{minipage}{.49\textwidth}
    \centering
    \small
    
        \textbf{Step 2:} Since $\gamma_n$ converges, there is a finite number of accumulation points of the zeros of $\Lambda^{(k)}_{\gamma_n}$. By Lemma \ref{prop:local_nondegenerate_jacobian} we can split the complement of a neighbourhood $\bigcup_{i+1}^{O(1)} B_1^{(i)}$ of those accumulation points inside the ball $B_0$.
        
        \vspace{1em}
        
        \includegraphics[width = .6\textwidth]{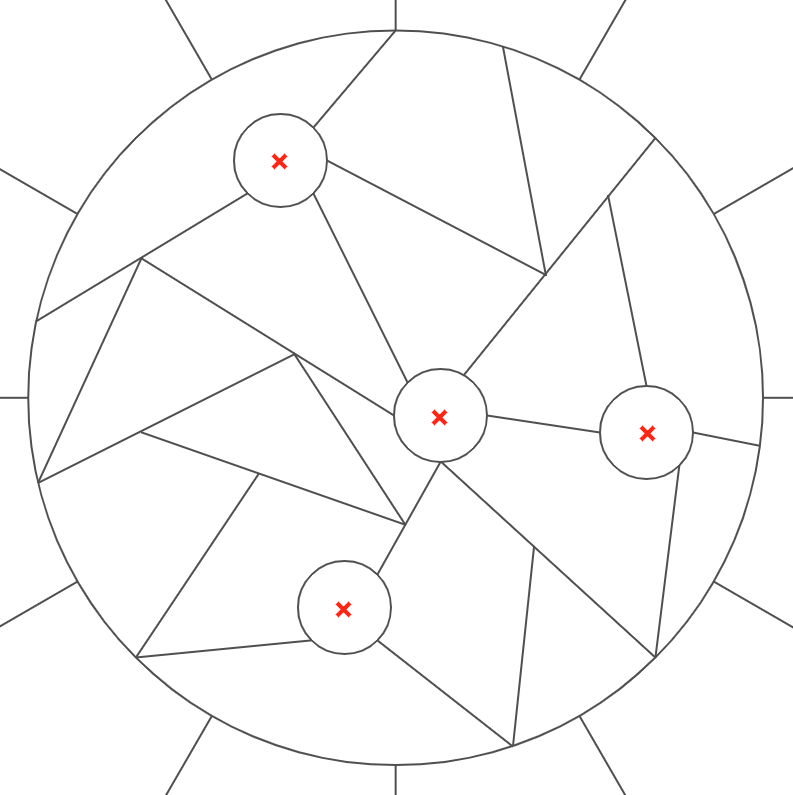}
    \end{minipage}
    
    \begin{minipage}{.49\textwidth}
    \centering
    \small
    
        \textbf{Step 3:} For each of these balls $B_1^{(i)}$, we can apply Lemma \ref{lem:annuli_continuity} again, to split the ball minus a small ball $B_{1,n}^{(i)}$ of radius $r_n$ going to zero into sectors where \eqref{eq:hard_jacobian_estimate} holds independently of $n$. Also, by a small translation, we can assume without loss of generality  that the center of $B_{1,n}^{(i)}$ is a zero of $\Lambda^{(k)}_{\gamma_n}$.
        \vspace{5em}
        
        \includegraphics[width = .6\textwidth]{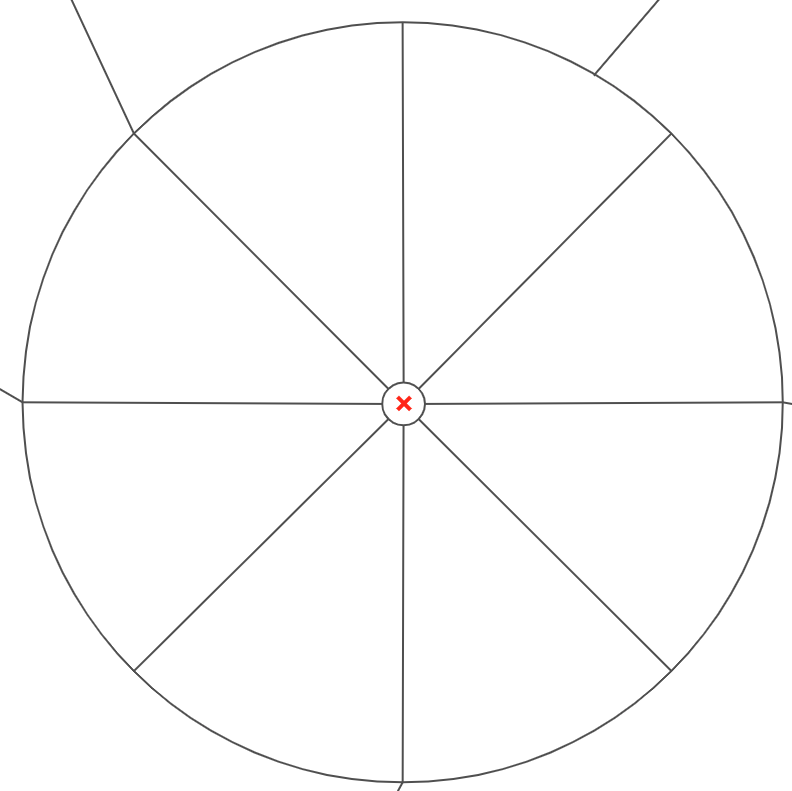}
    \end{minipage}
    \begin{minipage}{.49\textwidth}
    \centering
    \small
    
        \textbf{Step 4:} If all the $r_n$ were zero for a subsequence, by picking that subsequence we would have proven \eqref{eq:hard_jacobian_estimate} for $B_1^{(i)}$ for a given subsequence. Otherwise, zoom into the $r_i$ and pick a convergent subsequence $\tilde \gamma_n$. Now it is possible apply step 2 again. Lemma \ref{lem:honest_zeros} tells us that after zooming there will be zeros of the zoomed in Jacobian do not concentrate at zero, and therefore have lower degree (because the sum of the degrees is conserved), therefore this process will finish in finitely many steps.
    
        \includegraphics[width = .6\textwidth]{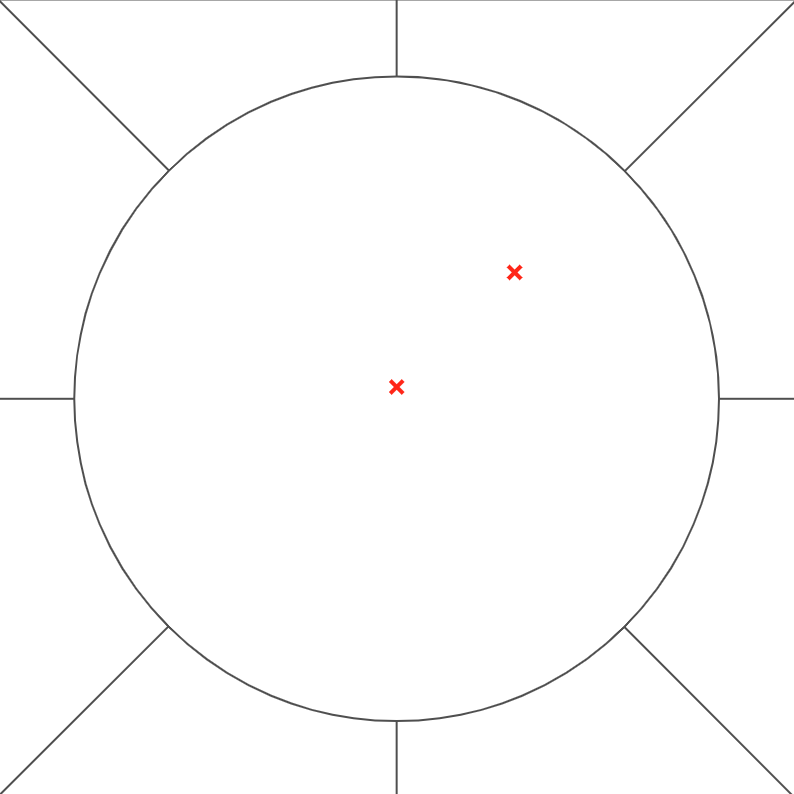}
    \end{minipage}
\end{figure}
	\begin{proof}
{${}^{}$} \\
	\begin{itemize}
		\item The proof will proceed by contradiction. Assume there is a sequence of $\gamma_n$ for which the minimum number of sets need for the geometric Lemma \ref{lem:geometric_lemma} to hold grows to infinity. The contradiction will come from showing that a certain subsequence of the $\gamma_n$ can be covered by a bounded number of subsets.
		\item By passing to a subsequence if necessary, and re-parametrizing, we will assume that the $\gamma_n$ converge to a non-degenerate generalized moment curve $\hat \gamma$, and that all the zeros of $J_{\gamma_n}$ converge to the origin, with one zero of $J_{\gamma_n}$ being exactly at the origin.
		\item By Lemma \ref{lem:annuli_continuity} (Convergence of the Jacobian on annuli with possibly infinite radius), after a suitable reparametrization if necessary, we can cover uniformly $\mathbb C\setminus B_{r_n}$ with sectors so that property \eqref{eq:hard_jacobian_estimate} holds \footnote{Those sectors (up to a small modification that does not change the argument are the generalized triangles defined in Theorem \ref{lem:geometric_lemma})}, where $r_n$ is proportional (with a constant depending on $d,N$) to the size of the biggest zero of $J_{\gamma_n}$. After a suitable change of coordinates by Lemma \ref{lem:honest_zeros}, this is equivalent to consider $r_n$ to be of the size of the biggest zero of a component of $\gamma_n$.
		\item Zoom in to the polynomials $\gamma_n$ at scale $r_n$ to obtain the polynomials $\gamma'_n$. We have to show now that the theorem holds for $\gamma'_n$ on the unit ball. By passing to a subsequence, assume withoutloss of generality that the polynomials converge to a non-degenerate polynomial curve $\gamma'$. Note that the zeros of $J_{\gamma'}$ cannot all converge to the origin (because for each $\gamma'_n$ there is a zero with size $O(1)$). 
		\item By Lemma \ref{lem:annuli_continuity} again, we can find a sequence of annuli of outer radius $O(1)$, centered at the zeros of $\gamma'_n$ so that the condition \eqref{eq:hard_jacobian_estimate} holds after splitting the annuli into sectors.
		\item On the intersection of all the exteriors of all the annuli (by the exterior meaning the connected component containing infinity of the complement of the annuli), property \eqref{eq:hard_jacobian_estimate} holds for $n$ big enough after splitting into $O(1)$ sets, by compactness and Proposition \ref{prop:local_nondegenerate_jacobian} (Local version of \eqref{eq:hard_jacobian_estimate} in the non-degenerate case).
		\item Therefore, it suffices to prove that property \eqref{eq:hard_jacobian_estimate} holds in the interior component of the complement of the annuli. But this can be done by induction: Zoom in into each of those components (which, by hypothesis have lower degree than the original one), and repeat the argument.
	\end{itemize}
	\end{proof}

	\subsection{Injectivity of the $\Sigma$ map} 
	\label{sub:injectivity_of_the_}

	The goal of this section is the last part of Lemma \ref{lem:geometric_lemma}, which we re-state:

	\begin{lemma}
		For each triangle $T_j$ described in the proof of Lemma \ref{lem:geometric_lemma} there is a closed, zero-measure set $R_j \subseteq T_j^d$ so that the sum map $\Sigma(z):=\sum_{i=1}^d \gamma(z_i)$ is $O_N(1)$-to-one in $T_j^d\setminus R_j$.
	\end{lemma}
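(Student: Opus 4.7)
The plan is to exploit that $\Sigma$ is a polynomial map from $\mathbb C^d$ to itself whose Jacobian determinant is exactly $J_\gamma$, and then use Bezout to bound the cardinality of generic fibers. First, since $\partial_{z_i}\Sigma = \gamma'(z_i)$ as a column vector in $\mathbb C^d$, the Jacobian determinant of $\Sigma$ at $z=(z_1,\dots,z_d)$ equals $J_\gamma(z_1,\dots,z_d)$. The estimate \eqref{eq:hard_jacobian_estimate}, already established on $T_j^d$, then shows that $J_\gamma$ vanishes on $T_j^d$ only on the proper algebraic set
\[
Z_j := \{z \in T_j^d : v(z)=0\} \cup \bigcup_{i=1}^d \{z \in T_j^d : \Lambda_\gamma^{(d)}(z_i)=0\},
\]
so off $Z_j$ the map $\Sigma$ is a local biholomorphism.

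Next I bound the generic multiplicity of $\Sigma$. Each component of the system $\Sigma(z)=w$ is a polynomial in $z_1, \dots, z_d$ of total degree at most $N$, so by Bezout any zero-dimensional fiber $\Sigma^{-1}(w)$ has cardinality at most $N^d = O_N(1)$. Because $\Sigma$ is dominant on $T_j^d$ (its Jacobian is not identically zero there, by the previous paragraph), Chevalley's upper semicontinuity of fiber dimension identifies
\[
D := \{w \in \mathbb C^d : \dim \Sigma^{-1}(w) \geq 1\}
\]
as a proper Zariski-closed subset of $\mathbb C^d$; equivalently, $D$ can be cut out as the vanishing locus of the multipolynomial resultant of the system $\Sigma(z)-w$ with respect to $z$.

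Finally set $R_j := Z_j \cup \bigl(\Sigma^{-1}(D)\cap T_j^d\bigr)$. Both pieces are closed algebraic subvarieties of positive complex codimension in $T_j^d$, so $R_j$ is closed and Lebesgue-null. For any $z \in T_j^d \setminus R_j$ the fiber $\Sigma^{-1}(\Sigma(z))$ in $\mathbb C^d$ is finite of size at most $N^d$, and therefore so is its restriction to $T_j^d$, giving the required $O_N(1)$-to-one property. The main subtlety is the algebraic step identifying $D$ as a proper subvariety, which requires either Chevalley's semicontinuity theorem or an explicit resultant computation; the remaining pieces — the Jacobian identification, the Bezout bound, and the measure-zero assertion — are essentially immediate once \eqref{eq:hard_jacobian_estimate} is in hand.
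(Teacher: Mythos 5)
Your proof and the paper's take the same essential route: identify the Jacobian of $\Sigma$ with $J_\gamma$, use \eqref{eq:hard_jacobian_estimate} together with the nonvanishing of $\Lambda^{(d)}_\gamma$ on $T_j$ (which follows from \eqref{eq:power_geometric_estimates} and $b_j\notin T_j$) to conclude $J_\gamma$ is nonzero off the diagonal, and then invoke Bezout. The paper simply sets $R_j=\{v(z)=0\}\cap T_j^d$; for $z\notin R_j$ the nonvanishing Jacobian makes $z$ an \emph{isolated} point of the fiber $\Sigma^{-1}(\Sigma(z))$, and the refined form of Bezout bounds the number of isolated solutions of a degree-$N$ system by $N^d$ even in the presence of positive-dimensional components of the fiber. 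So your further excision of $\Sigma^{-1}(D)$ is not actually needed, and as written its justification has a small gap: Chevalley gives that $D$ is \emph{constructible}, not Zariski-closed, and what upper semicontinuity of fiber dimension makes closed is the source set $E=\{z:\dim_z\Sigma^{-1}(\Sigma(z))\ge 1\}$, which is in general a proper subset of $\Sigma^{-1}(D)$ since the positive-dimensional part of a fiber need not pass through $z$. If you wish to keep that step, replace $D$ by its Zariski closure $\overline D$ (still a proper subvariety because $\Sigma$ is dominant) so that $\Sigma^{-1}(\overline D)$ is closed and null. One more small remark: the $\bigcup_i\{\Lambda^{(d)}_\gamma(z_i)=0\}$ piece of your $Z_j$ is empty inside $T_j^d$, since \eqref{eq:power_geometric_estimates} with $b_j\notin T_j$ forces $\Lambda^{(d)}_\gamma\ne 0$ on $T_j$; the diagonal is the only genuine obstruction.
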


	\begin{proof}
		The measure zero set $R_j$ is the set of $z_1, \dots , z_d$ where there is $i\neq j$ such that $z_i=z_j$. The fact that $\Lambda_\gamma'(z_1, \dots, z_d)$ does not vanish in $T_j\setminus R_j$ (a consequence of \eqref{eq:hard_jacobian_estimate}) tells us that $(z_1, \dots, z_d)$ does not belong to an irreducible variety of dimension greater than zero of the variety defined by $\{(x_1, \dots, x_d) \in \mathbb C^d | \Lambda_\gamma'(x_1, \dots, x_d) = \Lambda_{\gamma'}(z_1, \dots, z_d)\}$. Therefore, the result follows by Bezout's theorem.
	\end{proof}
	
	\subsection{Reducing back to the real case}
	
	The proof of Theorem \ref{lem:geometric_lemma} for complex curves can actually be extended to real-valued curves. The only thing we need to ensure is that whenever we have to choose a boundary on the partition of $\mathbb C$, the boundary can be chosen to intersect $\mathbb R$ transversely. 
	
	In almost all the estimates above the precise open set to choose is not strictly determined by the algorithm, and one may modify slightly the boundaries so that they intersect $\mathbb C$ transversely. The only exception of \ref{lem:degenerate_convergence}, where we partition $\mathbb C^n$ into sectors that must meet at the origin (or the zero point). The fact that they must meet at the origin only imposes a $1-$dimensional constraint, and one may still choose the wedges ensuring that the boundary of no wedge is contained in $\mathbb R$.
	\newpage
	\section{Uniform restriction for polynomial curves} 
	\label{sec:the_analysis}
	This section outlines the modifications that must be done to the argument \cite{stovall_uniform_2016} to extend it to the complex case. The paper reduces the analytic result (whether the operator is bounded from a certain $L^p$ to a certain $L^q$) to a geometric result, previously proven by Dendrinos and Wright \cite{dendrinos_fourier_2010} in the real case, proven in Section \ref{sec:the_geometry} for the complex scenario.

	Since in this section we will only need to use $L_\gamma:=\Lambda^{(k)}_\gamma$ and $J_\gamma:=\Lambda_\gamma$, we will revert back to the $L_\gamma,J_\gamma$ notation in \cite{stovall_uniform_2016} for convenience. Note that $J_\gamma$ becomes the complex Jacobian (complex determinant of the Jacobian matrix), and that the Jacobian of the associated real transformation will then be $|J_{\gamma}|^2$.

	\subsection{Uniform Local Restriction} 
	\label{sub:uniform_local_restriction}
	
	The first step in the proof is the a local restriction result:

	\begin{thm}[Theorem 2.1 in {[Stovall]}, complex version]\label{thm:uniform_local_restriction}
	\label{lem:small_jacobian_dominates}
	 	Fix $d\ge 2$, $N$, and $(p,q)$ satisfying 
	 	\begin{equation}
	 		p'=\frac{d(d+1)} 2 q, q>\frac{d^2+d+2}{d^2+d}.
	 	\end{equation}
	 	Then, for every ball $B\subseteq \mathbb R^d$ and every degree $N$ polynomial $\gamma: \mathbb C \to \mathbb C^d$ satisfying
	 	\begin{equation}
	 		0<C_1 \le J_\gamma(z) \le C_2, \,\,\, \text{for all} z \in B
	 	\end{equation}
	 	we have the extension estimate
	 	\begin{equation}
	 		\|\mathcal E_\gamma(\chi_B f)\|_q \le C_{d,N,\log {\frac {C_2}{C_1}}} \|f\|_p
	 	\end{equation}
	 \end{thm}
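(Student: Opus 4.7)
The strategy is to adapt Stovall's proof of Theorem 2.1 in \cite{stovall_uniform_2016} to the complex setting. The hypothesis $C_1 \le L_\gamma(z) \le C_2$ makes $\gamma$ behave locally like a nondegenerate moment curve on $B$, so the classical Drury--Christ machinery for the extension operator applies almost verbatim. The only structural difference from the real case is that, viewing $\gamma: \mathbb{C} \to \mathbb{C}^d$ as a real map $\mathbb{R}^2 \to \mathbb{R}^{2d}$, the real Jacobian of the sum map $\Sigma(z_1,\ldots,z_d) = \sum_{i=1}^d \gamma(z_i)$ is $|J_\gamma(z)|^2$ rather than $|J_\gamma(z)|$. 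This doubling is precisely compensated by the factor $4/(d^2+d)$ (as opposed to the real $2/(d^2+d)$) in the definition of the complex affine arclength measure \eqref{eq:def_affine_measure}, which is why the exponent range on $(p,q)$ remains identical to that of Theorem \ref{thm:real_restriction}.

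Concretely, I would first expand $|\mathcal{E}_\gamma(\chi_B f)(x)|^d$ as a $d$-fold integral over $B^d$ with phase $e^{2\pi i \operatorname{Re}(\Sigma(z) \cdot x)}$ and weight $\prod_i f(z_i)|L_\gamma(z_i)|^{4/(d^2+d)}$. Using the $O_N(1)$-to-one-ness of $\Sigma$ away from the diagonal, established in Theorem \ref{lem:geometric_lemma}, I would change variables to the target $\mathbb{R}^{2d}$, producing a measure $\mu$ there whose density with respect to Lebesgue is, up to a bounded multiplicity factor, $\prod_i f(z_i) \prod_i |L_\gamma(z_i)|^{4/(d^2+d)} |J_\gamma(z)|^{-2}$. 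Hausdorff--Young, applicable directly for $q \ge 2$ and extended to the open range $q > (d^2+d+2)/(d^2+d)$ by interpolation against the trivial $L^1\to L^\infty$ bound, then estimates $\|\mathcal{E}_\gamma(\chi_B f)\|_{L^{dq}}^d = \|\hat\mu\|_{L^q}$ by $\|\mu\|_{L^{q'}}$. Finally, the complexified DW estimate \eqref{eq:hard_jacobian_estimate}, which under the standing hypothesis $L_\gamma \sim 1$ reduces to $|J_\gamma(z)| \gtrsim |v(z)|$, converts $\|\mu\|_{L^{q'}}$ into a Lebesgue integral of $\prod_i f(z_i)$ against the weight $|v|^{-(2q'-2)}$ on $B^d$; a weighted Hölder / Hardy--Littlewood--Sobolev argument on $\mathbb{R}^{2d}$ then yields the desired bound by $\|f\|_{L^p}^d$ with $p' = d(d+1)q/2$, the exponent matching coming from a scaling count identical to the real case.

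The hardest part will be the bookkeeping around the exceptional set and the dependence on $C_1, C_2$, rather than any conceptual step. One must verify that the multiplicity set $R_j$ from Theorem \ref{lem:geometric_lemma}, on which $\Sigma$ fails to be $O(1)$-to-one, contributes negligibly --- it is closed of measure zero, so a standard approximation argument suffices --- and that the logarithmic dependence on $\log(C_2/C_1)$ emerges naturally from an outer dyadic decomposition of $B$ into sub-balls on which $L_\gamma$ varies by a bounded factor, the contributions summing up logarithmically. Since these are the same technicalities that appear in \cite{stovall_uniform_2016}, I expect no fundamentally new ideas to be required beyond carefully tracking the extra factor of $2$ coming from the complex-to-real Jacobian conversion through each step.
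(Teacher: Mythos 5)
Your approach is genuinely different from the paper's. The paper follows Stovall/Drury: it proves the theorem by iterative bootstrap, using offspring curves $\gamma_h$, Plancherel in the offspring parameter $h$, and successive interpolation to creep toward the sharp exponent (the ``Bootstrapping for the extension operator'' lemma in Section \ref{sub:uniform_local_restriction}). You instead propose the one-shot Christ-style argument: raise $\mathcal E_\gamma f$ to the $d$th power, push the $d$-fold product measure forward under $\Sigma$, apply Hausdorff--Young, and then control the resulting integral by multilinear fractional integration. Both routes are classical for the moment curve; yours is shorter in outline, while the paper's avoids any reference to global injectivity of $\Sigma$ in this part of the proof. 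Note that the ``weighted H\"older / HLS'' step you wave at must in fact be the multilinear Christ inequality of Lemma \ref{lem:Christ's lemma} (bilinear HLS is not enough for $d\geq 3$), applied in the interior of its exponent polytope --- the endpoint $q = (d^2+d+2)/(d^2+d)$ falls on the boundary, which is why only the open range is claimed.

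There is, however, a real gap. You invoke the geometric estimate \eqref{eq:hard_jacobian_estimate} and the $O_N(1)$-to-one-ness of $\Sigma$ as if they hold on $B^d$ whenever $C_1 \le |L_\gamma| \le C_2$ on $B$. They do not. The hypothesis controls the single-variable torsion $L_\gamma(z) = \Lambda^{(d)}_\gamma(z)$, but the multi-variable Jacobian $J_\gamma(z_1,\dots,z_d)$ can still vanish for distinct points of $B$, in which case both \eqref{eq:hard_jacobian_estimate} and the change-of-variables step fail. The paper's own example $\gamma(z)=(z,2z^3-3z^2)$ illustrates this: $\Lambda^{(2)}_\gamma(z)=12z-6$ is bounded away from zero near $z=0$ and $z=1$, yet $J_\gamma(z_1,z_2)=6(z_2-z_1)(z_1+z_2-1)$ vanishes at $(0,1)$, so $|J_\gamma|\gtrsim|v|$ fails on any ball containing both points. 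Estimate \eqref{eq:hard_jacobian_estimate} and the finite multiplicity of $\Sigma$ are established only on the triangles $T_j$ coming from Theorem \ref{lem:geometric_lemma}, not on arbitrary balls on which $|L_\gamma|$ is pinched. To repair this you would need the perturbative localization that Stovall proves as her Lemma 2.3 (reproduced in Section \ref{sub:uniform_local_restriction} here as the offspring-curve lemma): after rotating so the coefficients are $O(1)$, one can find a radius $\delta(\epsilon,N,d)$ such that on $\delta$-balls the curve is close enough to a moment curve that both \eqref{eq:hard_jacobian_estimate} and injectivity of $\Sigma$ hold by continuity, and $B$ can then be covered by $O(1)$ such balls. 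This localization is not ``bookkeeping'' --- it is the step that actually supplies the inputs to the change of variables --- and it needs to be stated and used explicitly in your argument rather than borrowed implicitly from Theorem \ref{lem:geometric_lemma}, which concerns a different decomposition.
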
 

	An important preliminary fact, proven in \cite[Lemma 2.2]{stovall_uniform_2016} is that whenever $J_\gamma \lesssim 1$ on a open set $\Omega$, there is a change of co-ordinates $L\in SU(d;\mathbb C)$ so that all the coefficients in $L\circ J_\gamma$ are $O_{N,d,\Omega}(1)$. The proof is done through a compactness argument, and transfers without any significant modification to the complex case for convex domains. The exact formulation that we will use is:

	\begin{lemma}
		Fix $N,d \ge 2$, $\epsilon>0$. Then for any polynomial curve $\gamma$ and triangle $T$ with $B_\epsilon \subset T\subset  B(0,1)$  such that $|J_\gamma(T)|\subseteq [1/2, 2]$ there exists a transformation $A \in SU(n;\mathbb C)$ so that $\|A\gamma\|_{C^N(K)} \lesssim_\epsilon 1$.
	\end{lemma}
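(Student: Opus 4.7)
The proof is by compactness and contradiction, paralleling the structure of Lemma 2.2 in \cite{stovall_uniform_2016}. Suppose the statement fails: there exist sequences of polynomial curves $\gamma_n$ of degree at most $N$ and triangles $T_n$ with $B_\epsilon \subseteq T_n \subseteq B(0,1)$ and $|J_{\gamma_n}(z)| \in [1/2, 2]$ on $T_n$, such that $M_n := \inf_{A \in SU(d)} \|A\gamma_n\|_{C^N(K)} \to \infty$. Pick near-optimizers $A_n \in SU(d)$ with $\|A_n\gamma_n\|_{C^N} \le 2 M_n$ and set $\tilde\gamma_n := M_n^{-1} A_n \gamma_n$. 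Then $\|\tilde\gamma_n\|_{C^N} \le 2$, and the near-optimality of $A_n$ supplies the crucial lower bound $\|B\tilde\gamma_n\|_{C^N} \ge 1$ for every $B \in SU(d)$.

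Next I extract limits. Polynomial curves of degree $\le N$ with uniformly bounded $C^N$-norm form a bounded subset of a finite-dimensional vector space, so on a subsequence $\tilde\gamma_n \to \tilde\gamma$ with $\|\tilde\gamma\|_{C^N} \le 2$ and $\|B\tilde\gamma\|_{C^N} \ge 1$ for all $B \in SU(d)$. By Blaschke selection applied to closed subsets of $\overline{B(0,1)}$, pass to a further subsequence so that $T_n \to T_\infty$ in the Hausdorff metric, with $B_\epsilon \subseteq T_\infty$.

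The decisive step is to compute $J_{\tilde\gamma}$. Since $\det A_n = 1$ and $J_\gamma$ is $d$-homogeneous in $\gamma$, one has $J_{\tilde\gamma_n}(z) = M_n^{-d} J_{\gamma_n}(z)$, and hence $|J_{\tilde\gamma_n}| \le 2/M_n^d \to 0$ uniformly on $T_n$. Thus $J_{\tilde\gamma}$ vanishes on $T_\infty$; since $T_\infty \supseteq B_\epsilon$ has non-empty interior and $J_{\tilde\gamma}$ is a polynomial, $J_{\tilde\gamma} \equiv 0$ on all of $\mathbb{C}$. Equivalently, the image of $\tilde\gamma$ is contained in a proper affine subspace $V \subsetneq \mathbb{C}^d$.

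The contradiction now follows from this degeneracy. Choose $B \in SU(d)$ sending a unit normal to the linear direction of $V$ to $e_d$; then $(B\tilde\gamma)_d$ is identically constant, of modulus bounded by $\|\tilde\gamma\|_{C^0} \le 2$, but one can arrange it to be small by a preliminary translation of the codomain, which does not change $J$ or derivative norms. Iterating on the ``shadow'' curve in $e_d^\perp$ using the $SU(d-1)$-stabilizer, and exhausting the degeneracy through at most $d-1$ such reductions, produces $B$ with $\|B\tilde\gamma\|_{C^N} < 1$, contradicting the lower bound. The main obstacle is precisely this final quantitative step: showing rigorously that the $SU(d)$-orbit of a degenerate limit strictly achieves a smaller coordinate-wise $C^N$-norm than the normalization value $1$. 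This is the complex analog of the linear algebra reduction in Stovall's argument and requires careful bookkeeping of the coordinate norms under the iterated $SU(k)$ rotations; the alternative cleaner route is to incorporate codomain translations from the outset and reduce the affine subspace to a linear one, so that one coordinate of $B\tilde\gamma$ is identically zero rather than merely constant.
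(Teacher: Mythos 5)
Your compactness setup --- normalize by $M_n := \inf_A \|A\gamma_n\|_{C^N}$, pass to a convergent subsequence of degree-$\le N$ curves, and observe that the torsion of the limit vanishes on an open set --- is exactly the right shape. The paper instead rescales the domain so that the triangle $T$ contains the unit ball and then cites Stovall's Lemma 2.2 verbatim, which is the same compactness argument; handling the varying triangles via Blaschke selection rather than a rescaling is a legitimate variant, so up to this point the two routes are essentially the same.

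The genuine gap is in the final contradiction, and you yourself half-flag it. Once the degenerate limit $\tilde\gamma$ lies in a hyperplane, rotating the normal to $e_d$ and translating makes $(B\tilde\gamma)_d \equiv 0$, but the other coordinates of $B\tilde\gamma$ are only bounded by $2$, not small; so you do not obtain $\|B\tilde\gamma\|_{C^N} < 1$, nor can you ``iterate on the shadow curve'' (the shadow need not itself be degenerate, so there is nothing to iterate). In fact no proof can close this step as the lemma is printed: the group must be $SL(d;\mathbb C)$, not $SU$. With $A$ unitary the statement is \emph{false}: take $\gamma(z) = (z, z^2 + Mz)$ in $d=2$, so $J_\gamma \equiv 2$ is in range, yet the coefficient vector of $z$ has Euclidean norm $\sqrt{1+M^2}$, which unitaries preserve, so $\|A\gamma\|_{C^N(B_1)} \gtrsim M$ for every $A \in SU(2)$. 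Once the group is $SL(d;\mathbb C)$ (as in Stovall), the contradiction is immediate and the translation issue you worry about is a red herring: after $(B_0\tilde\gamma)_d \equiv 0$, apply $B_t = \operatorname{diag}(t,\dots,t,t^{1-d}) \in SL(d;\mathbb C)$ and let $t \to 0$; every nonzero coordinate shrinks, so $\inf_{B\in SL(d)} \|B\tilde\gamma\|_{C^N} = 0$, contradicting the lower bound $\ge 1$ that passes to the limit. Your proposal should be rewritten with $SL$ throughout (noting that near-optimizers $A_n$ still make sense even though $SL$ is non-compact, since only the lower bound $\|B\tilde\gamma_n\| \ge 1$ is needed) and with this scaling step in place of the attempted shadow-curve induction.
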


	\begin{proof}
		Define $\gamma_\epsilon = \epsilon^{\frac{d^2+d}{2}}\gamma(\epsilon^{-1} z)$. Now $\gamma_\epsilon$ has the property that $|J_{ \gamma_\epsilon}(B(0,1))|\subseteq [1/2, 2]$. This reduces the problem to the situation in Lemma 2.2 in \cite{stovall_uniform_2016} (that is, the case where, instead of a triangle $T$ as a domain, we have a ball. The proof therein transfers to the complex case without modifications.
	\end{proof}

	The second preliminary is a statement about offspring curves. Given $\mbf h:= (\mbf h_1, \dots, \mbf h_k)$, the \emph{offspring curve }$\gamma_h(z)$ is defined as $\gamma_h := \frac 1 K \sum_{i=1}^K \gamma(z+h_i)$. They become relevant in order to estimate convolutions. Lemma 2.3 in \cite{stovall_uniform_2016} states:

	\begin{lemma}
	Fix $N,d \ge 2$ and $\epsilon>0$. There exists a constant $c_d > 0$ and a radius $\delta:=\delta(\epsilon,N,d)$ so that for any triangle $B_\epsilon(0) \subseteq T \subseteq B_1$ the following conclusion holds for any polynomial curve $\gamma$ satisfying 

	\begin{equation}
	\label{eq:stovall_offspring_hybound}
		|J_\gamma(T)|\subseteq [1/2, 2]:
	\end{equation}

	for any ball $B$ of radius $\delta$ centered at a point in T, and any $\mbf h:= (\mbf h_1, \dots, \mbf h_k) \in \mathbb C^k$, the curve $\gamma_h$ satisfies the following inequalities on the set $\tilde B_h = \bigcap_{i=1}^k (B - h_j)$:

	\begin{equation}
		\label{eq:stovall_ineq_1}
		|J_{\gamma_h}(\mbf z)| \gtrsim_{N,d,\epsilon} |v( \mbf z)| \prod_{i=1}^d |\Lambda^{(d)}_{\gamma_h}( \mbf z_i)|, \text{ for any }\mbf z \in \tilde B^d_h
	\end{equation}

	\begin{equation}
		\label{eq:stovall_ineq_2}
		|L_{\gamma_h}(z)| \approx_{N,d,\epsilon} 1 \text{ for any } z \in \tilde B_h
	\end{equation}
	in particular, one can cover any such $T$ by $O_{N,d,\epsilon}(1)$ open sets so that the conclusions \eqref{eq:stovall_ineq_1} and \eqref{eq:stovall_ineq_2} hold for any polynomial that follows \eqref{eq:stovall_offspring_hybound}.
	\end{lemma}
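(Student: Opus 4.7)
The plan is to reduce the problem to a small-perturbation setting via the preceding normalization lemma, and then transfer the conclusions of Theorem \ref{lem:geometric_lemma} applied to $\gamma$ itself to the offspring $\gamma_h$ by continuity. First, by the preceding lemma, after composing with a suitable $A\in SU(d;\mathbb C)$ I may assume $\|\gamma\|_{C^N(T)} \lesssim_\epsilon 1$. Observe that whenever $\tilde B_h$ is nonempty, picking any $z_0\in\tilde B_h$ forces both $z_0$ and each $z_0+h_i$ to lie in $B$, so $|h_i|\le 2\delta$ for every $i$. Writing $\gamma(z+h_i)-\gamma(z) = h_i\int_0^1 \gamma'(z+t h_i)\,dt$ and summing over $i$ gives, for $z\in\tilde B_h$,
\begin{equation*}
  \gamma_h^{(m)}(z) = \gamma^{(m)}(z) + O_{N,d}(\delta), \qquad 0\le m\le N,
\end{equation*}
so each canonical-basis coefficient of $L_{\gamma_h}(z)$ and of $\tilde\Lambda_{\gamma_h}(\mathbf z)$ differs from the corresponding coefficient of $L_\gamma(z)$ or $\tilde\Lambda_\gamma(\mathbf z)$ by $O(\delta)$, uniformly on $B^d$.

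Estimate \eqref{eq:stovall_ineq_2} is then immediate: $|L_\gamma|\approx 1$ on $T\supseteq\tilde B_h$ by hypothesis, and choosing $\delta$ small enough gives $|L_{\gamma_h}|\approx 1$ on $\tilde B_h$. For estimate \eqref{eq:stovall_ineq_1} I would apply Theorem \ref{lem:geometric_lemma} directly to $\gamma$ rather than to $\gamma_h$: this decomposes $\mathbb C$ into $O_N(1)$ generalized triangles on each of which $|\tilde\Lambda_\gamma(\mathbf z)|\gtrsim_N \prod_{i=1}^d |L_\gamma(z_i)|^{1/d}\approx 1$. By Lemma \ref{lem:nondegenerate_continuity}, $\tilde\Lambda_\gamma(\mathbf z)$ is continuous in $(\mathbf z,\gamma)$ on $\mathbb C^d\times P_N(\mathbb C)^d$, so combined with the perturbation estimate above this yields $|J_{\gamma_h}(\mathbf z)/v(\mathbf z)|\gtrsim 1$ on the same triangles once $\delta$ is small. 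Together with \eqref{eq:stovall_ineq_2} this gives exactly \eqref{eq:stovall_ineq_1}.

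The main obstacle is making the covering of $T$ uniform in $\gamma$, since the triangles produced by Theorem \ref{lem:geometric_lemma} a priori depend on $\gamma$. To handle this I would use a compactness argument: the set $\mathcal F$ of polynomial curves $\gamma$ of degree $\le N$ with $\|\gamma\|_{C^N(T)}\lesssim_\epsilon 1$ and $|L_\gamma|\in[\tfrac 12,2]$ on $T$ is compact in the $C^N(T)$ topology, and for each $\gamma_0\in\mathcal F$ the decomposition constructed above for $\gamma_0$ persists, after a small shrinkage of the triangles, for every $\gamma$ in a suitable $C^N$-neighborhood $U_{\gamma_0}$ of $\gamma_0$ and every $|h_i|\le\delta$, by the uniform continuity of $\tilde\Lambda$ on compact sets. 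Extracting a finite subcover $\{U_{\gamma_j}\}_{j=1}^J$ of $\mathcal F$ and taking a common refinement produces a decomposition of $T$ into $O_{N,d,\epsilon}(1)$ open sets on which \eqref{eq:stovall_ineq_1}--\eqref{eq:stovall_ineq_2} hold uniformly over every $\gamma$ satisfying \eqref{eq:stovall_offspring_hybound} and every admissible $h$; any ball $B$ of radius $\delta$ contained in one of these sets then satisfies the conclusions of the lemma.
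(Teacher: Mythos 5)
Your overall strategy --- normalization via the preceding lemma, then a perturbation-plus-compactness argument --- is in the same spirit as the paper's proof, which simply reduces to balls and then defers to the proof of Stovall's Lemma~2.3 (itself a compactness argument). Your treatment of~\eqref{eq:stovall_ineq_2} and the observation that $|h_i|\le 2\delta$ whenever $\tilde B_h\neq\emptyset$, leading to $\gamma_h$ being a coefficient-wise $O(\delta)$ perturbation of $\gamma$, are correct and are the key reductions.

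However, there is a genuine gap in the way you handle~\eqref{eq:stovall_ineq_1}. Invoking Theorem~\ref{lem:geometric_lemma} in full generality is the wrong tool here. That theorem decomposes $\mathbb C$ into generalized triangles whose boundaries include angular cuts emanating from zeros of $L_\gamma$; these cuts can perfectly well pass through $T$ (for instance, when $L_\gamma$ vanishes just outside $T$), and shrinking the triangles, as you propose in the compactness step, only makes this worse. The conclusion you actually reach is ``any ball $B$ of radius $\delta$ \emph{contained in one of these sets} satisfies the conclusions,'' which is strictly weaker than the lemma's assertion, namely that \eqref{eq:stovall_ineq_1}--\eqref{eq:stovall_ineq_2} hold for \emph{every} ball of radius $\delta$ centered at a point of $T$. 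A $\delta$-ball straddling one of the angular cuts is not covered by your argument: for $\mathbf z\in B^d$ with coordinates on opposite sides of the cut, Theorem~\ref{lem:geometric_lemma} gives no lower bound on $|\tilde\Lambda_\gamma(\mathbf z)|$, and the common refinement you take across the finite subcover does not resolve this, since it still partitions $T$ into open pieces with boundary.

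The fix is to exploit that the hypothesis $|J_\gamma(T)|\subseteq[1/2,2]$ puts you in the \emph{non-degenerate} regime on all of $T$ (and, for $\delta$ small, on $T+B_{2\delta}$), so no triangular decomposition is needed at all: Proposition~\ref{prop:local_nondegenerate_jacobian} gives, for each $\gamma$ and each $z_0$, a full ball $B_{\epsilon(\gamma)}(z_0)$ on which $|\tilde\Lambda_\gamma|\gtrsim 1$. Compactness of the normalized family $\mathcal F$ (and of $\overline{T+B_{2\delta}}$) then makes $\epsilon(\gamma)$ uniform, and one takes $\delta$ a small multiple of that uniform radius. Combined with your perturbation estimate transferring the bound from $\gamma$ to $\gamma_h$, this yields~\eqref{eq:stovall_ineq_1} on every $\delta$-ball, as required; the covering of $T$ by $O_{N,d,\epsilon}(1)$ such balls is then immediate.
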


	\begin{proof}
		First note, that by choosing $\delta(\epsilon, n,d)$ small enough and Lemma \ref{lem:small_jacobian_dominates}, we have that $$ |J_\gamma(T + B_\delta)|\subseteq [1/4, 4].$$ Now we can cover $T$ by balls of radius $\delta$ and it suffices to prove the Lemma for balls of radius $\delta$. From here on, the result follows as the proof in Lemma 2.3 in \cite{stovall_uniform_2016}, changing the intervals appearing therein for balls.
	\end{proof}

	Using those preliminaries, we can proceed to the proof of Theorem \ref{thm:uniform_local_restriction}. The proof follows exactly as in \cite{stovall_uniform_2016}, which is in itself a sketch of the proof in \cite{drury_restrictions_1985}. The following lemma is a simple computation, that takes the role of the equivalent necessary result in the real case.

	\begin{lemma}
	\label{lem:weak_lp_vandermonde}
		Let $0\le c<\frac 2 d$, then the function $|v(0, z_1, \dots z_{d-1})|^{-2 c}$ belongs to $L^{q}_{B_1^{d-1}(0;\mathbb C)}$ for $1\le q<\frac{2}{d c}$.
	\end{lemma}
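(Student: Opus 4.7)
The plan is to set $\alpha := 2cq$, so that the hypothesis becomes $0 \le \alpha < 4/d$, and to show
\[
I := \int_{B_1^{d-1}(0;\mathbb C)} |v(0, z_1, \dots, z_{d-1})|^{-\alpha}\, dA(z_1)\cdots dA(z_{d-1}) < \infty.
\]
Note that
\[
v(0, z_1, \dots, z_{d-1}) = \prod_{i=1}^{d-1} z_i \cdot \prod_{1 \le i < j \le d-1}(z_i - z_j)
\]
is a product of $\binom{d}{2}$ complex-linear factors, hence homogeneous of degree $d(d-1)/2$ in $(z_1,\dots,z_{d-1})$, while the real volume element is homogeneous of degree $2(d-1)$. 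The strategy is to use this homogeneity twice: globally to reduce to a bounded ``annular'' piece, and then locally at each coincidence stratum to establish integrability.

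First I would perform a dyadic decomposition in $s(\mathbf z) := \max_i |z_i|$. Writing $B_1^{d-1} = \bigsqcup_{k \ge 0} \Omega_k$ with $\Omega_k := \{z \in B_1^{d-1} : 2^{-k-1} \le s(z) \le 2^{-k}\}$ and substituting $z_i = 2^{-k} w_i$ on each shell, one obtains
\[
I = \left(\int_{\Omega_0} |v(0, w)|^{-\alpha}\, dA(w)\right) \sum_{k=0}^\infty 2^{k(d-1)(\alpha d/2 - 2)},
\]
where $\Omega_0 := \{w \in B_1^{d-1}(0;\mathbb C) : \tfrac 1 2 \le \max_i |w_i| \le 1\}$. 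The hypothesis $\alpha < 4/d$ is exactly what makes the exponent $(d-1)(\alpha d/2 - 2)$ strictly negative, so the geometric series converges and the problem reduces to showing $I_0 := \int_{\Omega_0} |v(0,w)|^{-\alpha}\, dA(w) < \infty$.

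On the compact region $\Omega_0$ the integrand is singular along the complex hyperplanes $\{w_i = 0\}$ and $\{w_i = w_j\}$. A single factor $|L|^{-\alpha}$ is locally integrable on $\mathbb C^{d-1} \cong \mathbb R^{2(d-1)}$ exactly when $\alpha < 2$, which is implied by $\alpha < 4/d \le 2$ for $d \ge 2$. To handle the simultaneous vanishing of several factors, I would induct on $d$ and introduce a second dyadic decomposition of $\Omega_0$ based on the minimum pairwise distance $\min_{0 \le i < j \le d-1}|z_i' - z_j'|$, with the convention $z_0' := 0$. At each scale one identifies a partition of $\{0, 1, \dots, d-1\}$ into coincidence clusters and rescales each cluster separately; a cluster of size $n$ at scale $\rho$ contributes a factor $\rho^{(n-1)(2 - \alpha n/2)}$, and the hypothesis $\alpha < 4/d \le 4/n$ (valid for any $n \le d$) makes this factor small. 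Summing the resulting geometric series over scales, and using the inductive hypothesis inside each cluster, closes the argument.

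The main obstacle will be this second step: carrying out the coincidence-stratum induction without losing sharpness, because a naive application of H\"older wastes exponent. The reassuring point is that the scaling computation of the first step already fixes the sharp threshold $\alpha < 4/d$, and this is also exactly the borderline for local integrability at the worst coincidence stratum (where all $d$ indices collapse into a single point); at every smaller cluster ($n < d$), the required condition $\alpha < 4/n$ is strictly weaker, so the induction proceeds without ever straining the hypothesis. The remaining difficulty is purely combinatorial: ensuring that when the clusters are chosen according to the pairwise-distance decomposition, each singular factor of $v(0, z)$ is accounted for in exactly one cluster.
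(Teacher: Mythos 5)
The paper itself offers no argument here: it simply asserts the lemma as ``a simple computation.'' So there is nothing to compare your proposal against word-for-word; what matters is whether your route is sound. It is, in outline. Setting $\alpha := 2cq$, you need $\int_{B_1^{d-1}}|v(0,\mathbf z)|^{-\alpha}\,dA < \infty$ for $\alpha < 4/d$. Your scaling computation is exactly right: $v(0,\cdot)$ is homogeneous of degree $\binom{d}{2} = d(d-1)/2$, the real volume element on $\mathbb C^{d-1}$ scales with exponent $2(d-1)$, and on the shell $\Omega_k$ the substitution $z = 2^{-k}w$ gives the factor $2^{k(d-1)(\alpha d/2 - 2)}$, so the geometric series over $k$ converges precisely when $\alpha < 4/d$, which is the sharp threshold of the lemma and comes entirely from this global scaling at the origin.

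Where you are slightly off in your accounting is the discussion of the worst coincidence stratum. On $\Omega_0$ one has $\max_i|w_i| \in [1/2,1]$ and $w_0=0$, so the stratum ``all $d$ indices collapse'' never occurs there --- it is exactly the origin, which the outer dyadic sum already handled. Consequently on $\Omega_0$ every coincidence cluster has size $n \le d-1$, and the required condition $\alpha < 4/n$ is strictly weaker than the hypothesis. This means your second step is genuinely less delicate than you fear, and the ``second dyadic decomposition in $\min|z_i'-z_j'|$'' is not needed. The cleaner way to close it is: $\overline{\Omega_0}$ is compact, so cover it by finitely many balls $B(w^*,\delta)$; at $w^*$ define the coincidence partition $i \sim j \iff w_i^*=w_j^*$ (with $w_0^*:=0$); for $\delta$ small the off-cluster factors of $v$ are bounded below on $B(w^*,\delta)$, and the integral over each cluster of size $n$ reduces (after centering, and after dropping the bounded center coordinate when $0\notin$ cluster) to $\int_{B_\delta^{\,n-1}}|v(0,\mathbf u)|^{-\alpha}\,dA(\mathbf u)$, which is the same lemma in dimension $n < d$. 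Inducting on $d$, with base case $d=2$ (where the integrand is $|z_1|^{-\alpha}$, integrable on $B_1(\mathbb C)$ for $\alpha < 2$), finishes the proof. So your proposal is correct in its essentials; the reservations you raise about the combinatorics are resolved by compactness plus induction rather than a second multiscale decomposition, and the near-sharpness you worry about does not arise because the sharp constraint is spent once, at the global scaling step.
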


	Other than this difference (and the factors of $2$ that appear at the exponent everywhere where the Jacobian appears, that lead to the factor of $2$ in Lemma \ref{lem:weak_lp_vandermonde}), the proof of Lemma 2.4 in \cite{stovall_uniform_2016} applies \textit{mutatis mutandis} to the complex case. We will provide a sketch for completeness:

	\begin{lemma} [Bootsrapping for the extension operator]
		Let $1\le p_0<\frac{d^2+d+2}{2}$, and assume that there is a constant $C_{d,p}$ such that:

		\begin{equation}
			\|\mathcal E_{\gamma_h} (\chi_{I_h}f)\|_{\frac{d(d+1)}2p_0'} \le C_{d,p_0} \|f\|_{L^p_0(\lambda_\gamma)}
			\text{ for all } K>1\text{, }h\in \mathbb C^k
		\end{equation}
		then, for all $p$ satisfying $p^{-1} >\frac{2}{d(d+2)} + \frac{d-2}{d(d+2)} p_0^{-1}$ there exists a $C'(d,p)$ so that

		\begin{equation}
			\|\mathcal E_{\gamma_h} (\chi_{I_h}f)\|_{\frac{d(d+1)}2p_0'} \le C_{d,p_0} \|f\|_{L^{p_0}(\lambda_\gamma)}
			\text{ for all } K>1\text{, }h\in \mathbb C^k
		\end{equation}
	\end{lemma}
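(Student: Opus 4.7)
The plan is to follow the Drury--Stovall bootstrap scheme line by line, treating the complex setting as a real one with the extra factor of $2$ arising from $\mathbb C \cong \mathbb R^2$. The starting point is to express a high power $|\mathcal E_\gamma f|^{2K}$ as the Fourier transform of a $2K$-fold convolution of the measure $f\, d\lambda_\gamma$ with itself; grouping the $2K$ copies into $K$ shifted pairs produces, after an affine change of variables, an extension operator $\mathcal E_{\gamma_h}$ evaluated on a tensor-product function indexed by a parameter $\mathbf h = (h_1,\dots,h_K) \in \mathbb C^K$. Symmetrizing and localizing to the set $\tilde B_{\mathbf h}$ supplied by the offspring-curve lemma reduces matters to estimating an outer $L^{q_0'}$ norm in $\mathbf h$ of quantities of the form $\|\mathcal E_{\gamma_{\mathbf h}}(\chi_{\tilde B_{\mathbf h}} g_{\mathbf h})\|_{L^{q_0'}}$, where $g_{\mathbf h}$ is a product of translates of $f$ weighted by appropriate powers of $L_\gamma$.

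Applying the hypothesis to each offspring curve $\gamma_{\mathbf h}$ bounds the inner norm by $C_{d,p_0}\|g_{\mathbf h}\|_{L^{p_0}(\lambda_{\gamma_{\mathbf h}})}$, and the Jacobian inequality \eqref{eq:stovall_ineq_1} then converts $d\lambda_{\gamma_{\mathbf h}} = L_{\gamma_{\mathbf h}}^{4/(d(d+1))}|dz|$ into a Vandermonde-weighted product of $L_\gamma$ factors evaluated at the individual translates. At this stage the integrand is explicit in $\mathbf h$, and Minkowski's inequality exchanges the $\mathbf h$-integration with the $L^p$-integration in the $f$-variables. What remains is an integral against $|v(0,z_1,\dots,z_{d-1})|^{-2c}$ for a specific $c$ determined by how $p_0$, $p$, and the Jacobian exponent interact; Lemma \ref{lem:weak_lp_vandermonde} controls this integral precisely when $c < \tfrac{2}{d}$, which upon rewriting is exactly the stated condition $\tfrac{1}{p} > \tfrac{2}{d(d+2)} + \tfrac{d-2}{d(d+2)}\tfrac{1}{p_0}$.

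The main obstacle, and where the complex case diverges visibly from the real one, is keeping the exponent bookkeeping consistent under the factor-of-two correction: the real Jacobian of the change of variables $y = \Sigma(\mathbf z)$ is $|J_\gamma(\mathbf z)|^2$, the affine measure carries exponent $\tfrac{4}{d^2+d}$ rather than $\tfrac{2}{d^2+d}$, and the weak-type threshold for the Vandermonde weight is $\tfrac{2}{dc}$ rather than $\tfrac{1}{dc}$. Provided these factors of two are propagated consistently through the Drury scheme, no new inequality is required beyond those already assembled in Section \ref{sec:the_geometry} and the preceding two preliminaries of this subsection; the bootstrap condition emerges from balancing the exponent of the Vandermonde weight against the exponent of $f$ in the iterated estimate, exactly mirroring the arithmetic in \cite[Lemma 2.4]{stovall_uniform_2016} but with every real Jacobian exponent doubled.
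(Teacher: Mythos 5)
Your high-level scaffolding --- offspring curves, the Jacobian inequality \eqref{eq:stovall_ineq_1}, the Vandermonde weight, Lemma \ref{lem:weak_lp_vandermonde}, and the systematic factor-of-two bookkeeping from $\mathbb C \cong \mathbb R^2$ --- matches the paper. But two mechanical pieces of Drury's bootstrap, and hence of the paper's proof, are misdescribed in a way that would break the argument.

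First, the convolution is $d$-fold, not $2K$-fold. The paper sets $g(\xi) := \mu \ast \cdots \ast \mu(d\cdot\xi)$ ($d$ copies), with $\mu = \gamma_*(f\,\lambda_\gamma)$, and then reparametrizes $(t_1,\dots,t_d)$ by a barycenter $t$ together with shift variables $h' = (h_2,\dots,h_d)$, so that the inner $t$-integral is exactly an extension operator along the offspring curve $\gamma_{(0,h')}$ and the outer integral is over $h' \in \mathbb C^{d-1}$. ``Grouping $2K$ copies into $K$ shifted pairs'' is a different reorganization and does not produce a full-rank change of variables $\Sigma:\mathbb C^d \to \mathbb C^d$; that full-rank structure (giving the factor $|J_{\tilde\gamma}|^2$ in the complex case) is exactly why the fold number must be $d$.

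Second, the bootstrap gain does not come from Minkowski. After the change of variables one controls $\|\hat g\|_q$ by the mixed norm $\|G\|_{L^a_{h'}(L^b_t;|v(0,h')|^2)}$ via \emph{two} endpoint estimates: Plancherel in $t$ gives $(a,b)=(2,q)$, and the inductive hypothesis applied to each offspring curve (plus H\"older on the bounded set $\tilde B_h$) gives $(a,b)=(1,p)$ with $q = \frac{d(d+1)}{2}p$. Interpolating between these, then controlling the resulting weighted mixed norm with Lemma \ref{lem:weak_lp_vandermonde}, and finally pushing $(a^{-1},b^{-1})$ toward $\bigl(\tfrac{d}{d+2},\ \tfrac{d}{d+2}+\tfrac1{p_0}\tfrac{d+2}{d-2}\bigr)$ is what yields the improved threshold $p^{-1} > \frac{2}{d(d+2)} + \frac{d-2}{d(d+2)}p_0^{-1}$. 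If you skip the Plancherel endpoint and the interpolation and use only the inductive $L^1_{h'}$ estimate (whether exchanged by Minkowski or not), you do not gain on $p_0$: you merely recover the hypothesis. The $L^2_{h'}$ endpoint is indispensable, and its absence in your sketch is a genuine gap.
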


\begin{proof}[Sketch of the proof, following \cite{drury_restrictions_1985}]
	The strategy of the proof is to use the convolution-product relationship of the Fourier transform. Proving the theorem for all offspring curves simultaneously will allow us to split the convolution as an integral over offspring curves, with more terms.

	Let $\tilde \gamma := \gamma_h$ be an offspring curve.	
	Since by hypothesis $L_\gamma \approx 1$, we can neglect the factor of $|L_\gamma|^{\frac{4}{d^2+d}}$ in $\lambda_{\gamma}$, and replace $\|f\|_{L^{p_0}(\lambda_\gamma)}$ with the unweighted $\|f\|_{L^{p_0}}$ . Moreover since $J_\gamma(z_1,\dots z_d) \approx v(z_1, \dots z_d) $, we will exchange them freely.

	If we define $g(\xi) := \mu \ast \dots _{d \text{ times}} \dots \ast  \mu (d\cdot \xi)$, a change of variables computation shows:

	\begin{equation}
		g\left(\frac 1 d  \sum_{i=1}^d\tilde \gamma(t_d) \right ) = \frac{c_d}{|J_{\tilde \gamma}(t_1,\dots t_d)|^2} f(t_1) \dots f(t_d).
	\end{equation}
	This motivates the definition (in order to use the offspring curve hypothesis) for $h=(0,h') \in \{0\} \times C^{d-1}$:

	\begin{equation}
		G(t;h):= g\left(\frac 1 d \sum_{i=1}^d\gamma(t+h_i) \right).
	\end{equation}

	We can write now

	\begin{equation}
		\hat g(x) = C_d \int_{\mathbb C^{d-1}} \int_{\tilde B_{(0,h')}} e^{ix \tilde\gamma_h(t)} g(\tilde \gamma_h(t)) |J_{\tilde \gamma}(t+h_1,  \dots t+h_d)|^2 dt dh'
	\end{equation}

	From here, by Plancherel we obtain:

	\begin{equation}
		\|\hat g\|_q \lesssim \|G\|_{L^2_{h'}}(L^q_t;|v(0, h')|^2)
	\end{equation}
	and by the induction hypothesis (and Hölder's inequality - using that $|\tilde B_1|\lesssim 1$) we get:

	\begin{equation}
		\|\hat g \|_q \lesssim \|G\|_{L^1_{h'}(L^p_t;|v(0,h')|^2)}, \,\,\, 1\le p \le p_0, q = \frac{d(d+1)}2 p.
	\end{equation}
	Interpolating between these results, we obtain:

	\begin{equation}
		\|\hat g\|_q \lesssim \|G\|_{L^a_{h'}(L^b_t;|v(0,h')|^2)}
	\end{equation}
	for $(a^{-1},b^{-1})$ in the triangle with vertices $(1/2,1/2), (1,1, (1,p_0^{-1}))$. Therefore it suffices to bound:

	\begin{equation}
		\|G\|_{L^a_{h'}(L^b_t;|v(0,h')|^2)} \sim 
		\left[
			\int d h'
			|v(h)|^{2a-2}
			\left(
			f(t+h_1)
			\right)^{a/b}
		\right]^{1/a}
	\end{equation}
	where we used that $v(t+h_1, \dots t+h_d) = v(h_1,\dots h_d)$ to take the $|v(h)|^{-2}$ term from the inner to the outer integral.  Now, by Lemma \ref{lem:weak_lp_vandermonde}, $|v(h)|^{2(a-1)} \in L^{q}_{h'}$ for $q < \frac{2}{d(a-1)}$ whenever $0< a-1< \frac 2 d$. Interpolating and using Holder, one can see that:

	\begin{equation}
		\|G\|_{L^a_{h'}(L^b_t;|v(0,h')|^2)} \lesssim \|f\|_{L^p_t}^d
	\end{equation}

	whenever $a < b <\frac{2a}{d+2 -da}$, and  $\frac d p < \frac {(d+2)(d-1)} {2} \frac 1 a + \frac 1 b - \frac{d(d-1)}2$.

	Choosing $(a^{-1}, b^{-1})$ arbitrarly close to $\left(\frac{d}{d+2},\frac{d}{d+2}+\frac{1}{p_0} \frac{d+2}{d-2}\right)$ and computing the resulting exponents finishes the proof.

\end{proof}


	\subsection{Almost orthogonality} 
	\label{sub:almost_orthogonality}

	The main result in this section is:

	\begin{lemma}
		Let $\gamma:\mathbb C\to \mathbb C^d$ be a complex polynomial curve, and let $T_j$ be one of the sets in Lemma \ref{lem:geometric_lemma}. For $n\in \mathbb  Z$ define the dyadic partition
		\begin{equation}
			T_{j,n} = \{z \in T_j, |z_j - b_j| \sim 2^n\}
		\end{equation}
		Then for each $(p,q)$ satisfying $q= \frac{d(d+1)}{2}p'$ and $\infty > q > \frac{d^2+d+2}{2}$ and $f \in L^p(d\lambda_\gamma)$ we have:
		\begin{equation}
			\|\mathcal E_\gamma (\chi {T_j} f)\|_{L^q(\mathbb R^{2d})} \lesssim 
			\left\|
			\left(
				\sum_n |\mathcal E_\gamma (\chi {T_{j,n}} f)|^2
			\right)^{1/2}
			\right\|_{L^q(\mathbb R^{2d})}
		\end{equation}
	\end{lemma}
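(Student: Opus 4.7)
The plan is to establish this as a Littlewood-Paley square-function estimate, exploiting the fact that on each dyadic piece $T_{j,n}$ the extension $\mathcal{E}_\gamma(\chi_{T_{j,n}} f)$ is essentially frequency-localized to an anisotropic dyadic shell, and that these shells have bounded overlap as $n$ varies. Note that $\mathcal{E}_\gamma(\chi_{T_j} f) = \sum_n \mathcal{E}_\gamma(\chi_{T_{j,n}} f)$ since the $T_{j,n}$ partition $T_j$, so the content is precisely to replace a naked sum by a square function at the $L^q$ level.

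First I would convert the uniform pointwise estimates of Lemma \ref{lem:geometric_lemma} on $T_{j,n}$, namely $|L_\gamma| \sim A_j 2^{nk_j}$ and $|\gamma_1'| \sim B_j 2^{nl_j}$ together with the analogous scaling of each $\Lambda_\gamma^{(k)}$, into an exact rescaling: translate by $b_j$, dilate the domain by $2^n$, and dilate the range by a diagonal linear map $D_n$ with entries chosen to normalize the torsion. This produces a fixed curve $\tilde\gamma$ on a fixed generalized triangle $\tilde T$, independent of $n$, with $|L_{\tilde\gamma}| \sim 1$. Covariance of the affine arclength measure under this coordinate change yields the scaling identity
\begin{equation}
\mathcal{E}_\gamma(\chi_{T_{j,n}} f)(\xi) = c_n\, e^{i\xi \cdot \gamma(b_j)} \mathcal{E}_{\tilde\gamma}(\chi_{\tilde T} f_n)(D_n^{T} \xi)
\end{equation}
for a suitable rescaling $f_n$ of the datum and a dimensional constant $c_n$. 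Since $\widehat{\mathcal{E}_\gamma(\chi_{T_{j,n}} f)}$ is the measure $\chi_{T_{j,n}} f\, d\lambda_\gamma$ supported on $\gamma(T_{j,n})$, this identity places its support inside an anisotropic shell $D_n \cdot K$ for a fixed compact $K \subset \mathbb{C}^d$. The strict monotonicity of the scaling exponents along $D_n$ (inherited from the ordering of the monomial exponents defining the moment-curve model around $b_j$) ensures that the shells $\{D_n K\}_n$ overlap with bounded multiplicity.

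With this anisotropic frequency localization in hand, the proof concludes by choosing a smooth Fourier-side partition of unity $\{\eta_n\}$ adapted to the shells $\{D_n K\}$ and invoking the vector-valued Littlewood-Paley inequality. Since $q > (d^2+d+2)/2 \geq 4$, we have $q \geq 2$, and the standard anisotropic square-function estimate gives
\begin{equation}
\Big\|\sum_n \mathcal{E}_\gamma(\chi_{T_{j,n}} f)\Big\|_{L^q} \lesssim \Big\|\Big(\sum_n |\mathcal{E}_\gamma(\chi_{T_{j,n}} f)|^2\Big)^{1/2}\Big\|_{L^q},
\end{equation}
which is the desired bound. The main obstacle I expect is in setting up the anisotropic Littlewood-Paley framework: the shells $D_n K$ are eccentric parallelepipeds whose aspect ratios depend on the exponents $k_j, l_j$ from the geometric lemma, and so the underlying Fefferman-Stein vector-valued maximal inequality must be run in this anisotropic setting. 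This is the same structural difficulty present in the real case of \cite{stovall_uniform_2016}; the only complex-specific changes are that the $T_{j,n}$ are annular sectors in $\mathbb{C}$ rather than dyadic intervals and that the ambient space is $\mathbb{R}^{2d}$, neither of which disturbs the frequency-side analysis, so the argument of \cite{stovall_uniform_2016} transfers once the scaling setup above is in place.
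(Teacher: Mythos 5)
Your core idea — localize the frequency support of each block and invoke a square-function estimate for $q\geq 2$ — is the right one, but you take a genuinely more complicated route than the one the paper intends. The cited Stovall argument localizes \emph{only in the first coordinate}: the estimate $|\gamma_1'(z)|\sim B_j|z-b_j|^{l_j}$ from \eqref{eq:power_geometric_estimates} integrates to $|\gamma_1(z)-\gamma_1(b_j)|\sim 2^{n(l_j+1)}$ on $T_{j,n}$, so the Fourier transform of $\mathcal E_\gamma(\chi_{T_{j,n}}f)$, projected to the $\xi_1\in\mathbb C\sim\mathbb R^2$ plane, lives in a lacunary annulus centered at $\gamma_1(b_j)$; the classical two-dimensional Littlewood--Paley theorem applied in the $\xi_1$ variable alone (tensored with the identity in the remaining coordinates) then gives the claimed bound, with no anisotropy in sight. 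By contrast, you localize in all $d$ complex coordinates simultaneously and then need an anisotropic square-function estimate over eccentric shells $D_nK$ whose aspect ratios grow geometrically in $n$ — a real analytic overhead that you flag as ``the main obstacle,'' but which is entirely self-imposed, and which is \emph{not} present in Stovall's real argument as you suggest. Two further points: (i) the ``exact rescaling'' producing an $n$-independent model curve $\tilde\gamma$ on a fixed triangle is false for a general polynomial — the rescaled curves $D_n^{-1}\gamma(2^n\cdot+b_j)$ depend on $n$ and are only \emph{comparable} to a fixed generalized moment curve; this happens to be harmless for shell containment, but the identity you write down is incorrect as stated. (ii) Note also that the bounded-overlap claim for your shells $D_nK$ already reduces, upon inspection, to the separation of the first-coordinate annuli, so your argument secretly contains the one-coordinate reduction; making that reduction explicit from the start eliminates the anisotropic machinery entirely.
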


	\begin{proof}
		The proof is a standard Littlewood-Paley argument, using property \eqref{eq:power_geometric_estimates} to localize the support of the Littlewood-Paley blocks, see  \cite[Section 3]{stovall_uniform_2016}.
	\end{proof}

	The goal is now to show that we can sum the pieces in the Littlewood-Paley decomposition above, following  \cite[Lemma 4.1]{stovall_uniform_2016}. The main step to show the summability is to show that terms with different frequencies interact weakly. The following lemma is essentially equivalent to Lemma 4.1 in \cite{stovall_uniform_2016} (more precisely, it is equivalent to equation (4.5) there). Once the lemma is proven, Theorem \ref{thm:restriction} follows by (the proof of) Lemma 4.1 in \cite{stovall_uniform_2016}.
	\begin{lemma}
		There exists $\epsilon(N,d,p)>0$ such that, if $n_1 \le \dots \le n_d$, with $n_d-n_1 > 2d$ and $f_i$ is Schwartz and supported in $T_{j,n_i}$ we have:

		\begin{equation}
			\left \|\prod_{i=1}^d \mathcal E_\gamma [f_i]\right\|_{L^{d+1}} \lesssim 2^{n_D-n_1} \prod_{i=1}^d \|f_i\|_{L^2(d\lambda)}.
		\end{equation}
	\end{lemma}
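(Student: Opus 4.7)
The plan is to interpolate between an $L^2$ estimate derived from Plancherel's theorem and the complex Dendrinos--Wright inequality \eqref{eq:hard_jacobian_estimate}, and the trivial $L^\infty$ bound from the triangle inequality, via the H\"older inequality $\|F\|_{L^{d+1}}^{d+1} \le \|F\|_{L^\infty}^{d-1}\|F\|_{L^2}^{2}$, where $F := \prod_{i=1}^d \mathcal{E}_\gamma[f_i]$.

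For the $L^2$ ingredient, I would apply Plancherel to write $\|F\|_{L^2}^2 = \|f_1 d\lambda_\gamma \ast \cdots \ast f_d d\lambda_\gamma\|_{L^2}^2$. This $d$-fold convolution is the pushforward under the sum map $\Sigma(\mathbf z) = \sum_i \gamma(z_i)$ of the product measure $\prod_i f_i(z_i)|L_\gamma(z_i)|^{4/(d(d+1))}\,dz_i$, and by Lemma \ref{lem:geometric_lemma} the map $\Sigma$ is $O_N(1)$-to-one on $T_j^d$ off a null set, with real Jacobian $|J_\gamma|^2$ satisfying \eqref{eq:hard_jacobian_estimate}. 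Change of variables to the $\mathbf z$-coordinates together with the lower bound $|J_\gamma|^2 \gtrsim |v|^2 \prod_i |L_\gamma(z_i)|^{2/d}$ then yield
$$\|F\|_{L^2}^2 \lesssim_N \int_{\prod_i T_{j,n_i}} \frac{\prod_i |f_i(z_i)|^2 |L_\gamma(z_i)|^{8/(d(d+1))-2/d}}{|v(\mathbf z)|^2}\,d\mathbf z.$$
The $L^\infty$ bound is one application of Cauchy--Schwarz in the measure $d\lambda_\gamma$: $\|F\|_{L^\infty} \le \prod_i \|f_i\|_{L^1(d\lambda_\gamma)} \le \prod_i \lambda_\gamma(T_{j,n_i})^{1/2}\, \|f_i\|_{L^2(d\lambda_\gamma)}$.

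The third step is to feed the geometric information from \eqref{eq:power_geometric_estimates} into both bounds. On $T_{j,n_i}$ one has $|L_\gamma(z_i)|\sim A_j 2^{n_i k_j}$, which controls both $\lambda_\gamma(T_{j,n_i})$ and the affine weight implicit in $\|f_i\|_{L^2(d\lambda_\gamma)}$, and on the product region with $n_1<\cdots<n_d$ the Vandermonde factor satisfies $|v(\mathbf z)|^2 \sim \prod_j 2^{2(j-1)n_j}$. Inserting these into the H\"older inequality above and comparing exponents of $A_j$, $2^{n_ik_j}$ and $2^{n_i}$ shows that both the $A_j$ and the $k_j$ dependencies drop out by straightforward scaling identities, and that the net exponent of $2$ becomes $\tfrac{1}{d+1}\sum_{j=1}^d (d+1-2j)\,n_j$. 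A summation by parts bounds this by $-\tfrac{d-1}{d+1}(n_d-n_1)$, giving the estimate with $\epsilon = (d-1)/(d+1)$.

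The main technical obstacle I anticipate is the bookkeeping of exponents in this final step. The complex setting differs from Stovall's real case by several factors of two: the affine weight in $d\lambda_\gamma$ carries exponent $4/(d(d+1))$ instead of $2/(d(d+1))$, and the real Jacobian of $\Sigma$ is $|J_\gamma|^2$ rather than $|J_\gamma|$. These factors must conspire so that the $A_j$ and $k_j$ dependencies vanish and only a clean, scaling-invariant gain in $n_d-n_1$ remains; the computation sketched above does produce such a gain, but carrying it out requires some care. Once the gain is established, summability over $n_1\le\cdots\le n_d$ proceeds exactly as in \cite[Lemma 4.1]{stovall_uniform_2016}, and Theorem \ref{thm:restriction} follows.
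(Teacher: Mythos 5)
Your strategy of interpolating between an $L^\infty$ bound and an $L^2$ bound via Plancherel has a genuine gap: the $L^2$ estimate you need is generically infinite. After the change of variables, you arrive at
\begin{equation*}
\|F\|_{L^2}^2 \lesssim \int_{\prod_i T_{j,n_i}} \prod_i |f_i(z_i)|^2\,|L_\gamma(z_i)|^{8/(d^2+d)-2/d}\,|v(\mathbf z)|^{-2}\,d\mathbf z,
\end{equation*}
and in $\mathbb C \sim \mathbb R^2$ the factor $|z_i-z_j|^{-2}$ is \emph{not} locally integrable: $\int_{|w|<1}|w|^{-2}\,dw$ diverges logarithmically. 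The hypothesis only gives $n_1 \le \dots \le n_d$ with $n_d - n_1 > 2d$, so it is entirely possible that, say, $n_1 = n_2$; in that case $z_1$ and $z_2$ range over the same dyadic shell and the integral over $z_1 - z_2$ near zero is genuinely divergent. Your claim that $|v(\mathbf z)|^2 \sim \prod_j 2^{2(j-1)n_j}$ implicitly assumes the $n_j$ are pairwise separated by at least $2$, which the hypotheses do not guarantee; without that separation the Vandermonde can be arbitrarily small within the allowed region.

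The paper circumvents exactly this issue by \emph{not} passing through the $L^2$ endpoint. It applies Hausdorff--Young to bound $\|F\|_{L^{d+1}}$ directly by $\|\widehat F\|_{L^{(d+1)/d}}$, so that after the change of variables the Vandermonde appears to the power $-2/d$ (once the norm is raised to the $(d+1)/d$ power), which is locally integrable in two real dimensions. The remaining near-diagonal difficulty is then handled by a pigeonhole selection of a single index $k$ with a large gap $n_{k+1} - n_k \ge (n_d-n_1)/d$, a factorization of the Vandermonde across this gap, and Christ's weak-type multilinear lemma (Lemma \ref{lem:Christ's lemma}) applied to each of the two blocks. To repair your argument you would either need to split into cases according to which $n_i$ coincide (which reintroduces the combinatorics the pigeonhole step was designed to sidestep), or abandon the $L^2$--$L^\infty$ interpolation and estimate the $L^{(d+1)/d}$ norm of $\widehat F$ directly as the paper does.
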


	\begin{proof}	By Hausdorff-Young,
	\begin{equation}
		\left \|\prod_{i=1}^d \mathcal E_\gamma [f_i]\right\|_{L^{d+1}} \le \| d\mu_1 \ast \dots d\mu_d\|_{L^{\frac{d+1}d}}
	\end{equation}
	where $d\mu_i := \gamma_*(f(t) \lambda_\gamma(t) dt)$, and, if we define $\Phi(\mbf t) := \sum_{i=1}^d \gamma(t_i)$,
	\begin{align*}
		 [d\mu_1 \ast \dots \ast d\mu_d](\phi) 
		 =&
		 \int_{\mathbb C^d} \phi\left( \sum_{i=1}^d \gamma(t_i) \right)\prod_{i=1}^d f_i(t_i )\lambda_\gamma(t_i) d\mbf t
		 \\=&
		 \Phi_*\left[ \prod_{i=1}^df_i(t_i )\lambda_\gamma(t_i) d \mbf t \right ](\phi)
	\end{align*}

	If $\Phi$ was a one-to-one map (as we know it is in the real case) the change of variables rule would give a direct relationship between the $L^p$ norm of the density associated to $d\mu_1 \ast \dots \ast d\mu_d$ and weighted $L^p$ norm of $\prod_{i=1}^df_i(t_i )\lambda_\gamma(t_i)$. Whenever the map is $O(1)$-to-$1$, the following lemma serves as an alternative:

\begin{lemma}
\label{lem:Multivariable_Calc}
	Let $\Omega \subset \mathbb R^k$ be an open set, and $\Phi:\Omega \to \mathbb R^k$ a smooth map, with non-vanishing Jacobian $J_\Phi$ in $\Omega$. Assume that $\Phi$ is at most $n$-to-1 (i.e., each point has at most $n$ pre-images). Let $f:\Omega\to\mathbb R^d$ be a test function, and let $\Phi_\ast f := \frac{d (\Phi_\ast f d \mu_{\text{Leb}(\mathbb R^k)})}{d \mu_{\text{Leb}(\mathbb R^k)}}$ be the measure pushforward of $f$. Then, 

	\begin{equation}
	\label{eq:multivariable_eq1}
		[\Phi_* f](x) = \sum_{\xi: \Phi(\xi)=x} f(\xi) |J_\Phi(\xi)|^{-1}, \text{ and }
	\end{equation}
	\begin{equation}
	\label{eq:multivariable_eq2}
		\| [\Phi_* f](x)\|_{L^p(dx)} \lesssim_n \| f |J_\phi(\xi)|^{1/p-1}\|_{L^p(dy)}
	\end{equation}.
\end{lemma}
\begin{proof}
	The claim \eqref{eq:multivariable_eq1} is a simple calculation in local coordinates, and the second claim \eqref{eq:multivariable_eq2} is a direct consequence of \eqref{eq:multivariable_eq1}.
\end{proof}

This lemma implies the estimate:

\begin{align}
	\left \|\prod_{i=1}^d \mathcal E_\gamma [f_i]\right\|_{L^{d+1}} \lesssim &
	\left  \|
		\prod_{i=1}^d f_i(z_i) \lambda_\gamma(t_i) J_\gamma^{-\frac{2}{d+1}}(\mbf z)
	\right \|_{L^{\frac{d+1}d}(\mbf z)}
	\\ \lesssim &
	\label{eq:variable_change_estimate}
	\left  \|
		\prod_{i=1}^d f_i(z_i) \lambda_\gamma(t_i)^{1/2} v(\mbf z)^{-\frac{2}{d+1}}
	\right \|_{L^{\frac{d+1}d}(\mbf z)}
\end{align}
For $l$ smooth functions $g_1, \dots g_l$ define \footnote{Note that this definition differs from the definition given in \cite{stovall_uniform_2016} by a $\frac{d+1}d$ exponent. The definition given in \cite{stovall_uniform_2016} is $T_{\text{Stovall}}(g_1, \dots g_l) := \|\prod_{i=1}^l f_i(z_i) \lambda_\gamma(t_i)^{1/2} v(z_1, \dots z_l)^{-\frac{2}{d+1}}\|_{L^{{\frac{d+1}d}}(\mathbb C^l)}^{\frac{d+1}d}$.} $$T(g_1, \dots g_l) := \|\prod_{i=1}^l f_i(z_i) \lambda_\gamma(t_i)^{1/2} v(z_1, \dots z_l)^{-\frac{2}{d+1}}\|_{L^{{\frac{d+1}d}}(\mathbb C^l)}.$$ By the pigeonhole principle there is an index $k$ such that $n_{k+1}-n_i \ge \frac{n_d-n_1}d$, and in particular, $n_{k+1}-n_i\ge 2$. In that case:

\begin{equation}
	|v(\mbf z)| \sim \prod_{1\le i<j\le k} |t_i-t_j|  \prod_{1\le i\le k<j} 2^{n_j} \prod_{1\le i\le k<j} |t_i-t_j|
\end{equation}
since all the coupling between variables in \eqref{eq:variable_change_estimate} comes from the Vandermonde determinant, this allows us to split the norm in \eqref{eq:variable_change_estimate} as a product of two norms

\begin{equation}
	T(f_1, \dots , f_d)
	\lesssim
	2^{-\frac{2k}{d+1} \sum_{i>k} n_i} T(f_1, \dots f_k) T(f_{k+1}, \dots f_d).
\end{equation}

In order to control this terms, we use a lemma by Christ. The original statement [CHRIST] is for functions on the real domain, but the argument transfers without change to the complex domain. We provide a sketch of the proof for completeness:

	\begin{lemma}
		\label{lem:Christ's lemma}
	Let $f_i,g_{i,j}$ be test functions, then:
	\begin{equation}
		\int_{\mathbb C^l \sim \mathbb R^{2l}} \prod_{1\le i\le l}f_i(z_i) \prod_{1\le i<j\le l} g_{i,j}(z_i-z_j) dz \lesssim \prod_{i=1}^l \|f_i\|_p \prod_{1\le i<j\le l} \|g_{i,j}\|_{q,\infty}
	\end{equation} 
	whenever $ 2 l =  2 l p^{-1} + \frac{2 l (l-1)}{2} q^{-1}$ and $p^{-1}>l$.

	\end{lemma}
	
	\begin{proof} [Proof of Lemma \ref{lem:Christ's lemma}]

	The proof starts by studying the set of powers $p_i, q_{i,j}$ with $\sum_{i} p_i^{-1} + \sum_{i,j} q_i^{-1} = l$ for which the estimate

	\begin{equation} 
		\int_{\mathbb C^l \sim \mathbb R^{2l}} \prod_{1\le i\le l}f_i(z_i) \prod_{1\le i<j\le l} g_{i,j}(z_i-z_j) dz \lesssim \prod_{i=1}^l \|f_i\|_{p_i} \prod_{1\le i<j\le l} \|g_{i,j}\|_{q_{i,j}}
	\end{equation} 
	holds. We will denote by capital letters the vectors $(p_1^{-1}, \dots p_l^{-1}, q_{1,2}^{-1}, \dots q_{l-1,l}^{-1})$, which we will think of as elements of the affine subspace $H:=\{\sum_{i} p_i^{-1} + \sum_{i,j} q_i^{-1} = l\}$.

	The base cases are $A:=(p_{i}^{-1}=1, q_{i,j}^{-1} = 0)$ and $B:=(p_{i}^{-1}=\delta_{i,1}, q_{i,j}^{-1} = \delta_{i+1,j})$, which follow from Fubini's theorem. Now, the result is invariant over permutations over all the indices $(i,j)$. This allows us to extend the second base case $B$ to all the cases $B_\sigma$ permutations obtained from $B$ by permutations.

	By Riesz-Thorin, the result is then true for $A':=\frac 1 {l!} \sum_{\sigma \in S_l} B_\sigma = (p_i = \frac 1 l, q_i = \frac 1 {2l})$. By Riesz-Thorin again, the result is true for all the points interpolating $A$ and $A'$. This proves the strong version of the theorem.

	To get the weak case, it suffices to show that all the points joining $A$ and $A'$ lie on the interior of the interpolation polytope (interior with the affine topology on $H$). By convexity again, it suffices to show that $A'$ does. The geometric argument can be seen in \cite{christ_restriction_1985} (it is exactly the same as in the real case). 
	\end{proof}

Now by Lemma \ref{lem:Christ's lemma} (see the appendix) and H\"older's inequality (using our control on the size of the supports of $f_i$) we can bound

\begin{equation}
	T(f_1, \dots f_k) \lesssim \prod_{i=1}^k \|f_i\|_{L^{\frac{2d+2}{2d-k+1}}} \lesssim \prod_{i=1}^k 2^{n_i \frac{d-k}{d+1}} \|f_i\|_{2}
\end{equation}
and

\begin{equation}
	T(f_{k+1}, \dots f_d) \lesssim \prod_{i=k+1}^d \|f_i\|_{L^{\frac{2d+2}{d+k+1}}} \lesssim \prod_{i=k+1}^d 2^{n_i \frac{k}{d+1}} \|f_i\|_{2}.
\end{equation}
Joining the estimates above, and using the hypothesis that $n_{k+1}-n_k \ge \frac 1 d (n_d-n_1)$, we get:

\begin{align}
	\left \|\prod_{i=1}^d \mathcal E_\gamma [f_i]\right\|_{L^{d+1}}
		\lesssim&
	2^{\frac{d-k}{d+1} (n_1+ \dots n_k - n_{k+1}- \dots n_d)}\prod_{i=1}^d \|f\|_2 
		\\\lesssim &
	2^{\epsilon_d (n_d-n_1)} \prod_{i=1}^d \|f\|_2 
\end{align}

which finishes the proof.

\end{proof}



	\newpage

	\bibliographystyle{alpha}
	\bibliography{Bibliography}

\end{document}